\documentclass[12pt,a4paper,reqno]{article}
\usepackage{amsfonts}
\usepackage{amsmath}
\usepackage{amssymb}
\usepackage{amscd}
\usepackage[latin2]{inputenc}
\usepackage{t1enc}
\usepackage[mathscr]{eucal}
\usepackage{indentfirst}
\usepackage{graphicx}
\usepackage{graphics}
\usepackage{pict2e}
\usepackage{epic}
\numberwithin{equation}{section}
\usepackage[margin=1.5cm]{geometry}
\usepackage{epstopdf} 
\usepackage[dvipsnames]{xcolor}
\usepackage{enumerate}
\usepackage{stmaryrd}

\usepackage{mathtools}

\usepackage[LGR,T1]{fontenc}
\DeclareSymbolFont{upgreek}{LGR}{cmr}{m}{n}
\DeclareMathSymbol{\deltaup}{\mathord}{upgreek}{`d}
\DeclareMathSymbol{\piup}{\mathord}{upgreek}{`p}
\DeclareMathSymbol{\epsilonup}{\mathord}{upgreek}{`e}

\newcommand{\huelle}{\operatorname{span}}
\newcommand{\soc}{\operatorname{soc}}

\DeclareMathSymbol{\omegaup}{\mathord}{upgreek}{`w}

\makeatletter
\newcommand*{\shifttext}[2]{%
	\settowidth{\@tempdima}{#2}%
	\makebox[\@tempdima]{\hspace*{#1}#2}%
}
\makeatother

\usepackage{amssymb,amsfonts,amsmath,bbm,mathrsfs,stmaryrd}
\usepackage{soul,xcolor}
\usepackage{url}
\usepackage{ytableau}

\usepackage[colorlinks,
linkcolor=black!75!red,
citecolor=blue,
pdftitle={Root-Reductive},
pdfauthor={Thanasin Nampaisarn},
pdfproducer={pdfLaTeX},
pdfpagemode=None,
bookmarksopen=true
bookmarksnumbered=true]{hyperref}


\usepackage{xr}

\usepackage{tikz,tikz-cd}
\usetikzlibrary{matrix,arrows,calc,decorations.pathreplacing,decorations.markings,intersections,shapes.geometric,through,fit,shapes.symbols,positioning,decorations.pathmorphing,snakes}
\everymath{\displaystyle}

%
%
%
%
%
%
%
%

\usepackage[amsmath,thmmarks,hyperref]{ntheorem}
\usepackage{cleveref}

\crefname{section}{Section}{Sections}
\crefformat{section}{#2Section~#1#3} 
\Crefformat{section}{#2Section~#1#3} 

\crefname{subsection}{\S}{\S\S}
\crefformat{subsection}{#2\S#1#3} 
\Crefformat{subsection}{#2\S#1#3}

\makeatletter
\renewtheoremstyle{plain}
{\item[\hskip\labelsep \theorem@headerfont ##1\ ##2\theorem@separator]}%
{\item[\hskip\labelsep \theorem@headerfont ##1\ ##2\ {\normalfont(##3)}\theorem@separator]}
\makeatother
\theoremstyle{plain}

\newtheorem{lem}{Lemma}[section]
\newtheorem{prop}[lem]{Proposition}

\newtheorem{cor}[lem]{Corollary}
\newtheorem{thm}[lem]{Theorem}

\newtheorem{conj}[lem]{Conjecture}

\newtheorem{openq}[lem]{Open Question}

\newtheorem{qf'bis}{\Cref{le.quot_filt'} bis}

\theoremstyle{plain}
\theorembodyfont{\upshape}
\theoremsymbol{\ensuremath{\blacklozenge}}

\newtheorem{define}[lem]{Definition}

\newtheorem{exm}[lem]{Example}
\newtheorem{rmk}[lem]{Remark}

\crefname{definition}{definition}{definitions}
\crefformat{definition}{#2definition~#1#3} 
\Crefformat{definition}{#2Definition~#1#3} 

\crefname{ex}{example}{examples}
\crefformat{example}{#2example~#1#3} 
\Crefformat{example}{#2Example~#1#3} 

\crefname{remark}{remark}{remarks}
\crefformat{remark}{#2remark~#1#3} 
\Crefformat{remark}{#2Remark~#1#3} 

\crefname{convention}{convention}{conventions}
\crefformat{convention}{#2convention~#1#3} 
\Crefformat{convention}{#2Convention~#1#3}

\crefname{lemma}{lemma}{lemmas}
\crefformat{lemma}{#2lemma~#1#3} 
\Crefformat{lemma}{#2Lemma~#1#3} 

\crefname{proposition}{proposition}{propositions}
\crefformat{proposition}{#2proposition~#1#3} 
\Crefformat{proposition}{#2Proposition~#1#3} 

\crefname{corollary}{corollary}{corollaries}
\crefformat{corollary}{#2corollary~#1#3} 
\Crefformat{corollary}{#2Corollary~#1#3} 

\crefname{theorem}{theorem}{theorems}
\crefformat{theorem}{#2theorem~#1#3} 
\Crefformat{theorem}{#2Theorem~#1#3} 

\crefname{assumption}{assumption}{Assumptions}
\crefformat{assumption}{#2assumption~#1#3} 
\Crefformat{assumption}{#2Assumption~#1#3} 

\crefname{equation}{}{}
\crefformat{equation}{(#2#1#3)} 
\Crefformat{equation}{(#2#1#3)}

\theoremstyle{nonumberplain}
\theoremsymbol{\ensuremath{\blacksquare}}

\newtheorem{proof}{Proof}

\newtheorem{proof-of-uptri}{Proof of \Cref{pr.up-tri}}
\newtheorem{proof-of-simples}{Proof of \Cref{pr.simples}}
\newtheorem{proof-of-locnoe}{Proof of \Cref{pr.loc-noe}}
\newtheorem{proof-of-exts}{Proof of \Cref{th.exts}}
\newtheorem{proof-of-univ}{Proof of \Cref{th.univ}}
\newtheorem{proof-of-univ-gen}{Proof of \Cref{th.univ-gen}}



\usepackage{graphics}
\usepackage{epsfig}
\usepackage{epstopdf}
\usepackage{slashed}
\usepackage{enumerate}
\usepackage{scalerel}
\usepackage{epic}
\usepackage{indentfirst}
\usepackage{verbatim}

\usepackage{etoolbox}

\usepackage[T1]{fontenc}
\usepackage{calligra}
\usepackage{xcolor}
\makeatletter
\let\ps@plainorig\ps@plain
\makeatother

\newcommand\swabfamily{\usefont{U}{yswab}{m}{n}}
\DeclareTextFontCommand{\textswab}{\swabfamily}
\newcommand\frakfamily{\usefont{U}{yfrak}{m}{n}}
\DeclareTextFontCommand{\textfrak}{\frakfamily}
\newcommand\gothfamily{\usefont{U}{ygoth}{m}{n}}
\DeclareTextFontCommand{\textgoth}{\gothfamily}

\DeclareMathOperator{\Ext}{Ext}

\DeclareMathOperator{\Hom}{Hom}

\DeclareMathOperator{\im}{im}

\DeclareMathOperator{\ch}{ch}
\DeclareMathOperator{\Res}{Res}

\DeclareFontFamily{U}{mathb}{\hyphenchar\font45}
\DeclareFontShape{U}{mathb}{m}{n}{
	<5> <6> <7> <8> <9> <10> gen * mathb
	<10.95> mathb 10 <12> <14.4> <17.28> <20.74> <24.88> mathb12
}{}
\DeclareSymbolFont{mathb}{U}{mathb}{m}{n}
\DeclareMathSymbol{\precneq}{3}{mathb}{"AC}

\DeclareSymbolFont{euletters}{U}{eur}{m}{n}
\SetSymbolFont{euletters}{bold}{U}{eur}{b}{n}
\DeclareMathSymbol{\varp}{\mathalpha}{euletters}{"7D}

\newcommand{\bggO}{\pazocal{O}}
\newcommand{\bbggO}{{\bar{\pazocal{O}}}}

\newcommand{\defeq}{\overset{\mathrm{def}}{=\!=}}

\newcommand{\amsbb}[1]{\mathbb{#1}}

\newcommand{\gfrak}{\mathfrak{g}}
\newcommand{\hfrak}{\mathfrak{h}}

\newcommand{\bfrak}{\mathfrak{b}}
\newcommand{\nfrak}{\mathfrak{n}}
\newcommand{\pfrak}{\mathfrak{p}}

\newcommand{\Ulie}{\textup{\swabfamily U}}

\newcommand{\Llie}{\textup{\swabfamily L}}
\newcommand{\Plie}{\textup{\swabfamily P}}

\newcommand{\Ilie}{\textup{\swabfamily I}}

\newcommand{\pr}{\textup{pr}}

\usepackage{calrsfs}
\DeclareMathAlphabet{\pazocal}{OMS}{zplm}{m}{n}


\title{Categories $\bggO$ for Root-Reductive Lie Algebras: II. Translation Functors and Tilting Modules}

\author{Thanasin Nampaisarn}

\begin{document}

	\setstcolor{red}
	
	\date{}

	\newcommand{\Addresses}{{
			\bigskip
			\footnotesize
			
			\textsc{Ben-Gurion University of the Negev, Department of Mathematics}\par\nopagebreak
			\textit{E-mail address}: \texttt{namphais@post.bgu.ac.il}
	}}

	\maketitle

	\begin{abstract}
		This is the second paper of a series of papers on a version of categories $\bggO$ for root-reductive Lie algebras.  Let $\gfrak$ be a root-reductive Lie algebra over an algebraically closed field $\amsbb{K}$ of characteristic $0$ with a splitting Borel subalgebra $\bfrak$ containing a splitting maximal toral subalgebra $\hfrak$.  For some pairs of blocks $\bbggO[\lambda]$ and $\bbggO[\mu]$, the subcategories whose objects have finite length are equivalence via functors obtained by the direct limits of translation functors.  Tilting objects can also be defined in $\bbggO$.  There are also universal tilting objects $D(\lambda)$ in parallel to the finite-dimensional cases.
	\end{abstract}
	
	\noindent {\em Key words: root-reductive Lie algebras, finitary Lie algebras, highest-weight modules, BGG categories $\bggO$, equivalences of categories, translation functors, tilting modules}
	
	\vspace{.5cm}

	\noindent{MSC 2010: 17B10; 17B20; 17B22}


	\section*{Introduction}

	The purpose of this paper is to further study a version of Bernstein-Gel'fand-Gel'fand (BGG) categories $\bggO$ for root-reductive Lie algebras with respect to Dynkin Borel subalgebras as defined in \cite{DVN1}.  For a reductive Lie algebra $\gfrak$ over an algebraically closed field of characteristic $0$ whose derived algebra $[\gfrak,\gfrak]$ is finite-dimensional, if $\bggO^\gfrak_\bfrak$ denotes the BGG category $\bggO$ of $\gfrak$ with respect to a certain Borel subalgebra $\bfrak$ of $\gfrak$ as defined in \cite[Chapter 1.1]{bggo}, then we know that some blocks of $\bggO^\gfrak_\bfrak$ are equivalent via translation functors (see \cite[Chapter 1.13]{bggo} and \cite[Chapter 7]{bggo}).  
	
	We denote by $\bggO^\gfrak_\bfrak[\lambda]$ for the block of $\bggO$ containing the simple object $\Llie(\lambda)$ with highest weight $\lambda\in\hfrak^*$, where $\hfrak$ is the Cartan subalgebra of $\gfrak$ contained in $\bfrak$.  Also, $W_{\gfrak,\bfrak}[\lambda]$ is the integral Weyl group for the weight $\lambda$ (see \cite[Chapter 3.4]{bggo}, wherein the notation $W_{[\lambda]}$ is used).  The Borel subalgebra $\bfrak$ induces the set of simple reflections $S_{\gfrak,\bfrak}[\lambda]$, so that $\big(W_{\gfrak,\bfrak}[\lambda],S_{\gfrak,\bfrak}[\lambda]\big)$ is a Coxeter system.
	
	In fact, \cite[Theorem 11]{Soergel} provides a stronger statement, as it a description of the categorical structure of $\bggO^\gfrak_\bfrak$ using the Weyl group of $\gfrak$.  In other words, suppose that $\gfrak$ and $\gfrak'$ are two reductive Lie algebras with Borel subalgebras $\bfrak$ and $\bfrak'$; for $\lambda\in\hfrak^*$ and $\lambda'\in(\hfrak')^*$, if the Coxeter systems $\big(W_{\gfrak,\bfrak}[\lambda],S_{\gfrak,\bfrak}[\lambda]\big)$ and $\big(W_{\gfrak',\bfrak'}[\lambda'],S_{\gfrak',\bfrak'}[\lambda']\big)$ are isomorphic, then the blocks $\bggO^{\gfrak}_{\bfrak}[\lambda]$ and $\bggO^{\gfrak'}_{\bfrak'}[\lambda']$ are equivalent as categories.
	
	The paper \cite{fiebig} studies Kac-Moody algebras and obtains a similar result to \cite[Theorem 11]{Soergel}.  If $\gfrak$ and $\gfrak'$ are complex symmetrizable Kac-Moody algebras with Borel subalgebras $\bfrak$ and $\bfrak'$ and Cartan subalgebras $\hfrak$ and $\hfrak'$, where $\hfrak\subseteq \bfrak\subseteq\gfrak$ and $\hfrak'\subseteq\bfrak'\subseteq \gfrak'$.  We denote by $\bggO^{\gfrak}_{\bfrak}$ and $\bggO^{\gfrak'}_{\bfrak'}$ for the corresponding BGG categories $\bggO$ for the pairs $(\gfrak,\bfrak)$ and $(\gfrak',\bfrak')$, respectively.  Let $\Lambda\subseteq \hfrak^*$ be the set of highest weights of simple objects in a block of $\bggO^{\gfrak}_{\bfrak}$, and write $\bggO_{\Lambda}$ for the said block.  The notations $\Lambda'\subseteq (\hfrak')^*$ and $\bggO'_{\Lambda'}$ are defined similarly for $\bggO^{\gfrak'}_{\bfrak'}$.  For specific pairs $\Lambda$ and $\Lambda'$, \cite[Theorem 4.1]{fiebig} establishes an equivalence between the categories $\bggO_\Lambda$ and $\bggO'_{\Lambda'}$.  One of the necessary conditions for the existence of an equivalence in \cite[Theorem 4.1]{fiebig} is that there exists an isomorphism between relevant Coxeter systems.
	
	In the present paper, we shall look at the subcategory $\mathcal{O}^\gfrak_\bfrak[\lambda]$ consisting of objects of finite length from the block $\bbggO^\gfrak_\bfrak[\lambda]$, where $\bbggO$ is an extended BGG category $\bggO$ for a root-reductive Lie algebra $\gfrak$ with respect to a Dynkin Borel subalgebras $\bfrak$,  and $\lambda\in\hfrak^*$.  Here, $\hfrak$ is the unique splitting maximal toral subalgebra of $\gfrak$ contained in $\bfrak$.  We obtain a similar result to \cite[Theorem 11]{Soergel} and \cite[Theorem 4.1]{fiebig}, with \cite[Theorem 11]{Soergel} being the crucial ingredient for our proof.
	
	Another main topic of this paper is tilting theory.  In the case where $\gfrak$ is a reductive Lie algebra with $[\gfrak,\gfrak]$ being finite-dimensional, tilting modules in $\bggO^\gfrak_\bfrak$ are objects with both standard and costandard filtrations (see \cite[Chapter 11.1]{bggo}).  For a root-reductive Lie algebra $\gfrak$, indecomposable objects in $\bbggO^\gfrak_\bfrak$ can potentially have infinite length, in which case the notion of filtrations may not apply to $\bbggO$.   However, if we generalize the definition of filtrations, then it is possible to define tilting modules in $\bbggO^\gfrak_\bfrak$ in a similar manner.  	
	
	
	This paper consists of three sections.  The first section provides necessary foundations for other sections such as a brief recapitulation of the results from \cite{DVN1} and some relevant definitions such as generalized filtrations.  This section also provides a characterization of integrable modules in our version of BGG categories $\bggO$ for root-reductive Lie algebras, which are usually denoted by $\bbggO$.  The second section provides a visualization of the subcategory of each block of $\bbggO$ consisting of modules of finite length, proving that each subcategory is a direct limit of subcategories of some categories $\bggO$ for reductive Lie algebras with finite-dimensional derived algebras. The final section deals with the construction and the properties of tilting modules in $\bbggO$.

\subsection*{Acknowledgement}

	The author has been supported by ISF Grant 711/18.  The author would also like to thank Dr. Inna Entova from Ben-Gurion University of the Negev and Professor Maria Gorelik from Weizmann Institute of Science for the help towards the completion of this paper.






	\section{Preliminaries}
	
	All vector spaces and Lie algebras are defined over an algebraically closed field $\mathbb{K}$ of characteristic $0$.  For a vector space $V$, $\dim V$ is the $\mathbb{K}$-dimension of $V$ and $V^*$ denotes its algebraic dual $\Hom_\mathbb{K}(V,\mathbb{K})$.  Unless otherwise specified, the tensor product $\otimes$ is defined over $\mathbb{K}$.  For a Lie algebra $\gfrak$, $\mathfrak{U}(\gfrak)$ is its universal enveloping algebra.


	\subsection{Root-Reductive Lie Algebras and Categories $\bbggO$}
	
	Let $\mathfrak{g}$ be a root-reductive Lie algebra in the sense of \cite[Definitioin 1.1]{DVN1}.  Suppose that $\mathfrak{h}$ is a splitting maximal toral subalgebra of $\gfrak$ in the sense of \cite[Definitioin 1.2]{DVN1}, and $\mathfrak{b}$ is a Dynkin Borel subalgebra of $\mathfrak{g}$ (see \cite[Definition 1.5]{DVN1}) that contains $\mathfrak{h}$.  Let $\nfrak:=[\bfrak,\bfrak]$ (we sometimes write $\bfrak^+$ and $\nfrak^+$ for $\bfrak$ and $\nfrak$, respectively).  If $\bfrak^-$ is the unique Borel subalgebra of $\gfrak$ such that $\bfrak^+\cap\bfrak^-=\hfrak$, then we have the following decompositions of vector spaces: $\bfrak^{\pm}=\hfrak\oplus\nfrak^{\pm}$ and $\gfrak=\nfrak^-\oplus\hfrak\oplus\nfrak^+$.
	
	For each $\mathfrak{h}$-root $\alpha$ of $\mathfrak{g}$, the $\hfrak$-root space of $\mathfrak{g}$ associated to $\alpha$ is given by $\gfrak^\alpha$.   With respect to $\mathfrak{b}$, the set $\Phi_{\mathfrak{g},\mathfrak{h}}$ of $\mathfrak{h}$-roots of $\mathfrak{g}$ can be partitioned into two disjoint subsets $\Phi^+_{\mathfrak{g},\mathfrak{b}}$ consisting of positive $\mathfrak{b}$-roots and $\Phi^-_{\mathfrak{g},\mathfrak{b}}$ consisting of negative $\mathfrak{b}$-roots.  Write $W_{\gfrak,\hfrak}$ for the Weyl group of $\Phi_{\gfrak,\hfrak}$.  Also, $\Lambda_{\gfrak,\hfrak}:=\operatorname{span}_{\mathbb{Z}}\Phi_{\gfrak,\hfrak}$.  When there is no risk of confusion, we shall write $\Phi$, $\Phi^+$, $\Phi^-$, $W$ , and $\Lambda$ for $\Phi_{\mathfrak{g},\mathfrak{h}}$, $\Phi^+_{\mathfrak{g},\mathfrak{b}}$, $\Phi^-_{\mathfrak{g},\mathfrak{b}}$, $W_{\gfrak,\hfrak}$, and $\Lambda_{\gfrak,\hfrak}$, respectively.  
	
	The set of (positive) $\bfrak$-simple roots is denoted by $\Sigma_{\gfrak,\bfrak}$, or $\Sigma^+_{\gfrak,\bfrak}$.  The set of negative $\bfrak$-simple roots is given by $\Sigma^-_{\gfrak,\bfrak}$.  For convenience, we also write $\Sigma$, $\Sigma^+$, and $\Sigma^-$ for $\Sigma_{\gfrak,\bfrak}$, $\Sigma^+_{\gfrak,\bfrak}$, and $\Sigma^-_{\gfrak,\bfrak}$.  For each $\alpha\in \Phi$, let $x_{+\alpha}\in \gfrak^{+\alpha}$, $x_{-\alpha}\in\gfrak^{-\alpha}$, and $h_\alpha\in \left[\gfrak^{+\alpha},\gfrak^{-\alpha}\right]$ be such that $h_\alpha=\left[x_{+\alpha},x_{-\alpha}\right]$ and $\alpha\left(h_\alpha\right)=2$ (that is, $h_\alpha$ is the coroot of $\alpha$).  Thus, $\big\{x_\alpha\,\big|\,\alpha\in \Phi\big\}\cup\big\{h_\alpha\,\big|\,\alpha\in\Sigma\big\}$ is a Chevalley basis of $[\gfrak,\gfrak]$.
	
	For convenience, we fix a filtration $\dot{\gfrak}_1\subseteq \dot{\gfrak}_2\subseteq \dot{\gfrak}_3\subseteq\ldots$ of $\mathfrak{g}$ such that each $\dot{\gfrak}_n$ is a finite-dimensional reductive Lie algebra with $\dot{\bfrak}_n:=\bfrak\cap\dot{\gfrak}_n$ and $\dot{\hfrak}_n:=\hfrak\cap\dot{\gfrak}_n$ as a Borel subalgebra and a Cartan subalgebra, respectively.  We also define $\gfrak_n:=\dot{\gfrak}_n+\hfrak$ and $\bfrak_n:=\dot{\bfrak}_n+\hfrak$.  The notations $\dot{\bfrak}_n^\pm$, $\bfrak^\pm_n$, $\nfrak^{\pm}_n$, and $\nfrak_n$ carry similar meanings.  
	
	Note that there exists $\rho\in\hfrak^*$ such that $\rho|_{\dot{\hfrak}_n}$ is the half sum of $\dot{\bfrak}_n$-positive roots of each $\dot{\gfrak}_n$.  Then, we define the dot action of $W$ on $\hfrak^*$ by $w\cdot\lambda=w(\lambda+\rho)-\rho$ for each $\lambda\in\hfrak^*$.  While the map $\rho$ may not be unique, the dot action is independent of the choice of $\rho$.  For a fixed $\lambda\in\hfrak^*$, the subgroup $W_{\gfrak,\bfrak}[\lambda]$ (also denoted by $W[\lambda]$) of $W_{\gfrak,\hfrak}$ consists of $w\in W_{\gfrak,\hfrak}$ such that $w\cdot\lambda-\lambda\in\Lambda_{\gfrak,\hfrak}$.
	
	Write $\bbggO^{\mathfrak{g}}_{\bfrak}$ for the extended category $\bggO$ for the pair $(\mathfrak{g},\mathfrak{b})$ (see \cite[Definition 2.1]{DVN1}).  For simplicity, we also write $\bbggO$ for $\bbggO^{\gfrak}_{\hfrak}$.  For each $M\in \bbggO$ and $\lambda\in\hfrak^*$, $M^\lambda$ denotes the $\mathfrak{h}$-weight space with respect to the weight $\lambda$.  Let $(\_)^\vee: \bbggO^{\mathfrak{g}}_{\bfrak}\rightsquigarrow  \bbggO^{\mathfrak{g}}_{\bfrak}$ be the duality functor as defined in \cite[Definition 2.2.]{DVN1}.  Then, $\Delta^{\gfrak}_{\bfrak}(\lambda)$, or simply $\Delta(\lambda)$, is defined to be the Verma module with highest weight $\lambda\in\hfrak^*$ (see \cite[Definition 1.9]{DVN1}).  We also write $\nabla^\gfrak_\bfrak(\lambda)$, or simply $\nabla(\lambda)$, for the co-Verma module with highest weight $\lambda\in\hfrak^*$, namely, $\nabla(\lambda)=\big(\Delta(\lambda)\big)^\vee$.  Write $\Llie^\gfrak_\bfrak(\lambda)$, or simply $\Llie(\lambda)$, for the simple quotient of $\Delta(\lambda)$. 
	
	For each $\lambda\in \hfrak^*$, we shall denote by $[\lambda]$ the set of all weights $\mu\in W\cdot \lambda$ such that $\lambda-\mu\in \Lambda$.  The definition of \emph{abstract blocks} is given by \cite[Definition 4.13]{Penkov-Chirvasitu}.  A \emph{block} in our consideration is the full subcategory of $\bbggO$ consisting of all objects belonging in the same abstract block.  Due to \cite[Theorem 3.4]{DVN1}, we can write each block of $\bbggO$ as $\bbggO[\lambda]$, where $\bbggO[\lambda]$ is the unique block of $\bbggO$ that contains $\Delta(\lambda)$.  If $\Omega^\gfrak_\bfrak$, or simply $\Omega$, is the set of all $[\lambda]$, where $\lambda\in\hfrak^*$, then
	\begin{align}
		\bbggO=\bigoplus_{[\lambda]\in\Omega}\,\bbggO[\lambda]\,.
	\end{align}
	Note that, if $\gfrak$ is finite-dimensional, then $\bbggO[\lambda]=\bggO[\lambda]$.
	
	For a given $\lambda\in\hfrak^*$, let $\pr_{\gfrak,\bfrak}^\lambda:\bbggO\rightsquigarrow \bbggO[\lambda]$ denote the projection onto the $[\lambda]$-block.  We also write $\operatorname{inj}_{\gfrak,\bfrak}^\lambda:\bbggO[\lambda]\rightsquigarrow\bbggO$ for the injection from the $[\lambda]$-block.  When the context is clear, $\pr^\lambda$ and $\operatorname{inj}^\lambda$ are used instead.  Note that both functors are exact, and are adjoint to one another.
	
	 For each $M\in \bbggO$, $\Pi(M)\defeq\big\{\lambda\in\hfrak^*\,\big|\,M^\lambda\ne0\big\}$ and $\operatorname{ch}_\hfrak(M)\defeq\sum\limits_{\lambda\in\hfrak^*}\,\dim (M^\lambda)\,e^\lambda$ is the formal character of $M$ (with the standard multiplication rule given by $e^\lambda e^\mu\defeq e^{\lambda+\mu}$ for all $\lambda,\mu\in\hfrak^*$).  When the context is clear, $\ch(M)$ denotes $\ch_\mathfrak{h}(M)$.  We note that $\operatorname{ch}(M)=\sum\limits_{\lambda\in\hfrak^*}\,\big[M:\Llie(\lambda)\big]\,\operatorname{ch}\big(\Llie(\lambda)\big)$, where $[M:L]$ denotes the multiplicity of a simple object $L$ in $M$ (see \cite[Corollary 2.10]{DVN1}).  If $L$ is a simple object such that $[M:L]>0$, then $L$ is called a \emph{composition factor} of $M$.

	
	\subsection{Generalized Filtrations}
	
	Let $\mathcal{C}$ be an abelian category.  Fix a family $\mathcal{F}\subseteq\mathcal{C}$.
	
	\begin{define}
		A \emph{generalized $\mathcal{F}$-filtration} of $M\in\mathcal{C}$ is a collection $(M_j)_{j\in J}$ of subobjects $M_j$ of $M$, where $(J,\preceq)$ is a totally ordered set, such that
		\begin{enumerate}[(i)]
			\item $M_j\subsetneq M_k$ for all $j,k\in J$ such that $j\prec k$,
			\item $\bigcap_{j\in J}\,M_j=0$,
			\item $\bigcup_{j\in J}\,M_j=M$, and
			\item for each $j\in J$, $M_j\Big/\left(\bigcup\limits_{k\prec j}\,M_k\right)$ is an object in $\mathcal{F}$.
		\end{enumerate}
	\end{define}
	
	\begin{define}
		Two generalized $\mathcal{F}$-filtrations $(M_j)_{j\in J}$ and $(M'_{j'})_{j'\in J'}$ of $M\in\mathcal{C}$ (where $\preceq$ and $\preceq'$ are the respective total orders on $J$ and $J'$) are said to be \emph{$\mathcal{F}$-equivalent} if there exists a bijectioin $f:J\to J'$ such that $M_j\Big/\left(\bigcup\limits_{k\prec j}\,M_k\right) \cong M'_{f(j)}\Big/\left(\bigcup\limits_{k'\prec f(j)}\,M'_{k'}\right)$ for every $j\in J$ that is not the least element of $J$.
	\end{define}
	
	\begin{define}
	 	We say that $\mathcal{F}$ is a \emph{complete filter} if, for any $M\in\mathcal{C}$, $M$ has a generalized $\mathcal{F}$-filtration.  We say that $\mathcal{F}$ is a \emph{good filter} if any two filtrations $(M_j)_{j\in J}$ and $(M'_{j'})_{j'\in J'}$ of a single object $M\in\mathcal{C}$ are $\mathcal{F}$-equivalent.
	\end{define}

	\begin{define}
	 	We define $\mathcal{F}(\mathcal{C})$ to be the full subcategory of $\mathcal{C}$ whose objects are those with generalized $\mathcal{F}$-filtrations.
	\end{define}
	
	Consider $\mathcal{C}:=\bbggO$.  We have the following theorem.
	
	\begin{thm}
	Define $\boldsymbol{\Delta}:=\big\{\Delta(\lambda)\,\big|\,\lambda\in\hfrak^*\big\}$, $\boldsymbol{\nabla}:=\big\{\nabla(\lambda)\,\big|\,\lambda\in\hfrak^*\big\}$, and  $\boldsymbol{L}:=\big\{\Llie(\lambda)\,\big|\,\lambda\in\hfrak^*\big\}$ as subcollections of the category $\bbggO$.
	\begin{enumerate}[(a)]
		\item The collection $\boldsymbol{\Delta}$ is a good filter of $\bbggO$.  (A generalized $\boldsymbol{\Delta}$-filtration is also called a generalized standard filtration.)
		\item The collection $\boldsymbol{\nabla}$ is a good filter of $\bbggO$.  (A generalized $\boldsymbol{\nabla}$-filtration is also called a generalized standard filtration.)
		\item The collection $\boldsymbol{L}$  is a good and complete filter of $\bbggO$.  (A generalized $\boldsymbol{L}$-filtration is also known as a generalized composition series.)
	\end{enumerate}
	\label{thm:goodfilter}
	\end{thm}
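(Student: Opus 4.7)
The plan is to handle (c) first by an invariant-counting plus Zorn's Lemma argument, then derive (a) by a parallel argument using a $\Hom$-based multiplicity invariant, and finally obtain (b) from (a) via the duality $(\_)^\vee$.

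For the completeness assertion in (c), I would apply Zorn's Lemma to the poset of pairs $\bigl(N,(N_j)_{j\in J}\bigr)$ in which $N\subseteq M$ is a subobject equipped with a generalized $\boldsymbol{L}$-filtration, ordered by ``prefix extension'' (one filtration dominates another if its indexing set contains the other's as an initial segment and the subobjects agree on that segment). Every chain admits an upper bound by taking the union of the indexing sets, with the induced total order, and the union of the subobjects, so Zorn yields a maximal element $\bigl(N^\ast,(N^\ast_j)_{j\in J^\ast}\bigr)$. If $N^\ast\subsetneq M$, then $M/N^\ast$ is a nonzero object of $\bbggO$, and the auxiliary claim I would invoke is that every nonzero object of $\bbggO$ has a nonzero socle containing some $\Llie(\mu)$; this follows from block decomposition together with the weight-structural results of \cite{DVN1}. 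Granted this, the preimage of such a simple subobject extends the filtration by one additional step at the top and contradicts maximality, forcing $N^\ast=M$.

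For the good-filter assertion in (c), I would show that for any generalized $\boldsymbol{L}$-filtration $(M_j)_{j\in J}$ of $M$ and any $\lambda\in\hfrak^*$, the cardinality of $\bigl\{j\in J \,\big|\, M_j\big/\bigcup_{k\prec j}M_k\cong \Llie(\lambda)\bigr\}$ equals the intrinsic invariant $[M:\Llie(\lambda)]$ already introduced in the excerpt. Comparing formal characters (each $\hfrak$-weight space of $M$ is finite-dimensional and $\ch$ is additive along generalized filtrations when evaluated on a fixed weight) with the expansion $\ch(M)=\sum_\mu [M:\Llie(\mu)]\,\ch(\Llie(\mu))$ forces the multiplicities to match. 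Hence any two generalized $\boldsymbol{L}$-filtrations of $M$ share the same multiset of simple successive quotients, and a bijection respecting isomorphism types supplies the required $\boldsymbol{L}$-equivalence.

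For (a), I would use the inputs $\Hom_{\bbggO}\bigl(\Delta(\mu),\nabla(\lambda)\bigr)\cong\mathbb{K}\,\delta_{\lambda,\mu}$ and the vanishing $\Ext^1_{\bbggO}\bigl(\Delta(\mu),\nabla(\lambda)\bigr)=0$, both standard $\bggO$-style facts extracted from \cite{DVN1}. Applying $\Hom_{\bbggO}(-,\nabla(\lambda))$ to a generalized $\boldsymbol{\Delta}$-filtration and proceeding by transfinite induction on the index set, with the $\Ext^1$-vanishing making the sequences at successor steps short-exact, I obtain that $\dim\Hom_{\bbggO}(M,\nabla(\lambda))$ counts the successive quotients isomorphic to $\Delta(\lambda)$. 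Since this dimension is intrinsic to $M$, any two generalized $\boldsymbol{\Delta}$-filtrations yield identical multiplicity functions and are $\boldsymbol{\Delta}$-equivalent. Part (b) follows by applying $(\_)^\vee$, which converts a generalized $\boldsymbol{\Delta}$-filtration into a generalized $\boldsymbol{\nabla}$-filtration and swaps $\Delta(\lambda)\leftrightarrow\nabla(\lambda)$.

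The principal obstacle I anticipate is the transfinite inductive step for (a): at a limit index $j\in J$, one must verify that $\Hom_{\bbggO}\bigl(\bigcup_{k\prec j}M_k,\nabla(\lambda)\bigr)$ behaves as the expected inverse limit of the predecessor Hom-spaces. The $\Ext^1$-vanishing handles one-step extensions cleanly, but propagating through an arbitrary ordinal requires care; I would lean on the fact that $\nabla(\lambda)$ has finite-dimensional $\hfrak$-weight spaces, so that weight-by-weight the inverse system of Hom-spaces consists of eventually constant finite-dimensional vector spaces and passes to the limit. A secondary but non-routine point is the nonzero-socle claim used in (c), which I expect to extract from the block analysis and the definition of $\bbggO$ in \cite{DVN1}, but which may itself demand a short dedicated argument.
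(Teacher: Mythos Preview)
Your reductions---(b) from (a) via duality, and the character-matching for goodness in (c)---align with the paper, which dispatches (c) by citing \cite[Corollary~2.10]{DVN1} and obtains (b) from (a) by duality. The real divergence is in (a).

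The paper proves (a) by a direct formal-character computation that sidesteps any induction on the filtration. Writing $p=\ch\bigl(\Delta(0)\bigr)$, a generalized $\boldsymbol{\Delta}$-filtration of $M$ gives $\ch(M)=ap$ where $a=\sum_j e^{\mu(j)}$ records the highest weights of the successive quotients. The key observation is that $p$ is \emph{invertible}: the infinite product $q=\prod_{\alpha\in\Phi^+}(e^0-e^{-\alpha})$ is well-defined precisely because $\bfrak$ is a Dynkin Borel subalgebra, and $pq=e^0$. Hence $a=\ch(M)\cdot q$ is intrinsic to $M$, so any two filtrations yield the same multiset of highest weights. This is a two-line argument once you see the inverse.

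Your $\Hom$-counting route is essentially what the paper does \emph{later}, in Theorem~\ref{thm:dimstandard}, to interpret the already-well-defined multiplicity $\{M:\Delta(\lambda)\}$---but even there the paper does not induct transfinitely on the filtration index. It reduces to indecomposable $M$, notes that $\sum_{\xi\succeq\lambda}\dim M^\xi$ is a finite integer in the relevant block, and peels Verma submodules off the top via Proposition~\ref{prop:standardfiltration}(a), performing ordinary finite induction on that integer.

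There is also a structural issue in your plan that goes beyond the limit-step concern you flag: the definition of a generalized filtration allows $(J,\preceq)$ to be an arbitrary \emph{totally} ordered set, not a well-ordered one, so ``transfinite induction on the index set'' is not available as stated---there need be no successors or limit ordinals. You would first have to replace the given filtration by an ordinal-indexed one with the same quotient multiset, and justifying that replacement is already close to the goodness statement you want. The paper's character-inversion argument avoids this entirely, and is what I would recommend you adopt here; your $\Hom$-based invariant is then recovered afterwards, as the paper does, once multiplicities are known to be well-defined.
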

	
	\begin{proof} Part (c) follows from \cite[Corollary 2.10]{DVN1}.  By employing duality, Part (b) is a trivial consequence of Part (a).  We shall now prove Part (a).
		
		Suppose that $M\in\bbggO$ has two generalized standard filtrations $(M_j)_{j\in J}$ and $(M'_{j'})_{j'\in J'}$ of $M\in\mathcal{C}$ (where $\preceq$ and $\preceq'$ are the respective total orders on $J$ and $J'$).  Without loss of generality, we may assume that $M$ lies a single block $\bbggO[\lambda]$ of $\bbggO$.
	
		Let $p$ denote the formal character of $\Delta(0)$.  For $j\in J$ and $j'\in J'$ that are not the least elements of $J$ and $J'$, respectively, suppose that $\mu(j)$ and $\mu'(j')$ denote the highest weights of $M_j\Big/\bigcup_{k\prec j}\,M_k$ and $M'_{j'}\Big/\bigcup_{k'\prec'j'}\,M'_{k'}$, respectively.  Write $a$ and $a'$ for the sum of all $e^{\mu(j)}$ and the sum of all $e^{\mu'(j')}$, respectively.  It follows that $\ch(M)=ap$ and $\ch(M)=a'p$.
		
		We now let $q$ to be the infinite product of $e^0-e^{-\alpha}$, where $\alpha$ runs over all $\bfrak$-positive roots.  Since $\bfrak$ is a Dynkin Borel subalgebra, $q$ is well defined.  We can easily show that $pq=e^0$.  Therefore, 
		\begin{align}
			a=ae^0=a(pq)=(ap)q=\ch(M)\,q=(a'p)q=a'(pq)=a'e^0=a'\,.
		\end{align}
		The claim follows immediately.
	\end{proof}
	
	\begin{cor}
		For each object $M\in\boldsymbol{\Delta}(\bbggO)$ and a given generalized standard filtration $(M_j)_{j\in J}$ of $M$, the number of times $\Delta(\lambda)$ occurs as a quotient $M_j\Big/\bigcup_{k\prec j}\,M_k$ is a finite nonnegative integer, which is independent of the choice of the generalized standard filtration $(M_j)_{j\in J}$.  This number is denoted by $\big\{M:\Delta(\lambda)\big\}$.
		
		For each $M\in\boldsymbol{\nabla}(\bbggO)$ and a given generalized co-standard filtration $(M_j)_{j\in J}$ of $M$, the number of times $\nabla(\lambda)$ occurs as a quotient $M_j\Big/\bigcup_{k\prec j}\,M_k$ is a finite nonnegative integer which is independent of the choice of the generalized co-standard filtration.  This number is denoted by $\big\{M:\nabla(\lambda)\big\}$.
	\end{cor}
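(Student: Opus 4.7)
My plan is to deduce both assertions from \Cref{thm:goodfilter} together with the fact that weight spaces of objects of $\bbggO$ are finite-dimensional. Fix $M\in\boldsymbol{\Delta}(\bbggO)$ and a generalized standard filtration $(M_j)_{j\in J}$, and let $\mu(j)$ denote the highest weight of the Verma subquotient $M_j/\bigcup_{k\prec j}M_k\cong\Delta(\mu(j))$.

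For finiteness I would argue as follows. Each index $j$ with $\mu(j)=\lambda$ contributes exactly $\dim\Delta(\lambda)^\lambda=1$ to $\dim M^\lambda$ via the highest-weight line of $\Delta(\lambda)$, while all other subquotients contribute non-negatively to $\dim M^\lambda$. Since weight spaces in $\bbggO$ are finite-dimensional, the number of $j$ with $\mu(j)=\lambda$ is at most $\dim M^\lambda$, hence finite; this is what makes the integer $\{M:\Delta(\lambda)\}$ well defined for the filtration at hand.

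For independence, part~(a) of \Cref{thm:goodfilter} supplies, for any two generalized standard filtrations of $M$, a bijection $f:J\to J'$ between their index sets identifying subquotient isomorphism types at all non-least indices. Because Verma modules with distinct highest weights are non-isomorphic, the count of non-least indices whose subquotient is $\Delta(\lambda)$ agrees on the two sides. The subtle point is the excluded least elements; I would dispose of this by reinvoking the character identity $a=\ch(M)\,q$ established inside the proof of \Cref{thm:goodfilter}. That identity determines $a=\sum_{\lambda}\{M:\Delta(\lambda)\}\,e^{\lambda}$ uniquely from $M$ alone, so the coefficient of $e^{\lambda}$, interpreted as the total number of indices $j\in J$ (least element included) with $\mu(j)=\lambda$, is filtration-independent. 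The co-standard statement is then immediate upon applying the duality functor $(\_)^\vee$, which sends generalized standard filtrations to generalized co-standard filtrations and $\Delta(\lambda)$ to $\nabla(\lambda)$.

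The main obstacle I anticipate is precisely the least-element convention built into the definition of $\mathcal{F}$-equivalence: the bijection $f$ provided by \Cref{thm:goodfilter} deliberately ignores minima of $J$ and $J'$, so a purely combinatorial matching between two filtrations does not on its own pin down every multiplicity. Routing the argument through the formal character $a$, which is manifestly insensitive to any such indexing convention, is the clean way around this obstacle.
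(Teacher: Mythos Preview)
Your proposal is correct and follows the same line as the paper, which offers no separate proof and treats the corollary as immediate from \Cref{thm:goodfilter}: the identity $a=\ch(M)\,q$ established there determines the formal sum $\sum_j e^{\mu(j)}$ from $M$ alone, yielding both finiteness of each coefficient and independence of the filtration, while the costandard case follows by duality. Your direct bound $\{M:\Delta(\lambda)\}\le\dim M^\lambda$ is a clean way to see finiteness without unpacking the product $\ch(M)\,q$; your concern about least elements is harmless but unnecessary, since if $J$ has a least element $j_0$ then condition~(ii) forces $M_{j_0}=\bigcap_{j}M_j=0$, so no Verma subquotient is attached to $j_0$ in the first place.
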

	
	\begin{exm}
		For fixed $\lambda,\mu\in\hfrak^*$ such that $\mu\preceq\lambda$, the truncated projective cover $\Plie^{\preceq\lambda}(\mu)$ of $\Llie(\lambda)$ lies in $\boldsymbol{\Delta}(\bbggO)$, whilst the truncated injective hull $\Ilie^{\preceq\lambda}(\mu)$of $\Llie(\lambda)$ lies in $\boldsymbol{\nabla}(\bbggO)$.  See \cite[Section 4]{DVN1}.
	\end{exm}
	
	\begin{prop}
		Suppose that $M\in\boldsymbol{\Delta}(\bbggO)$.  
		\begin{enumerate}[(a)]
			\item  If $\lambda$ is a maximal weight of $M$, then $M$ has a submodule $N$ isomorphic to $\Delta(\lambda)$, and the factor module $M/N$ is in $\boldsymbol{\Delta}(\bbggO)$.
			\item  If $N$ is a direct summand of $M$, then $N\in\boldsymbol{\Delta}(\bbggO)$.
			\item  The module $M$ is a free $\Ulie(\nfrak^-)$-module.
		\end{enumerate}
		\label{prop:standardfiltration}
	\end{prop}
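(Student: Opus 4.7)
The overall strategy is to adapt the classical proofs from the finite-dimensional BGG setting (Humphreys, Theorem 3.7) to the generalized (possibly non-well-ordered) index set $J$ of a standard filtration. For part (a), given a nonzero $v \in M^\lambda$, maximality of $\lambda$ forces $\nfrak^+ v = 0$, yielding a surjection $\Delta(\lambda) \twoheadrightarrow N := \Ulie(\nfrak^-) v$. To prove injectivity, I would locate the index at which $v$ enters the filtration. The key observation is that $\dim M^\lambda < \infty$ (as $M \in \bbggO$), and $\Delta(\mu(j))^\lambda \neq 0$ forces $\mu(j) \geq \lambda$, which by maximality of $\lambda$ in $M$ implies $\mu(j) = \lambda$; only finitely many such $j$ occur. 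Hence the chain $(M_j^\lambda)_{j \in J}$ of subspaces of $M^\lambda$ grows in dimension only at finitely many totally-ordered indices $j_1 \prec \cdots \prec j_n$, so $\{j \in J : v \in M_j\}$ has a minimum $j_* = j_i$ for some $i$. The image $\bar v$ of $v$ in $M_{j_*}/\bigcup_{k \prec j_*} M_k \cong \Delta(\lambda)$ is a nonzero $\nfrak^+$-killed weight-$\lambda$ vector, hence a scalar multiple of the canonical Verma generator; any $u \in \Ulie(\nfrak^-)$ with $uv = 0$ must therefore vanish, so $N \cong \Delta(\lambda)$. For the quotient, the plan is to set $\widetilde{M}_j := (M_j + N)/N$ and use the modular law with $M_j \cap N = 0$ for $j \prec j_*$ (and $N \subseteq M_j$ for $j \succeq j_*$) to compute that $\widetilde{M}_j/\bigcup_{k \prec j} \widetilde{M}_k \cong \Delta(\mu(j))$ for $j \neq j_*$ and vanishes at $j = j_*$; removing $j_*$ from the index set will then yield the required generalized standard filtration of $M/N$.

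For part (b), the crucial observation is that the argument of (a) only used $\nfrak^+ v = 0$, not global maximality of the weight of $v$ in $M$. Given a decomposition $M = N \oplus N'$ with $N$ the summand in question and a maximal weight $\lambda$ of $N$, any nonzero $v \in N^\lambda$ satisfies $\nfrak^+ v \subseteq N$ with weights above $\lambda$, hence $\nfrak^+ v = 0$; running the argument of (a) inside $M \in \boldsymbol{\Delta}(\bbggO)$ will give $W := \Ulie(\nfrak^-) v \cong \Delta(\lambda)$ and $M/W \in \boldsymbol{\Delta}(\bbggO)$. Since $W \subseteq N$, we have $M/W = (N/W) \oplus N'$, reducing the problem to the strictly smaller direct summand $N/W$. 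I would then invoke Zorn's lemma on the poset of pairs $(L, \phi)$, where $L$ is a submodule of $N$ equipped with a generalized standard filtration $\phi$ such that $N/L$ is still a direct summand of $M/L$ (ordered by filtration extension; chains admit upper bounds via unions). A maximal element $(L^*, \phi^*)$ must satisfy $L^* = N$, for otherwise a maximal weight of the nonzero summand $N/L^*$ of $M/L^*$ would permit a further Verma peeling by the argument above, contradicting maximality.

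For part (c), for each $j \in J$ I would pick a lift $v_j \in M_j$ of the canonical generator of $\Delta(\mu(j)) = M_j/\bigcup_{k \prec j} M_k$ and claim that $\{v_j\}_{j \in J}$ is a free $\Ulie(\nfrak^-)$-basis of $M$. Linear independence: any relation $\sum_j u_j v_j = 0$ over $\Ulie(\nfrak^-)$ involves finitely many nonzero terms, so a maximum $j_0$ with $u_{j_0} \neq 0$ exists; projecting to $\Delta(\mu(j_0))$ annihilates all contributions with $j \prec j_0$ (as $v_j \in M_j \subseteq \bigcup_{k \prec j_0} M_k$) and leaves $u_{j_0} \bar v_{j_0} = 0$, forcing $u_{j_0} = 0$. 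The natural $\Ulie(\nfrak^-)$-linear map $\Phi : \bigoplus_{j \in J} \Delta(\mu(j)) \to M$ sending each Verma generator to $v_j$ is therefore injective; by Theorem~\ref{thm:goodfilter} its source and $M$ have equal formal character $\sum_j e^{\mu(j)}\,\ch\bigl(\Delta(0)\bigr)$, and finite-dimensionality of weight spaces in $\bbggO$ then forces $\Phi$ to be surjective, exhibiting $\{v_j\}_{j \in J}$ as the required free basis. The principal technical obstacle throughout is the absence of a well-ordering on $(J, \preceq)$: classical induction on filtration length is unavailable, so every appeal to a ``least'' or ``maximum'' index must be reduced --- via $\dim M^\lambda < \infty$ combined with the finiteness of $\{j : \mu(j) \geq \lambda\}$ --- to a finite totally-ordered sub-index set, and the iteration in (b) proceeds by Zorn's lemma in place of ordinary induction.
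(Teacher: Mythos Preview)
Your arguments for (a) and (c) are correct and essentially coincide with the paper's, with more detail supplied than the paper gives (on why the index $j_*$ exists in (a), and on spanning via formal characters in (c)).

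There is a genuine gap in (b). Your ``crucial observation'' that the argument of (a) used only $\nfrak^+ v = 0$ is correct for the conclusion $W \cong \Delta(\lambda)$, but \emph{not} for the conclusion $M/W \in \boldsymbol{\Delta}(\bbggO)$. In your own proof of (a), maximality of $\lambda$ in $M$ is precisely what forces $\mu(j_*) = \lambda$, and that equality is what makes the $j_*$-layer of the induced filtration of $M/W$ collapse to zero. When $\lambda$ is maximal only in the summand $N$, nothing prevents $\mu(j_*) > \lambda$; the $j_*$-layer of $M/W$ then becomes $\Delta\bigl(\mu(j_*)\bigr)/\Delta(\lambda)$, which is not a Verma module. (For the bare implication ``$\nfrak^+ v=0 \Rightarrow M/W\in\boldsymbol{\Delta}$'' a counterexample already exists in $\mathfrak{sl}_2$: take $M=\Delta(2)$ and $v$ the singular vector of weight $-4$, so $M/W\cong L(2)\notin\boldsymbol{\Delta}$. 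This $M$ is indecomposable, so it does not contradict the statement of (b), but it shows your proposed justification cannot work as written.) Consequently your Zorn poset, whose only invariant is that $N/L$ be a summand of $M/L$ (which is automatic since $L\subseteq N$), does not carry enough information to run the next peeling step.

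The repair is short: before peeling $W$ from $N$, first apply part (a) finitely many times to $M$ to remove the Verma layers with highest weight strictly above $\lambda$. There are only finitely many such layers (each contributes to $\dim M^\lambda<\infty$), and their generating highest-weight vectors have weight $>\lambda$, hence lie in $N'$; thus these Vermas sit inside $N'$, and after removing them one obtains $M'\in\boldsymbol{\Delta}(\bbggO)$ with $M'=N\oplus N''$ and $\lambda$ now maximal in $M'$. Part (a) then gives $M'/W\in\boldsymbol{\Delta}(\bbggO)$ with $N/W$ as a summand, and the iteration may proceed with the invariant ``$N/L$ is a direct summand of some module in $\boldsymbol{\Delta}(\bbggO)$'' rather than merely of $M/L$. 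The paper's transfinite-induction proof of (b) is equally terse on exactly this point; it too appeals to ``the same idea as the paragraph above'' without checking that an ambient $\boldsymbol{\Delta}$-module is available at each stage.
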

	
	\begin{proof}
		For Part (a), let $u$ be a maximal vector of $M$ with weight $\lambda$.   There exists an index $j\in J$ such that $u \in M_j$ but $u\notin M_{\prec j}:=\bigcup_{k\prec j}\,M_k$.  Thus, the map $\psi:\Delta(\lambda)\to M_j/M_{\prec j}$ sending $g\cdot u'\mapsto g\cdot u+M_{\prec j}$ for each $g\in \Ulie(\gfrak)$, where $u'$ is a maximal vector of $\Delta(\lambda)$, is a nonzero homomorphism of Verma modules (recalling that $M_j/M_{\prec j}$ is a Verma module).  By  \cite[Theorem 1.1]{DVN1}, $\psi$ must be injective.  Therefore, $\im(\psi)$ is a Verma submodule with highest weight $\lambda$ of the Verma module $M_j/M_{\prec j}$.  Because $\lambda$ is a maximal weight of $M$, we conclude that $\im(\psi)=M_j/M_{\prec j}$ and $\psi$ is an isomorphism of $\gfrak$-modules.  Hence, the $\gfrak$-submodule $N$ of $M$ generated by $u$ is a Verma module isomorphic to $\Delta(\lambda)$.  We now note that $M/M_j$ and $M_{\prec j}$ are both in $\boldsymbol{\Delta}(\bbggO)$.  Furthermore, because $N\cap M_{\prec j}\cong\ker(\psi)=0$, we obtain a short exact sequence $0\to M_{\prec j}\to M/N \to M/M_j\to 0$.  Hence, $M/N$ has a generalized standard filtration given by patching the generalized standard filtration of $M_{\prec j}$ with the generalized standard filtration of $M/M_j$.
		
		For Part (b), we may assume without loss of generality that $N$ is an indecomposable direct summand of $M$.  Now, define $N[0]:=N$ and $\lambda[0]:=\lambda$.  We finish the proof using transfinite induction.  For an ordinal $t$ with a predecessor $s$, suppose a module $N[s]$ and a weight $\lambda[s]\in\hfrak^*$ are given such that $\Delta\big(\lambda[s]\big)\subseteq N[s]$.  Then, define $N[t]:=N[s]/\Delta\big(\lambda[s]\big)$.  If $N[t]=0$, then we are done.  If $N[t]\neq 0$, then by taking $\lambda[t]$ to be a maximal weight of $N[t]$, using the same idea as the paragraph above, we conclude that $\Delta\big(\lambda[t]\big)\subseteq N[t]$.  On the other hand, if $t$ is a limit ordinal, then we have a directed system of modules $\big(N[s]\big)_{s<t}$.  Define $N[t]$ to be the direct limit of the modules $N[s]$ for $s<t$.  As before, if $N[t]=0$, then we are done.  If not, we then take $\lambda[t]$ to be a maximal weight of $N[t]$.  Then, again,  $\Delta\big(\lambda[t]\big)\subseteq N[t]$.   Since the multiset of composition factors of $N$ is a countable multiset, this procedure must stop at some countable ordinal $\tau$, where $N[\tau]=0$.  We then obtain a generalized standard filtration of $N$.  
		
		For Part (c), let $\left(M_j\right)_{j\in J}$ be a generalized standard filtration of $M$.  For an element $j\in J$ that is not the minimum element of $J$, take $m_j\in M_j\setminus \bigcup_{k\prec j}\,M_k$ such that $m_j$ is a weight vector whose weight is the highest weight of $M_j\Big/\bigcup_{k\prec j}\,M_k$.  Then, $M$ is a free $\Ulie(\nfrak^-)$-module with basis $\left\{m_j\,|\,j\in J\right\}$.
	\end{proof}
	
	\begin{cor}
		Suppose that $M\in\boldsymbol{\nabla}(\bbggO)$.  
		\begin{enumerate}[(a)]
			\item  If $\lambda$ is a maximal weight of $M$, then $M$ has a submodule $N$ such that $M/N$ is isomorphic to $\nabla(\lambda)$, and the submodule $N$ is in $\boldsymbol{\nabla}(\bbggO)$.
			\item  If $N$ is a direct summand of $M$, then $N\in\boldsymbol{\nabla}(\bbggO)$.
			\item  The module $M$ is a free $\Ulie(\nfrak^-)$-module.
		\end{enumerate}
		\label{cor:costandardfiltration}
	\end{cor}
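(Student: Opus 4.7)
The plan is to derive all three parts from the corresponding statements of \Cref{prop:standardfiltration} via the duality functor $(\_)^\vee$. This functor is exact, contravariant, involutive ($(M^\vee)^\vee\cong M$), preserves $\hfrak$-weight-space dimensions (and hence also maximal weights), and interchanges Verma and co-Verma modules: $\Delta(\mu)^\vee\cong\nabla(\mu)$ and $\nabla(\mu)^\vee\cong\Delta(\mu)$ for every $\mu\in\hfrak^*$. Consequently, applying $(\_)^\vee$ to a generalized $\boldsymbol{\nabla}$-filtration $(M_j)_{j\in J}$ of $M$, with the total order on the indexing set reversed, produces a generalized $\boldsymbol{\Delta}$-filtration of $M^\vee$; in particular, $M\in\boldsymbol{\nabla}(\bbggO)$ if and only if $M^\vee\in\boldsymbol{\Delta}(\bbggO)$.

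For Part (a), the weight $\lambda$ is maximal in $M^\vee$ as well, so \Cref{prop:standardfiltration}(a) applied to $M^\vee$ produces a submodule $K\subseteq M^\vee$ with $K\cong\Delta(\lambda)$ and $M^\vee/K\in\boldsymbol{\Delta}(\bbggO)$. Dualizing the short exact sequence $0\to K\to M^\vee\to M^\vee/K\to 0$ and invoking involutivity yields $0\to(M^\vee/K)^\vee\to M\to K^\vee\to 0$; the submodule $N:=(M^\vee/K)^\vee$ then lies in $\boldsymbol{\nabla}(\bbggO)$ and satisfies $M/N\cong K^\vee\cong\nabla(\lambda)$, as required.

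For Part (b), a decomposition $M=N\oplus N'$ dualizes to $M^\vee=N^\vee\oplus(N')^\vee\in\boldsymbol{\Delta}(\bbggO)$, and \Cref{prop:standardfiltration}(b) forces $N^\vee\in\boldsymbol{\Delta}(\bbggO)$, whence $N\cong(N^\vee)^\vee\in\boldsymbol{\nabla}(\bbggO)$.

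Part (c) is the main obstacle: the duality functor is contravariant and does not transport $\Ulie(\nfrak^-)$-freeness in any direct way, since it converts the $\Ulie(\nfrak^-)$-action on $M$ into a $\Ulie(\nfrak^+)$-action on $M^\vee$ twisted by a Chevalley-type anti-involution. My plan is to mirror the transfinite construction in the proof of \Cref{prop:standardfiltration}(c) directly on the given generalized $\boldsymbol{\nabla}$-filtration $(M_j)_{j\in J}$: for each non-minimal $j\in J$, choose a lift $m_j\in M_j\setminus\bigcup_{k\prec j}M_k$ of a $\Ulie(\nfrak^-)$-cyclic generator of the successive co-Verma quotient, and then argue, using the weight-space structure of $M$ and the freeness of each $\nabla(\mu_j)$ over $\Ulie(\nfrak^-)$ in the Dynkin-Borel, root-reductive setting, that $\{m_j\}_{j\in J}$ is a $\Ulie(\nfrak^-)$-basis of $M$. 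The technical crux — and the point that distinguishes this setting from the classical BGG category — is verifying the $\Ulie(\nfrak^-)$-freeness of the building blocks $\nabla(\mu_j)$; once this is in hand, the patching across the filtration is formally identical to that of \Cref{prop:standardfiltration}(c).
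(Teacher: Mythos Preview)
Your treatment of Parts (a) and (b) via duality is correct and is exactly the paper's intended argument: the corollary is stated without proof, the implicit justification being the reduction to \Cref{prop:standardfiltration} through $(\_)^\vee$ that you carry out.

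For Part (c) you rightly observe that duality does not transport $\Ulie(\nfrak^-)$-freeness, and you propose to argue directly by first establishing that each $\nabla(\mu)$ is free over $\Ulie(\nfrak^-)$. This step fails, and not only in the root-reductive generality you single out as the hard case: already for $\gfrak=\mathfrak{sl}_2$ and $\mu$ a dominant integral weight, the simple socle $\Llie(\mu)\hookrightarrow\nabla(\mu)$ is finite-dimensional, so $\nfrak^-$ acts nilpotently on it and any nonzero socle vector is $\Ulie(\nfrak^-)$-torsion. Hence $\nabla(\mu)$ is not free over $\Ulie(\nfrak^-)$; since $\nabla(\mu)\in\boldsymbol{\nabla}(\bbggO)$ trivially, Part (c) as written is false in general. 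The paper gives no proof of this part and never invokes it (the only citation of the corollary, in \Cref{prop:tiltingprops}, uses Part (b)); it appears to be a slip from dualizing \Cref{prop:standardfiltration} verbatim. The genuine dual of \Cref{prop:standardfiltration}(c) is the statement that $M^\vee$ is $\Ulie(\nfrak^-)$-free, not $M$ itself, and that is what your duality argument actually delivers.
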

	
	\begin{define}
		Let $\pazocal{D}^\gfrak_\bfrak$ (or simply, $\pazocal{D}$)  denote the subcategory $\boldsymbol{\Delta}(\bbggO^\gfrak_\bfrak)\cap\boldsymbol{\nabla}(\bbggO^\gfrak_\bfrak)$.  The objects in $\pazocal{D}$ are called \emph{tilting modules}.
	\end{define}


\subsection{Integrable Modules}
	
	\begin{define}
		Let $\mathfrak{a}$ be an arbitrary Lie algebra.  An $\mathfrak{a}$-module $M$ is said to be \emph{integrable} (or \emph{$\mathfrak{a}$-integrable}) if, for any $m\in M$ and $a\in\mathfrak{a}$, the elements $m$, $a\cdot m$, $a^2\cdot m$, $\ldots$ span a finite-dimensional subspace of $M$.
	\end{define}
	
	\begin{define}
		Let $\lambda\in\hfrak^*$.
		\begin{enumerate}[(a)]
			\item We say that $\lambda\in\mathfrak{h}^*$ is \emph{integral (with respect to $\gfrak$ and $\hfrak$)} if $h_\alpha(\lambda)$ is an integer for all $\alpha\in\Phi$.  
			\item If  $h_\alpha(\lambda)\in\mathbb{Z}_{\geq 0}$ for every $\alpha\in \Phi^+$, then $\lambda$ is said to be \emph{dominant-integral (with respect to $\gfrak$ and $\bfrak$)}.  
			\item If $h_\alpha(\lambda)\notin\mathbb{Z}$ for all $\alpha\in \Phi$, then $\lambda$ is said to be \emph{nonintegral (with respect to $\gfrak$ and $\hfrak$)}.  
			\item If $h_\alpha(\lambda)\notin\mathbb{Z}$ for all but finitely many $\alpha\in \Phi$, then $\lambda$ is said to be \emph{almost nonintegral (with respect to $\gfrak$ and $\hfrak$)}.  
		\end{enumerate}
	\end{define}
	
	\begin{thm}
		A module $M\in\bbggO$ is integrable if and only if it is a direct sum of simple integrable modules in $\bbggO$.  All simple integrable modules in $\bbggO$ are of the form $\Llie(\lambda)$, where $\lambda\in\hfrak^*$ is dominant-integral.  Nonisomorphic simple integrable modules belong in different blocks of $\bbggO$.
	\end{thm}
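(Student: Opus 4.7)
The plan is to treat the three assertions in order: characterize which simples are integrable, show that distinct integrable simples live in distinct blocks, and then reduce a general integrable object to a direct sum via a socle argument.

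For the \textbf{simple case}, fix $\lambda \in \hfrak^*$. If $\Llie(\lambda)$ is integrable, then for each $\alpha \in \Phi^+$ the $\mathfrak{sl}_2$-triple $(x_{+\alpha}, h_\alpha, x_{-\alpha})$ acts on $v_\lambda$ with $x_{+\alpha} v_\lambda = 0$, so $v_\lambda, x_{-\alpha} v_\lambda, x_{-\alpha}^2 v_\lambda, \ldots$ span a finite-dimensional highest weight $\mathfrak{sl}_2$-module of highest weight $h_\alpha(\lambda)$, forcing $h_\alpha(\lambda) \in \mathbb{Z}_{\geq 0}$. Conversely, for dominant-integral $\lambda$ the relation $(x_{-\alpha})^{h_\alpha(\lambda)+1} v_\lambda = 0$ holds in $\Llie(\lambda)$ for every simple positive root $\alpha$ (since that vector generates a proper Verma submodule inside $\Delta(\lambda)$). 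The subset of vectors in $\Llie(\lambda)$ on which all $x_{\pm\beta}$ with $\beta \in \Sigma$ act locally nilpotently is a $\gfrak$-submodule by the usual PBW-commutator argument (using that $\nfrak^\pm$ is generated as a Lie algebra by the simple root vectors, a consequence of $\bfrak$ being Dynkin); it contains $v_\lambda$ and so equals $\Llie(\lambda)$, yielding integrability.

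For the \textbf{block separation}, observe that $\lambda + \rho$ is strictly dominant whenever $\lambda$ is dominant-integral, hence the unique dominant element of its $W$-orbit, so $\lambda$ is the unique dominant-integral element of its dot-orbit $W \cdot \lambda$. Since the block $\bbggO[\mu]$ is determined by $[\mu] = W \cdot \mu \cap (\mu + \Lambda)$, two integrable simples $\Llie(\lambda)$ and $\Llie(\mu)$ lying in the same block must satisfy $\lambda = \mu$.

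For the \textbf{decomposition}, let $M \in \bbggO$ be integrable. Every composition factor of $M$ inherits integrability and so equals $\Llie(\mu)$ for some dominant-integral $\mu$; by the block separation and $\bbggO = \bigoplus_{[\lambda]} \bbggO[\lambda]$, it suffices to treat the case where every composition factor of $M$ is isomorphic to a single $\Llie(\mu)$. Then additivity of characters forces every weight of $M$ to be $\le \mu$, and $M^\mu \ne 0$ whenever $M \ne 0$. Let $S = \soc(M)$; since every simple submodule of $M$ is a copy of $\Llie(\mu)$, $S$ is a direct sum of copies of $\Llie(\mu)$. Suppose $M \ne S$. Then $M/S$ inherits the same constraints, so $(M/S)^\mu \ne 0$; lift a weight vector $\bar v \in (M/S)^\mu$ to $v \in M^\mu$. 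Since no weight of $M$ exceeds $\mu$, $x_{+\alpha} v = 0$ for every $\alpha \in \Phi^+$, making $v$ a maximal vector in $M$. The ``if'' direction then identifies $\Ulie(\gfrak) v$ with $\Llie(\mu)$, a simple submodule of $M$ that must lie in $S$, contradicting $\bar v \ne 0$. Hence $M = S$.

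The \textbf{main obstacle} is the recurring step of identifying an integrable highest-weight module of dominant-integral weight $\mu$ with the simple object $\Llie(\mu)$. In our root-reductive setting $\Llie(\mu)$ is typically infinite-dimensional, so one cannot appeal to the finite-dimensional shortcut: instead one combines the Serre-type nilpotency forced by integrability with the block separation above to rule out any proper submodule, which would otherwise furnish a composition factor $\Llie(\nu)$ with $\nu$ dominant-integral, $\nu \ne \mu$, and $\nu \in W \cdot \mu$---which is impossible.
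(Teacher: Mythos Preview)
Your argument is correct and takes a genuinely different route from the paper's.  The paper works throughout via the exhaustion $\gfrak=\bigcup\gfrak_n$: it shows $\Llie(\lambda)$ is integrable by writing it as $\bigcup_n\Ulie(\gfrak_n)v_\lambda$ and then checking that the restriction to each $\gfrak_k$ is a direct sum of finite-dimensional modules; and for the decomposition it reduces to indecomposable $M$, produces a simple submodule $M'\cong\Llie(\lambda)$ by the same exhaustion, observes via block separation that every composition factor of $M$ is $\Llie(\lambda)$, and then concludes $M=M'$ by citing the vanishing $\Ext^1\big(\Llie(\lambda),\Llie(\lambda)\big)=0$ from \cite[Proposition~3.8(d)]{DVN1}.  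Your approach is more intrinsic: a Kac--Moody style Serre-relation argument for integrability of $\Llie(\lambda)$, and a socle argument for the decomposition that avoids the $\Ext^1$ citation entirely.  What your route buys is self-containment; what the paper's route buys is that the passage from ``simple root vectors act locally nilpotently'' to ``$\gfrak$-integrable in the strong sense'' is immediate, since one already has local $\gfrak_n$-finiteness for every $n$.

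One point to tighten: your sentence ``\ldots yielding integrability'' skips a step.  Local nilpotency of the simple root vectors does not by itself give that an arbitrary $a\in\gfrak$ acts locally finitely.  You should remark that any such $a$ lies in some $\gfrak_n$; since $\Llie(\lambda)\in\bbggO$ the weights of $\Ulie(\gfrak_n)v$ are bounded above, and the $W_n$-invariance of the weight set (which does follow from local nilpotency of the simple root vectors) then bounds them below, so $\Ulie(\gfrak_n)v$ is finite-dimensional.  With that bridge in place your proof is complete.
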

	
	\begin{proof}
		For the first statement, we may assume that $M$ is an indecomposable module (by means of \cite[Corollary 2.6]{DVN1}).  We shall prove that $M$ is simple.  Let $u$ be a highest-weight vector of $M$ associated to the weight $\lambda$.  We claim that $\lambda$ is dominant-integral and $M\cong\Llie(\lambda)$. 
		
		First, by considering $M$ as a $\gfrak_n$-module, we easily see that $M$ is a direct sum of simple finite-dimensional $\gfrak_n$-modules.  In particular, the $\gfrak_n$-submodule $M_n$ generated by $u$ is a simple direct summand of $M$.  Note that $M_1\subseteq M_2\subseteq M_3\subseteq \ldots$.  If $M'$ is the union of $\bigcup\limits_{n\in\mathbb{Z}_{>0}}\,M_n$ where each $M_n$ is a simple $\gfrak_n$-module, then $M'$ is a simple $\gfrak$-module isomorphic to $\Llie(\lambda)$.  Clearly, $\lambda$ must be a dominant-integral weight with respect to the Lie algebra $\gfrak_n$ with the Borel subalgebra $\bfrak_n$.  Therefore, $\lambda$ is dominant-integral with respect to $\gfrak$ and $\bfrak$. 
		
		If $M\neq M'$, then $M/M'$ has a highest-weight vector of the form $u'+M'$, where $v\in M$.  Using the same argument, the submodule $M''$ of $M/M'$ generated by $u'+M'$ is isomorphic to $\Llie(\mu)$ for some dominant-integral weight $\mu\in\hfrak^*$ with respect to $\gfrak$ and $\bfrak$.  Since $M$ is indecomposable, by \cite[Proposition 3.2]{DVN1}, we conclude that $\mu\in[\lambda]$.  However, the only dominant-integral weight in $[\lambda]$ is $\lambda$ itself.  Consequently, $\mu=\lambda$.  This shows that the only possible composition factor of $M$ is $\Llie(\lambda)$.  However, there are no nontrivial extensions of $\Llie(\lambda)$ by itself (see \cite[Proposition 3.8(d)]{DVN1}).
		
		Finally, we shall prove that every simple module of the form $\Llie(\lambda)$ is integrable if $\lambda\in\hfrak^*$ is dominant-integral.  Let $v$ be a highest-weight vector of $L:=\Llie(\lambda)$.  Define $L_n:=\Ulie(\gfrak_n)\cdot v$ for every $n\in\mathbb{Z}_{>0}$.  Clearly, each $L_n\cong \Llie^{\gfrak_n}_{\bfrak_n}(\lambda)$ is finite-dimensional with $L_1\subseteq L_2\subseteq L_3\subseteq\ldots$, and $L=\bigcup\limits_{n\in\mathbb{Z}_{>0}}\,L_n$.  Fix $k\in\mathbb{Z}_{>0}$.  Now, for $n\geq k$, observe that each $L_n$ is a direct sum of simple finite-dimensional $\gfrak_k$-modules.  Consequently,  for each $n\geq k$, $L_{n+1}=L_n\oplus F^k_n$ with $F^k_n$ being a direct sum of simple finite-dimensional $\gfrak_k$-modules.  That is, $L=L_k\oplus\bigoplus \limits_{n\geq k}\,F^k_n$ is a direct sum of simple finite-dimensional $\gfrak_k$-modules.  It follows immediately that $L$ is an integrable $\gfrak$-module.
	\end{proof}
	
	\begin{cor}
		Let $\bbggO_{\textup{integrable}}$ denote the full subcategory of $\bbggO$ consisting of integrable modules.  Then, $\bbggO_{\textup{integrable}}$ is semisimple.
	\end{cor}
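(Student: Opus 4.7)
The plan is to read off the corollary from the preceding theorem with only a small amount of post-processing. First I would verify that $\bbggO_{\textup{integrable}}$ is a full abelian subcategory of $\bbggO$: $\gfrak$-integrability is manifestly inherited by subobjects, quotients, and arbitrary direct sums (this is immediate from the definition, since for $m$ in a subobject or in a quotient, the orbit $\{a^k\cdot m\}_{k\geq 0}$ is controlled by the orbit in $M$), so kernels, cokernels, and biproducts computed in $\bbggO$ stay inside $\bbggO_{\textup{integrable}}$.

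The theorem writes every $M\in\bbggO_{\textup{integrable}}$ as $\bigoplus_\lambda \Llie(\lambda)^{(I_\lambda)}$ with $\lambda$ ranging over dominant-integral weights. To upgrade this ``direct sum of simples'' statement to genuine semisimplicity of the subcategory I would show that an arbitrary short exact sequence $0\to A \to B \to C \to 0$ in $\bbggO_{\textup{integrable}}$ splits. Because nonisomorphic simple integrables belong to different blocks of $\bbggO$, the projectors $\pr^\lambda$ applied to such a sequence reduce the problem to a single isotypic component: $A$ and $B$ become direct sums of copies of a fixed $\Llie(\lambda)$ with $\lambda$ dominant-integral, and it suffices to split the restricted sequence there.

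Inside this isotypic component the splitting is linear-algebraic. Any $\gfrak$-endomorphism of $\Llie(\lambda)$ sends its highest-weight vector $v$ into the one-dimensional weight space $\Llie(\lambda)^\lambda=\mathbb{K}v$, hence is a scalar, giving $\End_\gfrak\bigl(\Llie(\lambda)\bigr)=\mathbb{K}$. The full subcategory of $\bbggO[\lambda]$ consisting of direct sums of copies of $\Llie(\lambda)$ is therefore equivalent to the category of $\mathbb{K}$-vector spaces via the multiplicity functor $N\mapsto \Hom_\gfrak\bigl(\Llie(\lambda),N\bigr)$, and in that category every short exact sequence splits. Patching these block-wise splittings together yields the desired splitting in $\bbggO_{\textup{integrable}}$.

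The main obstacle here is essentially nonexistent: the theorem does all the substantive work, and the remaining argument merely converts its conclusion into the standard formulation of semisimplicity. The one step requiring any care is the Schur-lemma computation for the potentially infinite-dimensional simple $\Llie(\lambda)$, and as noted above this is immediate from the one-dimensionality of the top weight space.
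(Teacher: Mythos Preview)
Your proposal is correct and essentially matches the paper's treatment: the paper states the corollary with no proof, regarding it as an immediate consequence of the preceding theorem (every integrable object is a direct sum of simple integrables, and these simples lie in pairwise distinct blocks). Your write-up simply unpacks, carefully and correctly, why ``every object is a direct sum of simples with $\End=\mathbb{K}$'' forces all short exact sequences to split; this is more detail than the paper supplies, but the underlying idea is the same.
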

	
	The theorem below gives another way to verify whether a module $M\in\bbggO$ is $\gfrak$-integrable.  Recall that $\Pi(M)$ is the set of $\hfrak$-weights of $M$.
	
	\begin{thm}
		Let $M\in\bbggO$.  The following consitions on $M$ are equivalent:
		\begin{enumerate}[(i)]
			\item $M$ is $\gfrak$-integrable;
			\item $M$ is $\nfrak^-$-integrable;
			\item for all $w\in W$ and $\lambda\in\hfrak^*$, $\dim (M^\lambda)=\dim (M^{w\lambda})$;
			\item the set $\Pi(M)$ is stable under the natural action of $W$.
		\end{enumerate}
	\end{thm}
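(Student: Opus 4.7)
The plan is to close the cycle $\text{(i)}\Rightarrow\text{(ii)}\Rightarrow\text{(iii)}\Rightarrow\text{(iv)}\Rightarrow\text{(i)}$. Two links come for free: $\text{(i)}\Rightarrow\text{(ii)}$ since $\nfrak^{-}\subseteq\gfrak$, and $\text{(iii)}\Rightarrow\text{(iv)}$ since $\lambda\in\Pi(M)$ amounts to $\dim M^{\lambda}>0$.

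For $\text{(ii)}\Rightarrow\text{(iii)}$, I would fix a simple root $\alpha\in\Sigma$ and look at the subalgebra $\mathbb{K}x_{\alpha}\oplus\mathbb{K}h_{\alpha}\oplus\mathbb{K}x_{-\alpha}$, a copy of $\mathfrak{sl}_{2}$. Membership in $\bbggO$ makes $x_{\alpha}$ act locally nilpotently (weight considerations together with upper-boundedness of $\Pi(M)$), hypothesis (ii) does the same for $x_{-\alpha}$, and $h_{\alpha}\in\hfrak$ acts semisimply; hence $M$ decomposes as a direct sum of finite-dimensional modules over this copy of $\mathfrak{sl}_{2}$, whose characters are symmetric under $\sigma_{\alpha}$ by the classical $\mathfrak{sl}_{2}$ theory. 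Thus $\dim M^{\lambda}=\dim M^{\sigma_{\alpha}\lambda}$ for every simple reflection $\sigma_{\alpha}$, and since those reflections generate $W$, we obtain (iii).

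The heart of the argument is $\text{(iv)}\Rightarrow\text{(i)}$. Assuming $\Pi(M)$ is $W$-stable, I would first show that for every $\alpha\in\Sigma$ and every weight vector $m\in M^{\mu}$ there exists $N\geq 0$ with $x_{-\alpha}^{N}m=0$. Suppose not; then $\mu-j\alpha\in\Pi(M)$ for all $j\geq 0$, so by (iv)
\begin{align*}
	\sigma_{\alpha}(\mu-j\alpha)=\mu+\bigl(j-\mu(h_{\alpha})\bigr)\alpha\in\Pi(M)\quad\text{for all }j\geq 0,
\end{align*}
producing weights $\mu+k\alpha\in\Pi(M)$ for arbitrarily large $k\in\mathbb{Z}_{\geq 0}$. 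This contradicts the upper-boundedness of $\Pi(M)$ intrinsic to $\bbggO$: the weight set of any $M\in\bbggO$ lies in a finite union $\bigcup_{i=1}^{r}\bigl(\lambda_{i}-\mathbb{Z}_{\geq 0}\Sigma^{+}\bigr)$, so the $\alpha$-coefficient of each presentation $\lambda_{i}-\mu-k\alpha\in\mathbb{Z}_{\geq 0}\Sigma^{+}$ must stay non-negative, bounding $k$. With both $x_{\alpha}$ and $x_{-\alpha}$ acting locally nilpotently for each $\alpha\in\Sigma$ and $\hfrak$ acting semisimply, the classical structure theorem for finite-dimensional reductive Lie algebras applied to each term of the filtration $\dot\gfrak_{1}\subseteq\dot\gfrak_{2}\subseteq\cdots$ shows that $M|_{\dot\gfrak_{n}}$ is a direct sum of finite-dimensional simple $\dot\gfrak_{n}$-modules, hence $\dot\gfrak_{n}$-integrable. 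Because every $a\in\gfrak$ lies in some $\gfrak_{n}=\dot\gfrak_{n}+\hfrak$ and $\hfrak$ acts by scalars on weight vectors, $a$ acts locally finitely on $M$, establishing (i).

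The main obstacle I anticipate is the contradiction step in $\text{(iv)}\Rightarrow\text{(i)}$: one must precisely combine the $W$-stability of $\Pi(M)$ (which propagates weights upward along the $\alpha$-line) with the upper-boundedness of the weight support of every $\bbggO$-module (which forbids arbitrarily large weights). Once local $x_{-\alpha}$-nilpotency is secured for each simple root, the passage from this to full $\gfrak$-integrability is a routine application of finite-dimensional reductive representation theory via the exhausting filtration $\dot\gfrak_{n}$.
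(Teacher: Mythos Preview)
Your cycle $\text{(i)}\Rightarrow\text{(ii)}\Rightarrow\text{(iii)}\Rightarrow\text{(iv)}\Rightarrow\text{(i)}$ matches the paper's, and the two trivial links are handled identically.  Your $\text{(ii)}\Rightarrow\text{(iii)}$ via rank-one $\mathfrak{sl}_2$-theory is correct and more elementary than the paper's route: the paper instead builds the finite-dimensional $\gfrak_n$-submodule $M_n^{(\lambda)}:=\mathfrak{U}(\gfrak_n)\cdot M^{\lambda}$ (finite-dimensional because of $\nfrak^{-}$-integrability together with local $\nfrak^{+}$-finiteness and $\dim M^{\lambda}<\infty$) and reads off the Weyl-symmetry of weight multiplicities from the finite-dimensional theory for $\gfrak_n$.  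Your argument reaches the same conclusion one simple reflection at a time.

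The genuine problem is in your $\text{(iv)}\Rightarrow\text{(i)}$.  You invoke as ``intrinsic to $\bbggO$'' that $\Pi(M)\subseteq\bigcup_{i=1}^{r}(\lambda_i-\mathbb{Z}_{\geq 0}\Sigma^{+})$ for finitely many $\lambda_i$.  In the \emph{extended} category $\bbggO$ this fails: objects need not be finitely generated and can have infinitely many maximal weights (the paper explicitly treats modules whose set of maximal weights above a given $\mu$ is infinite in the proof of its $\Ext^1$ vanishing theorem).  So the step ``arbitrarily large $k$ with $\mu+k\alpha\in\Pi(M)$ contradicts upper-boundedness'' does not go through as written.  Local $\nfrak^{+}$-finiteness only says each fixed vector is killed by some power of $x_\alpha$; it does not prevent $\Pi(M)$ from containing an unbounded $\alpha$-ray.

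The paper avoids this trap by again using the submodules $M_n^{(\lambda)}=\mathfrak{U}(\gfrak_n)\cdot M^{\lambda}$.  The point is that $M_n^{(\lambda)}$ is a \emph{finitely generated} $\gfrak_n$-module whose weights lie in the $W$-stable set $\Pi(M)$; combining $W_{\gfrak_n,\hfrak}$-stability of $\Pi(M)$ with local $\nfrak_n^{+}$-finiteness and finite-dimensional weight spaces forces $M_n^{(\lambda)}$ itself to be finite-dimensional, hence $\gfrak_n$-integrable, and letting $n$ and $\lambda$ vary gives $\gfrak$-integrability.  If you want to keep your simple-root strategy, you must replace the global boundedness claim by an argument tied to the specific generating set---for instance, work inside $\mathfrak{U}(\gfrak_n)\cdot m$ for your chosen vector $m$ rather than in all of $M$, where the needed finiteness is available.
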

	
	\begin{proof}
		
		The direction (i)$\Rightarrow$(ii) is obvious.  For (ii)$\Rightarrow$(iii), we note that $w\in W_n$ for any sufficiently large positive integer $n$.  Let $M_n^{(\lambda)}$ be the $\mathfrak{g}_n$-submodule of $M$ given by
		\begin{align}
			M_n^{(\lambda)}:=\mathfrak{U}\big(\gfrak_n\big)\cdot M^\lambda\,.
			\label{eq:Mnlambda}
		\end{align}
		Because $M$ is $\mathfrak{n}^-$-integrable, $M^\lambda$ is finite-dimensional, $\mathfrak{U}\big(\mathfrak{g}_n\big)=\mathfrak{U}\big(\mathfrak{n}_n^-\big)\cdot \mathfrak{U}(\mathfrak{h})\cdot\mathfrak{U}\big(\mathfrak{n}_n^+\big)$, and $M$ is locally $\mathfrak{U}(\mathfrak{n}^+)$-finite, we see that $M$ is locally $\mathfrak{U}\big(\mathfrak{n}_n^-\big)$-finite, whence $M_n^{(\lambda)}$ is a finite-dimensional $\mathfrak{g}_n$-submodule of $M$.  For all sufficiently large $n$, we have $\big(M_n^{(\lambda)}\big)^{w \lambda} = M^{w\lambda}$.  Since the support of a finite-dimensional module is invariant under the action of the Weyl group, the claim follows.
		
		The statement (iii)$\Rightarrow$(iv) is trivial.  We now prove (iv)$\Rightarrow$(i).  Let $M_n^{(\lambda)}$ be the module \eqref{eq:Mnlambda}.  Because $M$ is locally $\mathfrak{U}(\mathfrak{n}^+)$-finite, $\Pi(M)$ is invariant under $W_{\gfrak,\hfrak}$, and $M$ has finite-dimensional $\mathfrak{h}$-weight spaces, we conclude that the weights $M_n^{(\lambda)}$ must lie in the orbit of $\lambda$ under $W_{\gfrak_n,\hfrak_n}$, making $M_n^{(\lambda)}$ is a finite-dimensional $\mathfrak{g}_n$-module.  Hence, $M=\bigcup_{n\in\mathbb{Z}_{>0}}\,\bigcup_{\lambda\in\Pi(M)}\,M_n^{(\lambda)}$ is an integrable $\gfrak$-module.
	\end{proof}








\section{Translation Functors}


	\subsection{Some Functors between Extended Categories $\bggO$}
	
	Fix a positive integer $n$.  Let $M_n\in\bbggO^{\gfrak_n}_{\bfrak_n}$.  Define $\pfrak_{n+1}$ to be the parabolic subalgebra $\gfrak_n+\bfrak_{n+1}$ of $\gfrak_{n+1}$.  It can be easily seen that $M_n$ is a $\pfrak_{n+1}$-module, where $\bfrak_{n+1}$ acts on $M_n$ via $x_\alpha\cdot m=0$ for all $m\in M_n$ and $\alpha\in\Phi_{\gfrak_{n+1},\bfrak_{n+1}}^+\setminus\Phi_{\gfrak_n,\bfrak_n}$.

	\begin{define}
		Define $I_{n+1}:\bbggO^{\gfrak_n}_{\bfrak_n}\rightsquigarrow\bbggO_{\bfrak_{n+1}}^{\gfrak_{n+1}}$ to be the parabolic induction functor \begin{align}I_{n+1}M_n:=\Ulie(\gfrak_{n+1})\underset{\Ulie(\pfrak_{n+1})}{\otimes}M_n\end{align} for all $M_n\in\bbggO^{\gfrak_n}_{\bfrak_n}$.
	\end{define}
	
	\begin{prop}
		The functor $I_{n+1}:\bbggO^{\gfrak_n}_{\bfrak_n}\rightsquigarrow\bbggO_{\bfrak_{n+1}}^{\gfrak_{n+1}}$ is exact.
	\end{prop}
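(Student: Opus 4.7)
The plan is to reduce the exactness of $I_{n+1}$ to the freeness (hence flatness) of $\Ulie(\gfrak_{n+1})$ as a right $\Ulie(\pfrak_{n+1})$-module, via a PBW-style decomposition. There are really two steps: first passing from $\gfrak_n$-modules to $\pfrak_{n+1}$-modules, then tensoring up to $\gfrak_{n+1}$.

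First I would check that the restriction-of-scalars step is exact. A short exact sequence $0\to M_n'\to M_n\to M_n''\to 0$ in $\bbggO^{\gfrak_n}_{\bfrak_n}$ is in particular exact as a sequence of $\mathbb{K}$-vector spaces, and the prescribed $\pfrak_{n+1}$-action on each term extends the $\gfrak_n$-action by zero on the root vectors $x_\alpha$ with $\alpha\in\Phi^+_{\gfrak_{n+1},\bfrak_{n+1}}\setminus\Phi^+_{\gfrak_n,\bfrak_n}$. Any $\gfrak_n$-linear map automatically respects this extended action because both sides vanish on the new generators, so the resulting sequence is exact in $\Lmod{\pfrak_{n+1}}$.

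Next I would establish the key flatness statement. Since $\hfrak\subseteq\gfrak_n$, one has $\pfrak_{n+1}=\gfrak_n+\nfrak_{n+1}^+=\nfrak_n^-\oplus\hfrak\oplus\nfrak_{n+1}^+$, while $\gfrak_{n+1}=\nfrak_{n+1}^-\oplus\hfrak\oplus\nfrak_{n+1}^+$. The (finite-dimensional) subspace $\mathfrak{m}$ spanned by the root vectors $x_{-\alpha}$ for $\alpha\in\Phi^+_{\gfrak_{n+1},\bfrak_{n+1}}\setminus\Phi^+_{\gfrak_n,\bfrak_n}$ gives a vector-space complement, so $\gfrak_{n+1}=\mathfrak{m}\oplus\pfrak_{n+1}$. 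By the PBW theorem applied to this decomposition, multiplication induces an isomorphism
\begin{align}
\Ulie(\mathfrak{m})\otimes_{\mathbb{K}}\Ulie(\pfrak_{n+1})\;\xrightarrow{\;\sim\;}\;\Ulie(\gfrak_{n+1})
\end{align}
of right $\Ulie(\pfrak_{n+1})$-modules, so $\Ulie(\gfrak_{n+1})$ is free as a right $\Ulie(\pfrak_{n+1})$-module.

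Combining the two steps, the composite $M_n\mapsto \Ulie(\gfrak_{n+1})\otimes_{\Ulie(\pfrak_{n+1})}M_n$ is a composition of exact functors (exact restriction followed by tensoring with a flat module), hence exact; as the functor is already stated to land in $\bbggO^{\gfrak_{n+1}}_{\bfrak_{n+1}}$, this yields the claim. The potential subtlety is that $\gfrak_n$ and $\pfrak_{n+1}$ are infinite-dimensional whenever $\hfrak$ is, but this does not obstruct anything: only the complementary subspace $\mathfrak{m}$ needs to be finite-dimensional for the PBW argument to produce a clean basis of $\Ulie(\gfrak_{n+1})$ over $\Ulie(\pfrak_{n+1})$, and it is. So the real work is just invoking PBW for the pair $(\gfrak_{n+1},\pfrak_{n+1})$; I do not expect any serious obstacle.
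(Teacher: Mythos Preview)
Your proposal is correct and follows essentially the same approach as the paper: both arguments invoke the PBW theorem to conclude that $\Ulie(\gfrak_{n+1})$ is free (hence flat) as a right $\Ulie(\pfrak_{n+1})$-module, and exactness of $I_{n+1}$ follows immediately. Your version simply spells out the complementary subspace $\mathfrak{m}$ and the restriction-of-scalars step explicitly, whereas the paper's proof is a two-line sketch; one minor quibble is that the notation $\Ulie(\mathfrak{m})$ is slightly abusive unless you verify $\mathfrak{m}$ is a subalgebra, and your remark that $\mathfrak{m}$ must be finite-dimensional for PBW to apply is unnecessary, since PBW holds for arbitrary Lie algebras over a field.
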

	
	\begin{proof}
		The $\Ulie(\pfrak_{n+1})$-module $\Ulie(\gfrak_{n+1})$ is a free module due to the Poincar\'e-Birkhoff-Witt (PBW) Theorem.  Thus, $\Ulie(\gfrak_{n+1})$ is a flat $\Ulie(\pfrak_{n+1})$-module.
	\end{proof}
	
	\begin{rmk}
		Observe that the functor $I_{n+1}:\bbggO^{\gfrak_n}_{\bfrak_n}\rightsquigarrow\bbggO_{\bfrak_{n+1}}^{\gfrak_{n+1}}$ is right-adjoint to the forgetful functor $F_n:\bbggO^{\gfrak_{n+1}}_{\bfrak_{n+1}}\rightsquigarrow\bbggO_{\bfrak_n}^{\gfrak_n}$, and left-adjoint to $G_n:=\Hom_{\Ulie(\pfrak_{n+1})}\big(\Ulie(\gfrak_{n+1},\_\big):\bbggO^{\gfrak_{n+1}}_{\bfrak_{n+1}}\rightsquigarrow \bbggO^{\gfrak_n}_{\bfrak_n}$.  That is, 
		\begin{align}\Hom_{\bbggO^{\gfrak_{n+1}}_{\bfrak_{n+1}}}\left(M_{n+1},I_{n+1}M_n\right)\cong \Hom_{\bbggO^{\gfrak_{n}}_{\bfrak_{n}}}\left(F_nM_{n+1},M_n\right)\end{align} and  \begin{align}\Hom_{\bbggO^{\gfrak_{n+1}}_{\bfrak_{n+1}}}\left(I_{n+1}M_{n},M_{n+1}\right)\cong \Hom_{\bbggO^{\gfrak_{n}}_{\bfrak_{n}}}\left(M_{n},G_nM_{n+1}\right)\,,\end{align} for all $M_{n}\in\bbggO^{\gfrak_n}_{\bfrak_n}$ and $M_{n+1}\in\bbggO^{\gfrak_{n+1}}_{\bfrak_{n+1}}$. 
	\end{rmk}
	
	\begin{define}
		Fix $\lambda\in\hfrak^*$.  Define $R^\lambda_{n+1}:\bbggO^{\gfrak_{n+1}}_{\bfrak_{n+1}}\rightsquigarrow\bbggO^{\gfrak_{n+1}}_{\hfrak_{n+1}}[\lambda]$ to be the truncation functor, where for each $M_{n+1}\in\bbggO^{\gfrak_{n+1}}_{\bfrak_{n+1}}$, $R^\lambda_{n+1}M_{n+1}$ is the sum of all submodules $N_{n+1}$ of $M_{n+1}$ such that all composition factors of $N_{n+1}$ are not of the form $\Llie^{\gfrak_{n+1}}_{\bfrak_{n+1}}(\mu)$, with $\mu\in W_{\gfrak_n,\hfrak}\cdot\lambda$.
	\end{define}
	
	We have the following proposition.  The proof is trivial.
	
	\begin{prop}
		Fix $\lambda\in\hfrak^*$ and $n\in\Bbb Z_{>0}$.  
		\begin{enumerate}[(a)]
			\item The functor $R^\lambda_{n+1}:\bbggO^{\gfrak_{n+1}}_{\bfrak_{n+1}}\rightsquigarrow\bbggO^{\gfrak_{n+1}}_{\hfrak_{n+1}}[\lambda]$ is left-exact.
			\item The functor $R^\lambda_{n+1}I_{n+1}\operatorname{inj}^\lambda_{\gfrak_n,\bfrak_n}:\bbggO^{\gfrak_n}_{\bfrak_n}[\lambda]\to\bbggO^{\gfrak_{n+1}}_{\hfrak_{n+1}}[\lambda]$ is left-exact.
			\item The functor $R^\lambda_{n+1}I_{n+1}:\bbggO^{\gfrak_{n}}_{\bfrak_{n}}\rightsquigarrow\bbggO^{\gfrak_{n+1}}_{\hfrak_{n+1}}[\lambda]$ is left-exact.
		\end{enumerate}
	\end{prop}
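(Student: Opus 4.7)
The plan is to treat part (a) directly by recognising $R^\lambda_{n+1}$ as a subfunctor of the identity determined by a closure condition on composition factors, and then to deduce parts (b) and (c) by composing with previously established exact functors.

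For part (a), I would first confirm that $R^\lambda_{n+1}$ really is a subfunctor of the identity on $\bbggO^{\gfrak_{n+1}}_{\bfrak_{n+1}}$. By construction, $R^\lambda_{n+1} M_{n+1}$ is the largest submodule of $M_{n+1}$ whose composition factors avoid the set $\bigl\{\Llie^{\gfrak_{n+1}}_{\bfrak_{n+1}}(\mu) : \mu \in W_{\gfrak_n,\hfrak}\cdot\lambda\bigr\}$. This defining property is preserved under passage to submodules and under taking homomorphic images, since composition factors of a quotient are among those of the original module. Hence, for any morphism $f: M_{n+1}\to N_{n+1}$, the image $f\bigl(R^\lambda_{n+1} M_{n+1}\bigr)$ is a submodule of $N_{n+1}$ still satisfying the defining condition, and so lies inside $R^\lambda_{n+1} N_{n+1}$. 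This provides the arrow $R^\lambda_{n+1} f$.

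For left-exactness, given a short exact sequence $0 \to M' \xrightarrow{\iota} M \xrightarrow{\pi} M'' \to 0$ in $\bbggO^{\gfrak_{n+1}}_{\bfrak_{n+1}}$, injectivity of $R^\lambda_{n+1}\iota$ is immediate, and $R^\lambda_{n+1}\pi \circ R^\lambda_{n+1}\iota$ vanishes trivially. The one substantive point is the reverse inclusion $\ker\bigl(R^\lambda_{n+1}\pi\bigr)\subseteq \iota\bigl(R^\lambda_{n+1} M'\bigr)$. This follows because $\iota(M')\cap R^\lambda_{n+1}M$, viewed as a submodule of $M'\cong\iota(M')$, has composition factors drawn from those of $R^\lambda_{n+1} M$ and hence avoids the forbidden set; so it is contained in $\iota\bigl(R^\lambda_{n+1} M'\bigr)$, giving exactness at $R^\lambda_{n+1} M$.

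Parts (b) and (c) then follow formally: $I_{n+1}$ is exact by the immediately preceding proposition, and $\operatorname{inj}^\lambda_{\gfrak_n,\bfrak_n}$ is exact as noted earlier in the preliminaries. Both composites are therefore left-exact as a left-exact functor applied after exact functors. I expect no real obstacle here; the only substantive content in the whole argument is the observation that the property "composition factors lie outside a prescribed class of simples" descends to subobjects, which is presumably why the author records the proof as trivial.
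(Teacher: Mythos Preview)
Your argument is correct and is precisely the standard unpacking of why the author declares the proof trivial: the paper gives no proof beyond the sentence ``The proof is trivial,'' and your reasoning---that $R^\lambda_{n+1}$ is the maximal-subobject functor for a class of simples closed under subquotients, hence left-exact, with (b) and (c) following by composition with the exact functors $I_{n+1}$ and $\operatorname{inj}^\lambda_{\gfrak_n,\bfrak_n}$---is exactly what that remark encodes.
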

	
	\begin{define}
		Let $Q^\lambda_{n+1}:\bbggO^{\gfrak_n}_{\bfrak_n}[\lambda]\rightsquigarrow\bbggO^{\gfrak_{n+1}}_{\bfrak_{n+1}}[\lambda]$ be given by
		\begin{align}
			Q^\lambda_{n+1}M_n:=I_{n+1}\operatorname{inj}^\lambda_{\gfrak_n,\bfrak_n}M_n/R^\lambda_{n+1}I_{n+1}\operatorname{inj}^\lambda_{\gfrak_n,\bfrak_n}M_n
		\end{align}
		for all $M_n\in\bbggO^{\gfrak_n}_{\bfrak_n}[\lambda]$.
	\end{define}
	
	\begin{thm}
		The functor $Q^\lambda_{n+1}:\bbggO^{\gfrak_n}_{\bfrak_n}[\lambda]\rightsquigarrow\bbggO^{\gfrak_{n+1}}_{\bfrak_{n+1}}[\lambda]$ is an exact functor.
	\end{thm}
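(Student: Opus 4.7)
The plan is to reduce the exactness of $Q^\lambda_{n+1}$ to a single surjectivity statement via a $3\times 3$ diagram chase. Abbreviate $I:=I_{n+1}\circ\operatorname{inj}^\lambda_{\gfrak_n,\bfrak_n}$ and $R:=R^\lambda_{n+1}$. By the previous proposition $R$ is left-exact, and $I$ is exact as a composite of exact functors (the injection of a block direct summand followed by parabolic induction). Since $RN$ is canonically a submodule of $N$ for every $N\in\bbggO^{\gfrak_{n+1}}_{\bfrak_{n+1}}$, there is a short exact sequence of additive functors
\[
0\longrightarrow R\circ I\longrightarrow I\longrightarrow Q^\lambda_{n+1}\longrightarrow 0
\]
on $\bbggO^{\gfrak_n}_{\bfrak_n}[\lambda]$, so that $Q^\lambda_{n+1}$ is automatically right-exact.

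Now take a short exact sequence $0\to A\to B\to C\to 0$ in $\bbggO^{\gfrak_n}_{\bfrak_n}[\lambda]$. Applying $I$ yields an exact sequence $0\to IA\to IB\to IC\to 0$, and applying $R$ gives the left-exact sequence $0\to RIA\to RIB\to RIC$. A standard $3\times 3$ diagram chase, with the three defining columns $0\to RIX\to IX\to Q^\lambda_{n+1}X\to 0$ for $X\in\{A,B,C\}$ and the two rows above, shows that exactness of $0\to Q^\lambda_{n+1}A\to Q^\lambda_{n+1}B\to Q^\lambda_{n+1}C\to 0$ is equivalent to: (a) $RIA=IA\cap RIB$ as submodules of $IB$; and (b) the induced map $RIB\to RIC$ is surjective.

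Claim (a) is immediate from the definition of $R$: any submodule of $IA$ whose composition factors all avoid the orbit $W_{\gfrak_n,\hfrak}\cdot\lambda$ is also such a submodule when viewed inside $IB$, hence sits in $RIB$; the reverse inclusion is trivial.

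Claim (b) is the crux, and the main obstacle. My plan is to exploit the Poincar\'e--Birkhoff--Witt identification $IM\cong\mathfrak{U}(\nfrak_{n+1}^{-,\mathrm{ex}})\otimes_{\mathbb{K}} M$ as $\hfrak$-graded vector spaces, where $\nfrak_{n+1}^{-,\mathrm{ex}}$ denotes the sum of the negative $\bfrak_{n+1}$-root spaces lying outside $\gfrak_n$. Under this identification, the surjection $IB\twoheadrightarrow IC$ is $\mathrm{id}\otimes(B\twoheadrightarrow C)$, which splits weight-by-weight on the level of $\hfrak$-graded vector spaces. The submodule $RIC$ is generated by its $\hfrak$-weight vectors at weights outside $W_{\gfrak_n,\hfrak}\cdot\lambda$; lifting such generators to the corresponding weight spaces of $IB$ and verifying by a transfinite induction on the weight filtration that the $\gfrak_{n+1}$-submodule of $IB$ they generate still has only composition factors outside $W_{\gfrak_n,\hfrak}\cdot\lambda$ will yield the surjectivity. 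This lifting and structural verification is the only non-formal step in the proof; everything else is bookkeeping with the exact sequence of functors $R\circ I\hookrightarrow I\twoheadrightarrow Q^\lambda_{n+1}$.
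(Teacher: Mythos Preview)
Your reduction to the two claims (a) and (b) is correct, and (a) is indeed immediate. However, there are two problems.

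First, a minor one: your assertion that $Q^\lambda_{n+1}$ is \emph{automatically} right-exact from the short exact sequence of functors $0\to R\circ I\to I\to Q^\lambda_{n+1}\to 0$ is not right. What is automatic is only the surjectivity of $Q^\lambda_{n+1}B\to Q^\lambda_{n+1}C$; exactness at the middle term is precisely your claim (b). Indeed, with $\pi\colon IB\twoheadrightarrow IC$ one computes $\ker\big(Q^\lambda_{n+1}B\to Q^\lambda_{n+1}C\big)=\pi^{-1}(RIC)/RIB$ while $\operatorname{im}\big(Q^\lambda_{n+1}A\to Q^\lambda_{n+1}B\big)=(IA+RIB)/RIB$, and these coincide if and only if $\pi(RIB)=RIC$.

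Second, and this is the real gap: you do not prove (b). You identify it as ``the only non-formal step'' and then sketch a plan---lift weight-vector generators of $RIC$ to the corresponding weight spaces of $IB$ and argue that the $\gfrak_{n+1}$-submodule they generate still avoids the orbit $W_{\gfrak_n,\hfrak}\cdot\lambda$. But there is no reason this should work: a lift of a highest-weight vector in $RIC$ need not be a highest-weight vector in $IB$, and the $\gfrak_{n+1}$-submodule it generates can very well acquire new composition factors with highest weights in the forbidden orbit. The ``transfinite induction on the weight filtration'' you invoke is not specified, and it is not clear what invariant would be preserved along it. As written, the proposal stops exactly at the point where the actual work begins.

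The paper takes a different route that sidesteps (b) entirely. It exploits the key finiteness fact that every object of $\bbggO^{\gfrak_n}_{\bfrak_n}[\lambda]$ has finite length (the orbit $W_{\gfrak_n,\hfrak}\cdot\lambda$ is finite), and inducts on the length of the cokernel $C$. In the base case $C=\Llie^{\gfrak_n}_{\bfrak_n}(\mu)$ is simple and $Q^\lambda_{n+1}C=\Llie^{\gfrak_{n+1}}_{\bfrak_{n+1}}(\mu)$; one then rules out $Q^\lambda_{n+1}A\cong Q^\lambda_{n+1}B$ by comparing the maximal $d$ for which $L^{\oplus d}$ is a quotient of $A$ versus $B$, using the concrete PBW description of the $\mu$-weight space of $I_{n+1}(-)$. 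The inductive step is a formal character count. If you want to salvage your approach, you would need an honest argument for (b); otherwise, the finite-length induction is the path of least resistance.
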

	
	\begin{proof}
		Fix an exact sequence $0\to N_n\to M_n\to K_n\to 0$ of objects in $\bbggO^{\gfrak_n}_{\bfrak_n}[\lambda]$.  Because $W_{\gfrak_n,\hfrak}\cdot\lambda$ is finite, the length $k$ of the $\gfrak_n$-module $K_n$ is finite.  We shall prove by induction on $k$.
		
		For $k=0$, there is nothing to prove.  For $k=1$, we see that $K_n$ is a simple $\gfrak_n$-module.  Therefore, $K_n\cong\Llie^{\gfrak_n}_{\bfrak_n}(\mu)$ for some $\mu\in W_{\gfrak_n,\hfrak}\cdot\lambda$.  It can be easily seen that $Q_{n+1}^\lambda\Llie^{\gfrak_n}_{\bfrak_n}=\Llie^{\gfrak_{n+1}}_{\bfrak_{n+1}}(\mu)$.  Consider two exact sequences of $\gfrak_{n+1}$-modules:
		\begin{align}
			0\to I_{n+1}N_n\to I_{n+1}M_n\to I_{n+1}K_n\to 0
		\end{align}
		and
		\begin{align}
			0\to R^\lambda_{n+1}I_{n+1}\operatorname{inj}^\lambda_{\gfrak_n,\bfrak_n}N_n\to R^\lambda_{n+1}I_{n+1}\operatorname{inj}^\lambda_{\gfrak_n,\bfrak_n}M_n\to R^\lambda_{n+1}I_{n+1}\operatorname{inj}^\lambda_{\gfrak_n,\bfrak_n}K_n\,.
		\end{align}
		By definition of $Q^\lambda_{n+1}$, we obtain the following exact sequence
		\begin{align}
			0\to Q^\lambda_{n+1}N_n\to Q^\lambda_{n+1}M_n\to Q^\lambda_{n+1}K_n\,.
			\label{eq:exactly}
		\end{align}
		Because $Q^\lambda_{n+1}K_n$ is simple, either $Q^\lambda_{n+1}N_n\cong Q^\lambda_{n+1}M_n$ or the sequence
		\begin{align}
			0\to Q^\lambda_{n+1}N_n\to Q^\lambda_{n+1}M_n\to Q^\lambda_{n+1}K_n\to 0
			\label{eq:exactQ}
		\end{align}
		must be exact.
		
		Write $L_n:=\Llie^{\gfrak_n}_{\bfrak_n}(\mu)$ and $L_{n+1}:=\Llie^{\gfrak_{n+1}}_{\bfrak_{n+1}}(\mu)$.  Let $d$ denote the largest possible nonnegative integer such that there exists a quotient of $N_n$ isomorphic to $L_n^{\oplus d}$.  Then, $d+1$ is the largest possible nonnegative integer such that there exists a quotient of $M_n$ isomorphic to  $L_n^{\oplus(d+1)}$.  Then, there exists a $\gfrak_n$-submodule $Y_n$ of $N_n$ such that $N_n/Y_n\cong L_n^{\oplus d}$.  We claim that $Q^\lambda_{n+1}N_n/Q^\lambda_{n+1}Y_n\cong L_{n+1}^{\oplus d}$.
		
		As before, we have an exact sequence $0\to Q^\lambda_{n+1}Y_n\to Q^\lambda_{n+1}N_n\to L_{n+1}^{\oplus d}$.  This implies that $Q^\lambda_{n+1}N_n/Q^\lambda_{n+1}Y_n\cong L_{n+1}^{\oplus t}$ for some integer $t$ such that $0\le t\le d$.  If $t<d$, then $Q_{n+1}^\lambda Y_n$ has $L_{n+1}$ as a quotient.  Hence, $I_{n+1}Y_n$ has $L_{n+1}$ as a quotient.  Since $(I_{n+1}Y_n)^{\mu}$ is spanned by $1_{\Ulie(\gfrak_{n+1})}\underset{\Ulie(\pfrak_{n+1})}\otimes y_n$, where $y_n\in Y_n^\mu$, and by the action of $\pfrak_{n+1}$ on $Y_n$, we conclude that $L_{n}$ must also be a quotient of $Y_n$.  This contradict the definition of $d$.
		
		By a similar argument, if $X_n$ is a $\gfrak_n$-submodule of $M_n$ such that $M_n/X_n\cong L_n^{\oplus (d+1)}$, then $Q^\lambda_{n+1}M_n/Q^\lambda_{n+1}X_n\cong L_{n+1}^{\oplus (d+1)}$.  Thus, $Q^\lambda_{n+1}N_n\cong Q^\lambda_{n+1}M_n$ cannot hold.  Therefore, we must have an exact sequence~\eqref{eq:exactQ}.  
		
		Suppose now that $k>1$.  Consider two exact sequences of $\gfrak_n$-modules: $0\to N_n\to M_n\to K_n\to 0$ and $0\to Z_n\to K_n\to L_n\to 0$ for some simple object $L_n\in\bbggO^{\gfrak_n}_{\bfrak_n}[\lambda]$ and for some $\gfrak_n$-submodule $X_n$ of $K_n$.  Ergo, we can find exact sequences $0\to U_n\to M_n\to L_n\to 0$, $0\to N_n\to U_n\to Z_n\to 0$, where $U_n$ is a $\gfrak_n$-submodule of $M_n$.  By induction hypothesis,
		\begin{align}
			0\to Q^\lambda_{n+1}Z_n\to Q^\lambda_{n+1}K_n\to Q^\lambda_{n+1}L_n\to 0\,,
		\end{align}
		\begin{align}
			0\to Q^\lambda_{n+1}U_n\to Q^\lambda_{n+1}M_n\to Q^\lambda_{n+1}L_n\to 0\,,
		\end{align}
		and
		\begin{align}
			0\to Q^\lambda_{n+1}N_n\to Q^\lambda_{n+1}U_n\to Q^\lambda_{n+1}Z_n\to 0
		\end{align}
		are exact sequences.  Therefore,
		\begin{align}
			\ch\left(Q^\lambda_{n+1}M_n\right)&=\ch\left(Q^\lambda_{n+1}U_n\right)+\ch\left(Q^\lambda_{n+1}L_n\right)\nonumber\\&=\Bigg(\ch\left(Q^\lambda_{n+1}N_n\right)+\ch\left(Q^\lambda_{n+1}Z_n\right)\Bigg)+\ch\left(Q^\lambda_{n+1}L_n\right)\nonumber
			\\&=\ch\left(Q^\lambda_{n+1}N_n\right)+\Bigg(\ch\left(Q^\lambda_{n+1}Z_n\right)+\ch\left(Q^\lambda_{n+1}L_n\right)\Bigg)\nonumber\\&=\ch\left(Q^\lambda_{n+1}N_n\right)+\ch\left(Q^\lambda_{n+1}K_n\right)\,.
		\end{align}
		By \eqref{eq:exactly}, we conclude that $0\to Q^\lambda_{n+1}N_n\to  Q^\lambda_{n+1}M_n\to  Q^\lambda_{n+1}K_n\to 0$ must be an exact sequence.  The proof is now complete.
	\end{proof}
	
	\begin{cor}
		The functor $Q^{\lambda}_{n+1}$ is an equivalence between $\bbggO^{\gfrak_n}_{\bfrak_n}[\lambda]$ and the image $Q^{\lambda}_{n+1}\bbggO^{\gfrak_n}_{\bfrak_n}[\lambda]$.  More specifically, for any $\mu\in[\lambda]$ and $M_n\in\bbggO^{\gfrak_n}_{\bfrak_n}[\lambda]$, we have
		\begin{align}
			\left[M_n:\Llie^{\gfrak_n}_{\bfrak_n}(\mu)\right]=\left[Q_{n+1}^\lambda M_n:\Llie^{\gfrak_{n+1}}_{\bfrak_{n+1}}(\mu)\right]\,.
		\end{align}
		Furthermore, $Q_{n+1}^\lambda$ preserves the length of every object.
		\label{cor:preservation}
	\end{cor}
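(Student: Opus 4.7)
The proof naturally divides into two stages that build on each other. Stage one establishes the multiplicity formula and length preservation, and stage two upgrades these to an equivalence of categories. The base point for both stages is the observation, already extracted inside the proof of the preceding theorem, that $Q^\lambda_{n+1} \Llie^{\gfrak_n}_{\bfrak_n}(\mu) \cong \Llie^{\gfrak_{n+1}}_{\bfrak_{n+1}}(\mu)$ for every $\mu \in W_{\gfrak_n,\hfrak}\cdot\lambda$.

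\textbf{Stage one (multiplicities and length).} Since $\dot{\gfrak}_n$ is finite dimensional, $[\gfrak_n,\gfrak_n]\subseteq \dot{\gfrak}_n$ is finite dimensional too, so $\bbggO^{\gfrak_n}_{\bfrak_n}[\lambda]$ reduces to the classical BGG block $\bggO^{\gfrak_n}_{\bfrak_n}[\lambda]$, in which every object has finite length. I would induct on $\ell(M_n)$. The base case $\ell(M_n)=1$ is exactly the simples identification recalled above. For the inductive step, pick a simple submodule (or quotient) and form a short exact sequence $0\to N_n \to M_n \to \Llie^{\gfrak_n}_{\bfrak_n}(\nu)\to 0$ with $\ell(N_n)<\ell(M_n)$; applying the exact functor $Q^\lambda_{n+1}$ (from the preceding theorem) gives $0\to Q^\lambda_{n+1}N_n\to Q^\lambda_{n+1}M_n\to \Llie^{\gfrak_{n+1}}_{\bfrak_{n+1}}(\nu)\to 0$, and additivity of composition-factor multiplicities together with the inductive hypothesis settles the formula. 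Length preservation follows by summing over $\mu\in[\lambda]$.

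\textbf{Stage two (equivalence onto the image).} Faithfulness is immediate from stage one: if $f:M_n\to M_n'$ is nonzero, then $\im f$ has positive length, so $Q^\lambda_{n+1}(\im f)$ has the same positive length, and thus $Q^\lambda_{n+1}(f)\neq 0$. For fullness, I would invoke the adjunction $I_{n+1}\dashv F_n$ noted in the remark after the exactness of $I_{n+1}$. Because $Q^\lambda_{n+1}M_n'$ lies in a single block of $\bbggO^{\gfrak_{n+1}}_{\bfrak_{n+1}}$ whose simples are disjoint from those appearing in $R^\lambda_{n+1} I_{n+1} M_n$, the block-orthogonality of $\Hom$ forces every morphism $I_{n+1}M_n\to Q^\lambda_{n+1}M_n'$ to annihilate the truncation. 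Hence
\[
\Hom_{\gfrak_{n+1}}\!\left(Q^\lambda_{n+1}M_n,\,Q^\lambda_{n+1}M_n'\right)\;\cong\;\Hom_{\gfrak_{n+1}}\!\left(I_{n+1}M_n,\,Q^\lambda_{n+1}M_n'\right)\;\cong\;\Hom_{\gfrak_n}\!\left(M_n,\,F_n Q^\lambda_{n+1}M_n'\right),
\]
and the task reduces to identifying the right-hand side with $\Hom_{\gfrak_n}(M_n,M_n')$ via the canonical map $M_n'\hookrightarrow F_n Q^\lambda_{n+1}M_n'$ sending $m\mapsto 1\otimes m$ modulo the truncation.

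\textbf{Main obstacle.} The crux is the last identification. As a $\gfrak_n$-module, $F_n Q^\lambda_{n+1}M_n'$ contains the canonical copy of $M_n'$ together with additional $\gfrak_n$-composition factors produced by the parabolic induction. I would argue by $\hfrak$-weight analysis: the weights of $M_n$ lie in $\Pi(M_n)\subseteq [\lambda]_{\gfrak_n}+\mathbb{Z}_{\le 0}\Sigma^+_{\gfrak_n,\bfrak_n}$, while the extraneous composition factors of $F_n Q^\lambda_{n+1}M_n'$, being of the form $\Llie^{\gfrak_n}_{\bfrak_n}(\mu)$ with $\mu\in W_{\gfrak_n,\hfrak}\cdot\lambda\setminus[\lambda]_{\gfrak_n}$, lie in disjoint $\hfrak$-coset classes modulo $\Lambda_{\gfrak_n,\hfrak}$. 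Consequently any $\gfrak_n$-morphism from $M_n$ must factor through the canonical copy of $M_n'$, giving the desired bijection. Verifying this weight-theoretic separation, using the precise description of the truncation $R^\lambda_{n+1}$ and the PBW decomposition $I_{n+1}M_n'=\Ulie(\nfrak^-_{n+1}/\nfrak^-_n)\otimes M_n'$, is the main technical step of the proof.
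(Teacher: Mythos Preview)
Your Stage one is exactly what the paper intends: the corollary is stated without proof, as an immediate consequence of the exactness theorem together with the observation (made inside that proof) that $Q^\lambda_{n+1}\Llie^{\gfrak_n}_{\bfrak_n}(\mu)\cong\Llie^{\gfrak_{n+1}}_{\bfrak_{n+1}}(\mu)$; your length induction spells this out correctly.

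For the equivalence claim your Stage two takes a genuinely different route from the paper. The paper does \emph{not} argue fullness and faithfulness abstractly; instead, in the propositions immediately following the corollary it builds an explicit inverse $\hat{Q}^\lambda_n$ by projecting PBW monomials in $\Ulie(\gfrak_{n+1})$ onto $\Ulie(\gfrak_n)$, and deduces the $\Hom$ isomorphism from that construction together with a universal property of $Q^\lambda_{n+1}$. Your adjunction-plus-weight-separation argument is a legitimate alternative, and arguably cleaner as a standalone proof; the paper's explicit inverse, on the other hand, is reused later in the translation-functor construction, which is presumably why the author prefers it.

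Two small corrections to your Stage two. First, the adjunction for parabolic induction is $\Hom_{\gfrak_{n+1}}(I_{n+1}M_n,N)\cong\Hom_{\pfrak_{n+1}}(M_n,N)$, not $\Hom_{\gfrak_n}(M_n,F_nN)$: a $\pfrak_{n+1}$-map must land in the subspace annihilated by the root spaces $\gfrak^\alpha$ with $\alpha\in\Phi^+_{n+1}\setminus\Phi^+_n$. Second, your description of the ``extraneous'' factors as having highest weight in $W_{\gfrak_n,\hfrak}\cdot\lambda\setminus[\lambda]_{\gfrak_n}$ is garbled, since that set is empty. What you actually want, and what makes the argument work, is purely a weight-coset statement: every weight of $M_n$ lies in $\lambda+\Lambda_{\gfrak_n,\hfrak}$, while the PBW decomposition $I_{n+1}M'_n\cong\Ulie(\mathfrak{m}^-)\otimes M'_n$ (with $\mathfrak{m}^-=\bigoplus_{\alpha\in\Phi^-_{n+1}\setminus\Phi^-_n}\gfrak^\alpha$) shows that the only weight spaces of $I_{n+1}M'_n$, hence of $Q^\lambda_{n+1}M'_n$, lying in $\lambda+\Lambda_{\gfrak_n,\hfrak}$ are those of $1\otimes M'_n$. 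Thus every $\hfrak$-equivariant map---a fortiori every $\gfrak_n$- or $\pfrak_{n+1}$-map---from $M_n$ into $Q^\lambda_{n+1}M'_n$ already lands in $\iota_{M'_n}(M'_n)$, and your slip in the adjunction is harmless.
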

	
	\begin{prop}
		For every $M_n\in\bbggO^{\gfrak_n}_{\bfrak_n}[\lambda]$, there exists an injective $\gfrak_n$-module homomorphism $\iota_{M_n}:M_n\to Q^\lambda_{n+1}M_n$ such that, for all objects $M_n,N_n\in\bbggO^{\gfrak_n}_{\bfrak_n}[\lambda]$ along with a $\gfrak_n$-module homomorphism $f_n:M_n\to N_n$, the following diagram is commutative:
		\begin{equation}\begin{tikzcd}[column sep=large]
M_n\arrow{r}{f_n} \arrow[swap,dashed]{d}{\iota_{M_n}} & N_n \arrow[dashed]{d}{\iota_{N_n}} \\%
Q^\lambda_{n+1}M_n \arrow[swap]{r}{Q^\lambda_{n+1}f_n}& Q^\lambda_{n+1}N_n\,.
\end{tikzcd}
\end{equation}
	\label{prop:nattransQ}
	\end{prop}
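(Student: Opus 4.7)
The plan is to define the map as the canonical composition
\begin{align*}
\iota_{M_n}:\;M_n\;\xrightarrow{\;\eta_{M_n}\;}\;I_{n+1}M_n\;\twoheadrightarrow\;Q^\lambda_{n+1}M_n,
\end{align*}
where $\eta_{M_n}:m\mapsto 1\otimes m$ is the unit of the adjunction $(I_{n+1},F_n)$ from the earlier remark, and the second arrow is the canonical projection defining $Q^\lambda_{n+1}M_n$. Since $\eta_{M_n}$ is a $\pfrak_{n+1}$-module map and $\gfrak_n\subseteq\pfrak_{n+1}$, the composition is automatically a $\gfrak_n$-module map. The naturality square is then essentially free: $\eta$ is a natural transformation $\idmap\Rightarrow F_nI_{n+1}$ by the adjunction, and the projection $I_{n+1}(\_)\twoheadrightarrow Q^\lambda_{n+1}(\_)$ is natural because $R^\lambda_{n+1}$ is a subfunctor of the identity on $\bbggO^{\gfrak_{n+1}}_{\bfrak_{n+1}}$.

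The substance of the proposition is injectivity, which I would prove by contradiction. Assume $K:=\ker\iota_{M_n}$ is nonzero, pick a maximal weight $\nu$ of $K$, and choose $0\ne y\in K^\nu$. Since $K$ is a $\gfrak_n$-submodule and $\nu$ is maximal in $K$, the vector $y$ is annihilated by $\nfrak^+_n$. The positive root vectors in $\nfrak^+_{n+1}\setminus\nfrak^+_n$ already act trivially on $M_n$ by the definition of $I_{n+1}$, so $1\otimes y\in I_{n+1}M_n$ is annihilated by all of $\nfrak^+_{n+1}$ and is therefore a $\bfrak_{n+1}$-highest weight vector of weight $\nu$. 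Consequently the cyclic $\gfrak_{n+1}$-submodule $\Ulie(\gfrak_{n+1})\cdot(1\otimes y)$ is a highest weight module with unique simple quotient $\Llie^{\gfrak_{n+1}}_{\bfrak_{n+1}}(\nu)$.

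The hypothesis $y\in K$ forces $1\otimes y\in R^\lambda_{n+1}I_{n+1}M_n$, so this entire cyclic submodule lies inside $R^\lambda_{n+1}I_{n+1}M_n$ and $\Llie^{\gfrak_{n+1}}_{\bfrak_{n+1}}(\nu)$ is a composition factor thereof; the defining property of $R^\lambda_{n+1}$ then demands $\nu\notin W_{\gfrak_n,\hfrak}\cdot\lambda$. On the other hand, $\Ulie(\gfrak_n)\cdot y$ is a highest weight $\gfrak_n$-module inside $M_n$ with simple quotient $\Llie^{\gfrak_n}_{\bfrak_n}(\nu)$, and the block hypothesis $M_n\in\bbggO^{\gfrak_n}_{\bfrak_n}[\lambda]$ forces $\nu\in W_{\gfrak_n,\hfrak}\cdot\lambda$. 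The contradiction proves $K=0$. The only real obstacle is locating the $\bfrak_{n+1}$-highest weight vector $1\otimes y$ of the correct weight inside the presumed kernel; once that is in place, the definition of $R^\lambda_{n+1}$ and the block hypothesis on $M_n$ collide immediately.
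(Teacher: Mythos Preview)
Your definition of $\iota_{M_n}$ is exactly the paper's: the composite of $m\mapsto 1\otimes m$ with the canonical projection onto $Q^\lambda_{n+1}M_n$. The paper simply declares that ``it is easy to see that $\iota_{M_n}$ satisfies the requirement,'' whereas you supply the details---naturality via the unit of adjunction and the subfunctor property of $R^\lambda_{n+1}$, and injectivity via the highest-weight-vector argument forcing $\nu\in W_{\gfrak_n,\hfrak}\cdot\lambda$ and $\nu\notin W_{\gfrak_n,\hfrak}\cdot\lambda$ simultaneously---so your proof is a correct and strictly more complete version of the same approach.
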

	\begin{proof}
		For each $v\in M_n$, we define $\iota_{M_n}(v):=\left(1_{\Ulie(\gfrak_{n+1})}\underset{\Ulie(\pfrak_{n+1})}{\otimes} v\right)+R^\lambda_{n+1}I_{n+1}\operatorname{inj}^\lambda_{\gfrak_n,\bfrak_n}M_n\in Q^\lambda_{n+1}M_n$.  It is easy to see that $\iota_{M_n}$ satisfies the requirement.
	\end{proof}

	\begin{prop}
		Let $M_n\in\bbggO^{\gfrak_n}_{\bfrak_{n}}[\lambda]$ and $N_{n+1}\in\bbggO^{\gfrak_{n+1}}_{\bfrak_{n+1}}[\lambda]$.  Suppose that all composition factors of $N_{n+1}$ take the form $\Llie^{\gfrak_{n+1}}_{\bfrak_{n+1}}(\mu)$ with $\mu\in W_{\gfrak_n,\hfrak}\cdot\lambda$.  If $f:M_n\to N_{n+1}$ is a $\gfrak_n$-module homomorphism, then there exists a unique $\gfrak_{n+1}$-module homomorphism $\tilde{f}:Q^\lambda_{n+1}M_n\to N_{n+1}$ such that the following diagram is commutative:
		\begin{equation}\begin{tikzcd}
M_n\arrow{r}{f} \arrow[swap]{d}{\iota_{M_n}} & N_{n+1} \\%
Q^\lambda_{n+1}M_n \arrow[swap,dashed]{ur}{\tilde{f}}\,.
\end{tikzcd}
\end{equation}
\end{prop}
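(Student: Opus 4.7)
The uniqueness of $\tilde{f}$ is immediate: since $I_{n+1}M_n$ is generated as a $\gfrak_{n+1}$-module by $1\otimes M_n$, its quotient $Q^\lambda_{n+1}M_n$ is generated as a $\gfrak_{n+1}$-module by $\iota_{M_n}(M_n)$, so the condition $\tilde{f}\circ\iota_{M_n}=f$ determines $\tilde{f}$ on a generating set. The plan for existence is to first construct a $\gfrak_{n+1}$-module homomorphism $\hat{f}:I_{n+1}M_n\to N_{n+1}$ lifting $f$ and then factor it through the canonical quotient.

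To build $\hat{f}$ I would use the standard tensor-hom adjunction
\begin{align*}
\Hom_{\gfrak_{n+1}}\bigl(I_{n+1}M_n,N_{n+1}\bigr)\cong\Hom_{\pfrak_{n+1}}\bigl(M_n,N_{n+1}\bigr),
\end{align*}
where $M_n$ carries the trivial $\pfrak_{n+1}$-module structure specified just before the definition of $I_{n+1}$. Because $f$ is already $\gfrak_n$-linear, the only further condition needed for $\pfrak_{n+1}$-linearity is that $x_\alpha\cdot f(v)=0$ in $N_{n+1}$ for every weight vector $v\in M_n$ and every $\alpha\in\Phi^+_{\gfrak_{n+1},\bfrak_{n+1}}\setminus\Phi^+_{\gfrak_n,\bfrak_n}$. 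This vanishing is the core technical point of the proof.

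I would prove the vanishing by a weight argument. Let $\nu$ be the weight of $v$, and fix $\beta_0\in\Sigma^+_{\gfrak_{n+1},\bfrak_{n+1}}\setminus\Sigma^+_{\gfrak_n,\bfrak_n}$ that appears with strictly positive coefficient in $\alpha$ (which exists since $\alpha\notin\Phi^+_{\gfrak_n,\bfrak_n}$). Because $M_n\in\bbggO^{\gfrak_n}_{\bfrak_n}[\lambda]$, one has $\mu\in W_{\gfrak_n,\hfrak}\cdot\lambda$ with $\mu-\lambda\in\Lambda_{\gfrak_n,\hfrak}$ and $\mu-\nu\in\mathbb{Z}_{\geq0}\Sigma^+_{\gfrak_n,\bfrak_n}$. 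If $x_\alpha\cdot f(v)$ were nonzero, then $\nu+\alpha\in\Pi(N_{n+1})$, so the hypothesis on $N_{n+1}$ would furnish $\mu'\in W_{\gfrak_n,\hfrak}\cdot\lambda$ with $\mu'-\lambda\in\Lambda_{\gfrak_{n+1},\hfrak}$ and $\mu'-(\nu+\alpha)\in\mathbb{Z}_{\geq0}\Sigma^+_{\gfrak_{n+1},\bfrak_{n+1}}$. Since every element of $W_{\gfrak_n,\hfrak}$ moves $\lambda+\rho$ only inside $\mathbb{K}\Sigma_{\gfrak_n,\bfrak_n}$, and the simple roots of $\gfrak_{n+1}$ are linearly independent, each of $\mu-\lambda$ and $\mu'-\lambda$ lies in $\mathbb{K}\Sigma_{\gfrak_n,\bfrak_n}\cap\Lambda_{\gfrak_{n+1},\hfrak}=\Lambda_{\gfrak_n,\hfrak}$, and hence $\mu'-\mu\in\mathbb{Z}\Sigma_{\gfrak_n,\bfrak_n}$. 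In particular, the coefficient of $\beta_0$ in $\mu'-\mu$ is zero. On the other hand, expanding $\mu'-\mu=(\mu'-\nu-\alpha)+\alpha-(\mu-\nu)$ in simple roots of $\gfrak_{n+1}$ and reading off the coefficient of $\beta_0$, we obtain a non-negative contribution from $\mu'-\nu-\alpha$, a strictly positive contribution from $\alpha$, and zero from $\mu-\nu$, so the total is strictly positive, a contradiction.

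With $\hat{f}$ at hand, the final step is to show that $\hat{f}$ annihilates $R^\lambda_{n+1}I_{n+1}M_n$, so that it descends to $\tilde{f}:Q^\lambda_{n+1}M_n\to N_{n+1}$. By the definition of $R^\lambda_{n+1}$, every composition factor of $R^\lambda_{n+1}I_{n+1}M_n$ is some $\Llie^{\gfrak_{n+1}}_{\bfrak_{n+1}}(\mu'')$ with $\mu''\notin W_{\gfrak_n,\hfrak}\cdot\lambda$, whereas every composition factor of $N_{n+1}$ satisfies $\mu''\in W_{\gfrak_n,\hfrak}\cdot\lambda$. The image of the restriction of $\hat{f}$ to $R^\lambda_{n+1}I_{n+1}M_n$ is simultaneously a quotient of the first module and a submodule of the second, so its set of composition factors is empty and the image is zero. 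The main obstacle of the proof is the weight analysis of the third paragraph; the rest is formal manipulation with the adjunction and with block decompositions.
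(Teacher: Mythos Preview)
Your proof is correct and follows the same underlying route as the paper: produce a $\gfrak_{n+1}$-map $\hat{f}:I_{n+1}M_n\to N_{n+1}$ from $f$ via the universal property of parabolic induction, and then observe that $\hat{f}$ kills $R^\lambda_{n+1}I_{n+1}M_n$ because any nonzero image would have to share a composition factor with both $K_{n+1}$ and $N_{n+1}$, which is impossible by the hypothesis on $N_{n+1}$.

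The one substantive difference is your third paragraph. The paper simply writes down the formula $(u\otimes_{\Ulie(\pfrak_{n+1})}v)+K_{n+1}\mapsto u\cdot f(v)$ and then argues about composition factors of the submodule generated by $z=\sum_j u_j\cdot f(v_j)$; that argument tacitly assumes the formula already gives a well-defined $\gfrak_{n+1}$-map on $I_{n+1}M_n$, i.e.\ that $f$ is $\pfrak_{n+1}$-linear. You make this step explicit with a clean weight computation showing $x_\alpha\cdot f(v)=0$ for $\alpha\in\Phi^+_{\gfrak_{n+1},\bfrak_{n+1}}\setminus\Phi^+_{\gfrak_n,\bfrak_n}$, using that the Dynkin condition forces $\Sigma^+_{\gfrak_n,\bfrak_n}\subseteq\Sigma^+_{\gfrak_{n+1},\bfrak_{n+1}}$ and that $W_{\gfrak_n,\hfrak}\cdot\lambda-\lambda\subseteq\mathbb{K}\Sigma_{\gfrak_n,\bfrak_n}$. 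So your argument is strictly more complete on this point while the overall strategy is the same.
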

	
	\begin{proof}
		For each $u\in\Ulie(\gfrak_{n+1})$ and  $v\in M_n$, let the map $\tilde{f}$ send $\left(u\underset{\Ulie(\pfrak_{n+1})}{\otimes}v\right) +K_{n+1} \in Q^\lambda_{n+1}M_n$,  $K_{n+1}:=R^\lambda_{n+1}I_{n+1}\operatorname{inj}^\lambda_{\gfrak_n,\bfrak_n}M_n$, to $u\cdot f(v)\in N_{n+1}$.  Then, extend $\tilde{f}$ by linearity.  
		
		We claim that $\tilde{f}$ is a well defined homomorphism of $\gfrak_{n+1}$-modules.  Suppose that $u_1,u_2,\ldots,u_k$ are elements of $\Ulie(\gfrak_{n+1})$ and $v_1,v_2,\ldots,v_k$ are vectors in $M_n$ such that $\sum_{j=1}^k\,\left(u_j\underset{\Ulie(\pfrak_{n+1})}{\otimes} v_j\right) \in K_{n+1}$.  We want to prove that $\sum_{j=1}^k\,u_j\cdot f(v_j)=0$.   Write $z:=\sum_{j=1}^k\,u_j\cdot f(v_j)$. 
		
		The $\gfrak_{n+1}$-submodule $Z_{n+1}$ of $N_{n+1}$ generated by $z$ cannot have a composition factor of the form $\Llie^{\gfrak_{n+1}}_{\bfrak_{n+1}}(\xi)$ with $\xi\in W_{\gfrak_{n},\hfrak}\cdot\lambda$.  However, since all composition factors $N_{n+1}$ are of the form $\Llie^{\gfrak_{n+1}}_{\bfrak_{n+1}}(\mu)$ with $\mu\in W_{\gfrak_n,\hfrak}\cdot\lambda$, we conclude that $Z_{n+1}=0$.  Thus, $z=0$.
	\end{proof}
	
	\begin{prop}
		Let $M_n,N_n\in\bbggO^{\gfrak_n}_{\bfrak_n}[\lambda]$ and $f_{n+1}:Q^\lambda_{n+1}M_n\to Q^\lambda_{n+1}N_n$ be given.  Then, there exists a $\gfrak_n$-module homomorphism $\hat{Q}_n^\lambda f_{n+1}:M_n\to N_n$ such that the following diagram is commutative:
		\begin{equation}\begin{tikzcd}[column sep=large]
M_n\arrow[dashed]{r}{\hat{Q}_n^\lambda f_{n+1}} \arrow[swap]{d}{\iota_{M_n}} & N_n \arrow{d}{\iota_{N_n}} \\%
Q^\lambda_{n+1}M_n \arrow[swap]{r}{f_{n+1}}& Q^\lambda_{n+1}N_n\,.
\end{tikzcd}
\end{equation}
		For $M_{n+1}\in \im\left(Q^\lambda_{n+1}\right)$, suppose that $M_{n+1}=Q^\lambda_{n+1}M_n$ for some $M_n\in\bbggO^{\gfrak_n}_{\bfrak_n}[\lambda]$.  Define the $\gfrak_n$-module $\hat{Q}^\lambda_{n}M_{n+1}\in\bbggO^{\gfrak_n}_{\bfrak_n}[\lambda]$ to be $M_n$ itself.  Then, $\hat{Q}^\lambda_n:\im\left(Q^\lambda_{n+1}\right)\rightsquigarrow \bbggO^{\gfrak_n}_{\bfrak_n}[\lambda]$ is the inverse equivalence of $Q^\lambda_n: \bbggO^{\gfrak_n}_{\bfrak_n}[\lambda]\rightsquigarrow\im\left(Q^\lambda_{n+1}\right)$.
	\label{prop:qhat}
	\end{prop}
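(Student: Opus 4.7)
The plan is to construct $\hat{Q}^\lambda_n f_{n+1}$ from a given $\gfrak_{n+1}$-module homomorphism $f_{n+1}\colon Q^\lambda_{n+1}M_n \to Q^\lambda_{n+1}N_n$ by first showing that $f_{n+1}\circ\iota_{M_n}$ has image inside $\iota_{N_n}(N_n)$, and then setting $\hat{Q}_n^\lambda f_{n+1} \defeq \iota_{N_n}^{-1} \circ f_{n+1}\circ\iota_{M_n}$, which makes sense because $\iota_{N_n}$ is injective by \Cref{prop:nattransQ}. Commutativity of the required diagram is then immediate from this definition.

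The heart of the matter is the containment $f_{n+1}\bigl(\iota_{M_n}(M_n)\bigr) \subseteq \iota_{N_n}(N_n)$. Here the key observation is that $M_n$ carries a $\pfrak_{n+1}$-module structure, namely the one used in the definition of $I_{n+1}M_n$, in which the positive root spaces $\gfrak^\alpha$ with $\alpha \in \Phi^+_{\gfrak_{n+1},\bfrak_{n+1}}\setminus\Phi^+_{\gfrak_n,\bfrak_n}$ act trivially; under this structure $\iota_{M_n}$ is actually $\pfrak_{n+1}$-equivariant. Consequently $\iota_{M_n}(M_n)$ is a $\pfrak_{n+1}$-submodule of $Q^\lambda_{n+1}M_n$ annihilated by these extra positive root spaces. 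Because $f_{n+1}$ is in particular $\pfrak_{n+1}$-equivariant, the image $f_{n+1}\bigl(\iota_{M_n}(M_n)\bigr)$ lies inside
$$V_{N_n} \defeq \bigl\{w \in Q^\lambda_{n+1}N_n \,\big|\, y \cdot w = 0 \text{ for all } y \in \gfrak^\alpha,\ \alpha \in \Phi^+_{\gfrak_{n+1},\bfrak_{n+1}}\setminus\Phi^+_{\gfrak_n,\bfrak_n}\bigr\}.$$

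The main obstacle is to verify $V_{N_n} = \iota_{N_n}(N_n)$. One inclusion is immediate from the definition of $\iota_{N_n}$. For the reverse inclusion I would use the PBW decomposition $I_{n+1}N_n \cong \Ulie(\nfrak^-_{n+1}/\nfrak^-_n) \otimes_{\mathbb{K}} N_n$ together with the defining property of $Q^\lambda_{n+1}N_n$ as the quotient of $I_{n+1}N_n$ by the sum of all submodules whose composition factors have highest weights outside $W_{\gfrak_n,\hfrak}\cdot\lambda$. A weight-homogeneous class in $Q^\lambda_{n+1}N_n$ of positive $\Ulie(\nfrak^-_{n+1}/\nfrak^-_n)$-degree that is invariant under the extra positive root spaces should be lifted to $I_{n+1}N_n$ and seen to generate a $\gfrak_{n+1}$-submodule whose highest-weight composition factor is indexed by a weight lying outside $W_{\gfrak_n,\hfrak}\cdot\lambda$; this submodule is then absorbed by $R^\lambda_{n+1}I_{n+1}N_n$ and vanishes in the quotient, forcing the invariant to come from $1\otimes N_n = \iota_{N_n}(N_n)$. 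Executing this block-and-weight bookkeeping carefully, using the finite-weight-space and locally $\nfrak^+_n$-finite structure of $N_n \in \bbggO^{\gfrak_n}_{\bfrak_n}[\lambda]$ from \cite{DVN1}, is the step that requires the most care.

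Once $\hat{Q}^\lambda_n f_{n+1}$ is in place, the remaining assertions are routine. Functoriality in $f_{n+1}$ is automatic from the definition through the injective map $\iota_{N_n}$. The identity $\hat{Q}^\lambda_n Q^\lambda_{n+1} g = g$ for any $g\colon M_n \to N_n$ follows by combining the naturality diagram of \Cref{prop:nattransQ} with the injectivity of $\iota_{N_n}$, while the identity $Q^\lambda_{n+1} \hat{Q}^\lambda_n f_{n+1} = f_{n+1}$ follows from the fact that $\iota_{M_n}(M_n)$ generates $Q^\lambda_{n+1}M_n$ as a $\gfrak_{n+1}$-module, being the image of the generator $1\otimes M_n$ of $I_{n+1}M_n$, so that two $\gfrak_{n+1}$-homomorphisms out of $Q^\lambda_{n+1}M_n$ that agree on $\iota_{M_n}(M_n)$ must coincide. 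Combined with the object-level definition of $\hat{Q}^\lambda_n$ and \Cref{cor:preservation}, this exhibits $\hat{Q}^\lambda_n$ as the inverse equivalence of $Q^\lambda_{n+1}$.
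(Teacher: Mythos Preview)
Your approach is correct and genuinely different from the paper's. The paper defines $f_n$ by an explicit PBW projection formula: writing $f_{n+1}\bigl(\iota_{M_n}(v)\bigr)=\sum_j (u_j\otimes v_j)+K_{n+1}$, it projects each $u_j\in\Ulie(\gfrak_{n+1})$ onto $\Ulie(\gfrak_n)$ and sets $f_n(v)=\sum_j u'_j\cdot v_j$. The paper then simply asserts that this $f_n$ makes the square commute; unpacking that assertion amounts precisely to your claim that $f_{n+1}\bigl(\iota_{M_n}(M_n)\bigr)\subseteq \iota_{N_n}(N_n)$, so the two routes are equivalent in content. Your framing via $\mathfrak{u}^+$-invariants is more transparent, since it isolates the one nontrivial point.

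Your sketch for $V_{N_n}=\iota_{N_n}(N_n)$ via lifting invariants to $I_{n+1}N_n$ is more delicate than necessary, because a lift of a $\mathfrak{u}^+$-invariant in the quotient need not itself be $\mathfrak{u}^+$-invariant. A cleaner argument is a length count. For any simple $\Llie^{\gfrak_{n+1}}(\mu)$, every $\gfrak_n$-highest-weight vector in $\bigl(\Llie^{\gfrak_{n+1}}(\mu)\bigr)^{\mathfrak{u}^+}$ is already a $\gfrak_{n+1}$-highest-weight vector, hence unique up to scalar; this forces $\bigl(\Llie^{\gfrak_{n+1}}(\mu)\bigr)^{\mathfrak{u}^+}=\Llie^{\gfrak_n}(\mu)$. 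Since $(-)^{\mathfrak{u}^+}$ is left exact and $Q^\lambda_{n+1}N_n$ has the same composition length $\ell$ as $N_n$ by \Cref{cor:preservation}, one gets $\operatorname{length}_{\gfrak_n}(V_{N_n})\le \ell$. The injection $\iota_{N_n}$ gives the reverse inequality, whence $V_{N_n}=\iota_{N_n}(N_n)$. With this in hand, your remaining verifications (functoriality, the two triangle identities via naturality and via $\iota_{M_n}(M_n)$ generating $Q^\lambda_{n+1}M_n$) go through exactly as you describe.
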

	
	\begin{proof}
		Let  $K_{n+1}:=R^\lambda_{n+1}I_{n+1}\operatorname{inj}^\lambda_{\gfrak_n,\bfrak_n}N_n$.  For each $v\in M_n$, suppose that $u_1,u_2,\ldots,u_k$ are elements of $\Ulie(\gfrak_{n+1})$ and $v_1,v_2,\ldots,v_k$ are vectors in $N_n$ such that  $f_{n+1}\big(\iota_{M_n}(v)\big)=\sum_{j=1}^k\,\left(u_j\underset{\Ulie(\pfrak_{n+1})}{\otimes} v_j\right) +K_{n+1}$.  Denote by $u'_j$ the projection of $u_j$ onto $\Ulie(\gfrak_n)$ (in the PBW basis of $\Ulie(\gfrak_n)$).  Set $f_n(v):=\sum_{j=1}^k\,u'_j\cdot v_j$.  Then, $\hat{Q}_n^\lambda f_{n+1}:=f_n$ satisfies the required condition.
	\end{proof}
	
	\begin{cor}
		Let $M_n,N_n\in\bbggO^{\gfrak_n}_{\bfrak_n}[\lambda]$.  Then,
		\begin{align}
			\Hom_{\bbggO^{\gfrak_n}_{\bfrak_n}[\lambda]}\left(M_n,N_n\right)\cong \Hom_{\bbggO^{\gfrak_{n+1}}_{\bfrak_{n+1}}[\lambda]}\left(Q^\lambda_{n+1}M_n,Q^\lambda_{n+1}N_n\right)\,.
		\end{align}
		Therefore, the image $Q^{\lambda}_{n+1}\bbggO^{\gfrak_n}_{\bfrak_n}[\lambda]$ is the full subcategory of $\bbggO^{\gfrak_{n+1}}_{\bfrak_{n+1}}$ whose objects have composition factors of the form $\Llie^{\gfrak_{n+1}}_{\bfrak_{n+1}}(\mu)$ with $\mu\in W_{\gfrak_n,\hfrak}\cdot\lambda$.
		\label{cor:homcong}
	\end{cor}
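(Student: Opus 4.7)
The proof splits naturally into two parts: establishing the $\Hom$-isomorphism, and identifying the (essential) image of $Q^\lambda_{n+1}$. The plan for the $\Hom$-isomorphism is to show that the morphism-level action of $Q^\lambda_{n+1}$ and the assignment $\hat{Q}^\lambda_n$ constructed in \Cref{prop:qhat} are mutually inverse. Given $f_n:M_n\to N_n$ in $\bbggO^{\gfrak_n}_{\bfrak_n}[\lambda]$, the morphism $\hat{Q}^\lambda_n\big(Q^\lambda_{n+1}f_n\big)$ is pinned down by the same commutativity with $\iota_{M_n}$ and $\iota_{N_n}$ as $f_n$ itself, and since $\iota_{N_n}$ is injective this forces $\hat{Q}^\lambda_n\circ Q^\lambda_{n+1}=\id$ on Hom-sets. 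Conversely, given $f_{n+1}:Q^\lambda_{n+1}M_n\to Q^\lambda_{n+1}N_n$, both $f_{n+1}$ and $Q^\lambda_{n+1}\big(\hat{Q}^\lambda_n f_{n+1}\big)$ are $\gfrak_{n+1}$-linear extensions of the single $\gfrak_n$-morphism $\iota_{N_n}\circ\hat{Q}^\lambda_n f_{n+1}:M_n\to Q^\lambda_{n+1}N_n$. By \Cref{cor:preservation}, every composition factor of $Q^\lambda_{n+1}N_n$ has the form $\Llie^{\gfrak_{n+1}}_{\bfrak_{n+1}}(\mu)$ with $\mu\in W_{\gfrak_n,\hfrak}\cdot\lambda$, so the uniqueness clause of the universal extension proposition immediately preceding \Cref{prop:qhat} applies and forces $f_{n+1}=Q^\lambda_{n+1}\big(\hat{Q}^\lambda_n f_{n+1}\big)$.

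For the identification of the image, one containment is immediate: by \Cref{cor:preservation}, the composition factors of $Q^\lambda_{n+1}M_n$ are precisely those of $M_n$ under the relabelling $\Llie^{\gfrak_n}_{\bfrak_n}(\mu)\leftrightarrow \Llie^{\gfrak_{n+1}}_{\bfrak_{n+1}}(\mu)$, and an object of $\bbggO^{\gfrak_n}_{\bfrak_n}[\lambda]$ has composition factors indexed by $W_{\gfrak_n,\hfrak}\cdot\lambda$ (since $W_{\gfrak_n,\hfrak}$ is the Weyl group of $\gfrak_n$). For the reverse inclusion, essential surjectivity onto the full subcategory described, I would argue by transfinite induction along a generalized composition series of the given $M_{n+1}$, whose existence is guaranteed by \Cref{thm:goodfilter}(c). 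On simples, the proof of the preceding theorem already supplies $Q^\lambda_{n+1}\Llie^{\gfrak_n}_{\bfrak_n}(\mu)\cong \Llie^{\gfrak_{n+1}}_{\bfrak_{n+1}}(\mu)$; successor steps are handled by lifting short exact sequences using the $\Hom$-isomorphism just established together with exactness of $Q^\lambda_{n+1}$, while limit steps are handled by assembling the partial preimages $M_n^{(j)}\in\bbggO^{\gfrak_n}_{\bfrak_n}[\lambda]$ into a direct limit.

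I expect the main obstacle to lie in the limit stages of this transfinite induction: one must verify that a directed system of partial preimages in $\bbggO^{\gfrak_n}_{\bfrak_n}[\lambda]$ really does assemble into an honest object of that category, and that applying $Q^\lambda_{n+1}$ to that colimit recovers the appropriate subobject of $M_{n+1}$. The issue is that while parabolic induction $I_{n+1}$ is a left adjoint and hence commutes with colimits, the truncation step $R^\lambda_{n+1}$ used in the definition of $Q^\lambda_{n+1}$ is only left-exact, so interchanging $Q^\lambda_{n+1}$ with direct limits is not automatic and will require a careful bookkeeping argument exploiting that only finitely many $\mu\in W_{\gfrak_n,\hfrak}\cdot\lambda$ can appear as composition-factor labels.
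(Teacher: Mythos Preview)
Your treatment of the $\Hom$-isomorphism is correct and matches what the paper intends: it is an immediate consequence of \Cref{prop:qhat}, which already exhibits $\hat{Q}^\lambda_n$ as the inverse equivalence of $Q^\lambda_{n+1}$ onto its image, together with the uniqueness clause of the universal-extension proposition preceding it. The paper gives no separate proof of this corollary, so there is nothing further to compare here.

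The genuine issue is with the essential-surjectivity argument. You set up a transfinite induction along a generalized composition series and then devote your ``expected obstacle'' paragraph to limit stages and the failure of $Q^\lambda_{n+1}$ to commute with colimits. But there are \emph{no} limit stages: every object $M_{n+1}$ in the target subcategory has finite length. The set $W_{\gfrak_n,\hfrak}\cdot\lambda$ is finite (the Weyl group $W_{\gfrak_n,\hfrak}$ is finite), each weight space $M_{n+1}^\mu$ is finite-dimensional, and every composition factor contributes at least one to some such weight space, so the total length is bounded by $\sum_{\mu\in W_{\gfrak_n,\hfrak}\cdot\lambda}\dim M_{n+1}^\mu<\infty$. (The paper uses exactly this observation in the proof of the exactness theorem for $Q^\lambda_{n+1}$.) So ordinary induction on length suffices, and your worries about $R^\lambda_{n+1}$ versus direct limits are beside the point.

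A secondary point: even in the finite induction, your successor step ``lift short exact sequences using the $\Hom$-isomorphism together with exactness'' is too quick. Full faithfulness plus exactness gives injectivity of the induced map on $\Ext^1$, not surjectivity; you cannot conclude directly that an extension of $Q^\lambda_{n+1}L_n$ by $Q^\lambda_{n+1}A_n$ comes from an extension of $L_n$ by $A_n$. The clean way to close this is to invoke the universal property (the proposition immediately preceding \Cref{prop:qhat}) directly: given the short exact sequence $0\to Q^\lambda_{n+1}A_n\to E\to Q^\lambda_{n+1}L_n\to 0$, build an appropriate $\gfrak_n$-module $C_n$ together with a $\gfrak_n$-map $C_n\to E$, extend it to a $\gfrak_{n+1}$-map $Q^\lambda_{n+1}C_n\to E$ via that proposition, and then verify it is an isomorphism by comparing composition factors using \Cref{cor:preservation}.
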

	
	For each $n\in\Bbb{Z}_{>0}$, fix a set $\mathcal{S}_n$ of representatives of $[\lambda]\in \Omega^{\gfrak_n}_{\bfrak_n}$.  We can further assume that $\mathcal{S}_n\supseteq\mathcal{S}_{n+1}$ for every positive integer $n$.  Then, $\mathcal{S}:=\bigcap\limits_{n\in\mathbb{Z}_{>0}}\,\mathcal{S}_n$ is a set of representatives of $[\lambda]\in\Omega^{\gfrak}_{\bfrak}$.
	
	\begin{prop}
		For each $\lambda\in \mathcal{S}$, the direct limit of $\Big(Q_{n+1}^\lambda:\bbggO_{\bfrak_n}^{\gfrak_n}[\lambda]\rightsquigarrow \bbggO_{\bfrak_{n+1}}^{\gfrak_{n+1}}[\lambda]\Big)_{n\in\Bbb Z_{>0}}$ is the full subcategory ${\mathcal{O}}^\gfrak_\bfrak[\lambda]$ of $\bbggO^\gfrak_\bfrak[\lambda]$ consisting of $\gfrak$-modules of finite length.
		\label{prop:limitofcalO}
	\end{prop}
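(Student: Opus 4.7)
The plan is to exhibit a pair of mutually quasi-inverse functors realising the claimed equivalence. The direction going from the direct limit to $\mathcal{O}^\gfrak_\bfrak[\lambda]$ will send $M_n\in\bbggO^{\gfrak_n}_{\bfrak_n}[\lambda]$ to the $\gfrak$-module obtained by iterating the functors $Q^\lambda_{m+1}$ and taking the colimit along the injective natural transformations $\iota$ supplied by \Cref{prop:nattransQ}; the reverse direction will restrict a finite-length object of $\bbggO^\gfrak_\bfrak[\lambda]$ to a suitable $\gfrak_n$-submodule for $n$ sufficiently large.

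First I would construct $\Phi$ on objects. For $M_n\in\bbggO^{\gfrak_n}_{\bfrak_n}[\lambda]$, iterate to form $M_n^{(n)}:=M_n$ and $M_n^{(m+1)}:=Q^\lambda_{m+1}M_n^{(m)}$, together with injective $\gfrak_m$-linear maps $\iota_{M_n^{(m)}}\colon M_n^{(m)}\hookrightarrow M_n^{(m+1)}$. Set $\Phi(M_n):=\varinjlim_{m\ge n} M_n^{(m)}$; each vector of $\Phi(M_n)$ eventually lies in some $M_n^{(m)}$, and the $\gfrak_m$-actions assemble into a $\gfrak$-action because the $\iota$ are $\gfrak_m$-linear and $\gfrak=\bigcup_m\gfrak_m$. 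I then verify that $\Phi(M_n)$ lies in $\bbggO^\gfrak_\bfrak[\lambda]$: local $\Ulie(\nfrak^+)$-finiteness holds because, when $M_n^{(m)}$ is viewed as a $\pfrak_{m+1}$-module inside $I_{m+1}M_n^{(m)}$, the root vectors $x_\alpha$ with $\alpha\in\Phi^+\setminus\Phi^+_m$ annihilate it, so $\Ulie(\nfrak^+)\cdot v=\Ulie(\nfrak^+_m)\cdot v$ is finite-dimensional for each $v\in M_n^{(m)}$; finite-dimensionality of weight spaces comes from \Cref{cor:preservation}, together with the fact that, for a fixed $\nu\in\hfrak^*$ and a fixed composition factor label $\mu$, the dimension $\dim\Llie^{\gfrak_m}_{\bfrak_m}(\mu)^\nu$ stabilises to $\dim\Llie^\gfrak_\bfrak(\mu)^\nu<\infty$ as $m\to\infty$. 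Thus $\Phi(M_n)$ lands in $\mathcal{O}^\gfrak_\bfrak[\lambda]$ with length equal to that of $M_n$.

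For full faithfulness I would iterate \Cref{cor:homcong} to obtain, for any $M_n,N_n\in\bbggO^{\gfrak_n}_{\bfrak_n}[\lambda]$, a compatible chain of bijections $\Hom(M_n,N_n)\cong\Hom(Q^\lambda_{n+1}M_n,Q^\lambda_{n+1}N_n)\cong\cdots$ whose colimit identifies $\Hom_\gfrak\bigl(\Phi(M_n),\Phi(N_n)\bigr)$ with $\Hom(M_n,N_n)$; the truncation procedure of \Cref{prop:qhat} provides the explicit quasi-inverse at the morphism level.

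The hard part will be essential surjectivity. Given $M\in\mathcal{O}^\gfrak_\bfrak[\lambda]$ of finite length with simple composition factors $\Llie(\mu_1),\ldots,\Llie(\mu_k)$, I would choose $n$ large enough that each $\mu_i\in W_{\gfrak_n,\hfrak}\cdot\lambda$ (possible because the $\mu_i$ lie in $[\lambda]$ and $W=\bigcup_n W_{\gfrak_n,\hfrak}$), and fix a composition series of $M$ with the $\mu_i$ ordered decreasingly under $\prec$. Picking highest-weight representatives $v_i$ of each subquotient, the $\gfrak_n$-submodule $M_n:=\Ulie(\gfrak_n)\cdot\langle v_1,\ldots,v_k\rangle$ should lie in $\bbggO^{\gfrak_n}_{\bfrak_n}[\lambda]$, and the universal property stated in the unnamed proposition preceding \Cref{prop:qhat}, applied to the inclusion $M_n\hookrightarrow M$, yields compatible $\gfrak_m$-maps $M_n^{(m)}\to M$ and hence a $\gfrak$-map $\Phi(M_n)\to M$. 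Comparing formal characters via \Cref{cor:preservation} and part (c) of \Cref{thm:goodfilter} should force this map to be an isomorphism. The main obstacle I anticipate is arranging the choice of $v_i$ and $n$ so that $M_n$ lands in the single block $\bbggO^{\gfrak_n}_{\bfrak_n}[\lambda]$ rather than spreading across several blocks of $\bbggO^{\gfrak_n}_{\bfrak_n}$; this requires ordering the composition series so that each $v_i$ becomes a $\bfrak^+_n$-highest weight vector modulo its predecessors, which should be achievable for $n$ large because only finitely many positive roots can create horizontal transitions among the finitely many weights $\mu_i$.
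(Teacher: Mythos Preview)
Your forward construction $\Phi$ is exactly the paper's functor $\textup{q}^\lambda_n$, and your overall outline is correct. The substantive difference lies in the reverse direction. Rather than choosing a composition series, highest-weight representatives $v_i$, and generating $M_n=\Ulie(\gfrak_n)\cdot\langle v_1,\ldots,v_k\rangle$, the paper defines a canonical inverse functor
\[
\textup{p}^\lambda_n M \;:=\; \sum_{\xi \in W_{\gfrak_n,\hfrak}\cdot\lambda}\,\Ulie(\gfrak_n)\cdot M^\xi,
\]
summing over \emph{all} weight spaces at the relevant weights. This sidesteps precisely the obstacle you flag at the end: there are no choices of ordering or lifts to worry about, and functoriality in $M$ is automatic. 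The paper then checks $\textup{p}^\lambda_n\,\textup{q}^\lambda_n=\textup{Id}$ and $\textup{p}^\lambda_{n+k}\,\textup{q}^\lambda_n=Q^\lambda_{n+k}\cdots Q^\lambda_{n+1}$, and uses these to verify the \emph{universal property} of the direct limit directly: given any target category $\tilde{\mathcal{O}}[\lambda]$ with compatible $\tilde{\textup{q}}^\lambda_n$, one sets $\textup{t}^\lambda M:=\tilde{\textup{q}}^\lambda_n\,\textup{p}^\lambda_n M$ for $n$ large enough (which exists because $M$ has finite length), and this is the required unique factorisation.

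So your route via full faithfulness plus essential surjectivity is valid but slightly less direct than the paper's, which treats the statement literally as a universal-property claim. Your essential-surjectivity argument would work, but the paper's $\textup{p}^\lambda_n$ is both cleaner and manifestly functorial, and it dissolves the block-landing issue you anticipated rather than having to argue around it.
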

	
	\begin{proof}
		For conveinence, write ${\mathcal{O}}[\lambda]$ for ${\mathcal{O}}^\gfrak_\bfrak[\lambda]$.  Fix $n\in\mathbb{Z}_{>0}$.  First, we define $\textup{q}^{\lambda}_n:\bbggO_{\bfrak_n}^{\gfrak_n}[\lambda]\rightsquigarrow{\mathcal{O}}[\lambda]$ as follows.  For a given $M_n\in\bbggO_{\bfrak_n}^{\gfrak_n}[\lambda]$ and $k\in\mathbb{Z}_{\geq 0}$, write $M_{n+k}$ for $Q^{\lambda}_{n+k}Q^{\lambda}_{n+k-1}\cdots Q_{n+1}^\lambda M_n \in \bbggO_{\bfrak_{n+k}}^{\gfrak_{n+k}}[\lambda]$.  Using Proposition~\ref{prop:nattransQ} above and noting that $\bbggO^{\gfrak_{n}}_{\bfrak_n}[\lambda]=\bggO^{\gfrak_n}_{\bfrak_n}[\lambda]$ (whose objects are finitely generated), we see that $\left(\iota_{M_{n+k}}:M_{n+k}\to M_{n+k+1}\right)_{k\in\mathbb{Z}_{\geq 0}}$ is a directed system whose direct limit $M$ is clearly in ${\mathcal{O}}[\lambda]$.  We set $\textup{q}_n^\lambda M_n$ to be the direct limit $M$.  It is easy to see that $\textup{q}_{n+1}^\lambda Q^\lambda_{n+1}=\textup{q}_n^\lambda$.
	
		Let $\textup{p}^\lambda_n:{\mathcal{O}}[\lambda]\rightsquigarrow \bbggO_{\bfrak_n}^{\gfrak_n}[\lambda]$ be the functor defined as follows: $
			\textup{p}^\lambda_nM:=\sum\limits_{\xi \in W_{\gfrak_n,\hfrak}\cdot\lambda}\,\Ulie(\gfrak_n)\cdot M^\xi
		$ for every $M\in{\mathcal{O}}[\lambda]$.  We can easily show that $\textup{p}^\lambda_n\textup{q}^\lambda_n=\textup{Id}_{\bbggO_{\bfrak_n}^{\gfrak_n}[\lambda]}$ and $\textup{p}^\lambda_{n+k}\textup{q}^\lambda_n=Q^\lambda_{n+k}Q^\lambda_{n+k-1}\cdots Q^\lambda_{n+1}$.  
		
		Suppose that there exists a category $\tilde{\mathcal{O}}[\lambda]$ along with functors $\tilde{\textup{q}}_n^\lambda:\bbggO_{\bfrak_n}^{\gfrak_n}[\lambda]\rightsquigarrow\tilde{\mathcal{O}}[\lambda]$ such that $\tilde{\textup{q}}_{n+1}^\lambda Q^\lambda_{n+1}=\tilde{\textup{q}}_{n}^\lambda$ for all $n=1,2,3,\ldots$.  Define $\textup{t}_n^\lambda:{\mathcal{O}}[\lambda]\rightsquigarrow\tilde{\mathcal{O}}[\lambda]$ via $\textup{t}^\lambda_nM:=\tilde{\textup{q}}^\lambda_n\textup{p}^\lambda_n M$ for all $M\in{\mathcal{O}}[\lambda]$.  Since $M$ is a $\gfrak$-module of finite length, $\textup{t}^\lambda_{n_0(M)} M=\textup{t}^\lambda_{n_0(M)+1}M=\textup{t}^\lambda_{n_0(M)+2}M=\ldots$ for some positive integer $n_0(M)$.  Let $\textup{t}^\lambda:{\mathcal{O}}[\lambda]\rightsquigarrow\tilde{\mathcal{O}}[\lambda]$ be given by $\textup{t}^\lambda M:=\textup{t}^\lambda_{n_0(M)}M$ for every $M\in{\mathcal{O}}[\lambda]$.  We can easily see that $\textup{t}^\lambda \textup{q}_n^\lambda=\tilde{\textup{q}}_n^\lambda$ for every positive integer $n$.  

		Note that the functor  $\textup{t}^\lambda:{\mathcal{O}}[\lambda]\rightsquigarrow\tilde{\mathcal{O}}[\lambda]$ above is unique with the property that $\textup{t}^\lambda \textup{q}_n^\lambda=\tilde{\textup{q}}_n^\lambda$ for every positive integer $n$.   Therefore, ${\mathcal{O}}[\lambda]$ is the required direct limit.
	\end{proof}
	
	\begin{cor}
		Let $\textup{q}^{\lambda}_n:\bbggO_{\bfrak_n}^{\gfrak_n}[\lambda]\rightsquigarrow{\mathcal{O}}^\gfrak_\bfrak[\lambda]$ be as given in the proof of the previous proposition.  Then, $\textup{q}^\lambda_n$ is an exact functor.
	\end{cor}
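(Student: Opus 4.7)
The plan is to deduce exactness of $\textup{q}^\lambda_n$ from the exactness of each $Q^\lambda_{n+k+1}$ (established in the theorem preceding \Cref{cor:preservation}) together with the exactness of filtered direct limits in the ambient category of $\gfrak$-modules.

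Concretely, I would begin with an arbitrary short exact sequence $0 \to N_n \to M_n \to K_n \to 0$ in $\bbggO^{\gfrak_n}_{\bfrak_n}[\lambda]$. Applying the exact functor $Q^\lambda_{n+1}$, and then inductively $Q^\lambda_{n+k+1}$ for each $k \geq 0$, produces for every $k$ a short exact sequence
\begin{equation}
0 \to N_{n+k} \to M_{n+k} \to K_{n+k} \to 0
\end{equation}
in $\bbggO^{\gfrak_{n+k}}_{\bfrak_{n+k}}[\lambda]$, where $M_{n+k} = Q^\lambda_{n+k} \cdots Q^\lambda_{n+1} M_n$ and similarly for $N_{n+k}$ and $K_{n+k}$. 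By \Cref{prop:nattransQ}, the inclusions $\iota_{M_{n+k}}$ are natural in $M_n$, so these short exact sequences assemble into a directed system of short exact sequences indexed by $k \in \mathbb{Z}_{\geq 0}$.

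Next, I would take the direct limit. Since direct limits over a directed set are exact in the category of $\gfrak$-modules (and hence in any full subcategory of $\gfrak\textup{-Mod}$ closed under such limits, which includes $\bbggO^\gfrak_\bfrak[\lambda]$ by the construction of $\textup{q}^\lambda_n$ in the proof of \Cref{prop:limitofcalO}), the colimit sequence
\begin{equation}
0 \to \textup{q}^\lambda_n N_n \to \textup{q}^\lambda_n M_n \to \textup{q}^\lambda_n K_n \to 0
\end{equation}
is exact. This gives exactness of $\textup{q}^\lambda_n$.

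I do not anticipate a significant obstacle: the key inputs (exactness of each $Q^\lambda_{n+k+1}$, naturality of $\iota$, and commutation of direct limits with kernels and cokernels in a module category) are all either proved in the excerpt or standard. The only thing one should double-check is that the directed system of short exact sequences constructed via $\iota_{M_{n+k}}$, $\iota_{N_{n+k}}$, $\iota_{K_{n+k}}$ is genuinely a morphism of directed systems; this follows immediately from the naturality square in \Cref{prop:nattransQ} applied to the original inclusion and projection.
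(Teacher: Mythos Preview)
Your proposal is correct and is precisely the argument the paper has in mind: the corollary is stated without proof because exactness of $\textup{q}^\lambda_n$ follows immediately from the exactness of each $Q^\lambda_{n+k+1}$, the naturality of the maps $\iota$ in \Cref{prop:nattransQ}, and the exactness of filtered direct limits in a module category. There is nothing to add.
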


\begin{cor}
	Let $Q_{n+1}:\bbggO^{\gfrak_n}_{\bfrak_n}\rightsquigarrow \bbggO^{\gfrak_{n+1}}_{\bfrak_{n+1}}$ be the functor defined by
	\begin{align}
		Q_{n+1}M_n=\bigoplus_{\lambda\in \mathcal{S}}\,\operatorname{inj}^\lambda_{\gfrak_{n+1},\bfrak_{n+1}}Q^\lambda_{n+1}\pr^\lambda_{\gfrak_n,\bfrak_n} M_n
	\end{align}
	for all $M_n\in\bbggO^{\gfrak_n}_{\bfrak_n}$.  Then, the direct limit of $\Big(Q_{n+1}:\bbggO^{\gfrak_n}_{\bfrak_n}\rightsquigarrow \bbggO^{\gfrak_{n+1}}_{\bfrak_{n+1}}\Big)_{n\in\Bbb Z_{>0}}$ is the full subcategory ${\mathcal{O}}^{\gfrak}_{\bfrak}$ (or simply, ${\mathcal{O}}$) of $\bbggO^{\gfrak}_{\bfrak}$ along with a family of exact functors $\left(\textup{q}_n:\bbggO^{\gfrak_n}_{\bfrak_n}\rightsquigarrow {\mathcal{O}}^\gfrak_\bfrak\right)_{n\in\mathbb{Z}_{>0}}$, where ${\mathcal{O}}^\gfrak_\bfrak$ is given by ${\mathcal{O}}^{\gfrak}_{\bfrak}=\bigoplus\limits_{\lambda\in \mathcal{S}}\,{\mathcal{O}}^{\gfrak}_{\bfrak}[\lambda]$.
	\label{cor:qnfunctors}
\end{cor}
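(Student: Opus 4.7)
The plan is to reduce the statement to the block-by-block analysis already established in \Cref{prop:limitofcalO} and its corollary, then assemble the pieces. The key structural fact we will exploit is the orthogonal block decomposition $\bbggO^{\gfrak_n}_{\bfrak_n}=\bigoplus_{\lambda\in\mathcal{S}_n}\,\bbggO^{\gfrak_n}_{\bfrak_n}[\lambda]$ together with the nesting $\mathcal{S}_n\supseteq\mathcal{S}_{n+1}\supseteq\mathcal{S}$. Since each projection $\pr^\lambda_{\gfrak_n,\bfrak_n}$ and injection $\operatorname{inj}^\lambda_{\gfrak_{n+1},\bfrak_{n+1}}$ is exact, and each $Q^\lambda_{n+1}$ is exact by the theorem preceding \Cref{cor:preservation}, the functor $Q_{n+1}$ is exact as a (finite, block-by-block) composition and coproduct of exact functors. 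The analogous argument gives exactness of $\textup{q}_n$ once it is defined.

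First I would define the functors $\textup{q}_n:\bbggO^{\gfrak_n}_{\bfrak_n}\rightsquigarrow\mathcal{O}^\gfrak_\bfrak$ by
\begin{align}
\textup{q}_n M_n:=\bigoplus_{\lambda\in\mathcal{S}}\,\textup{q}_n^\lambda\,\pr^\lambda_{\gfrak_n,\bfrak_n}M_n\,,
\end{align}
using the functors $\textup{q}_n^\lambda$ from the proof of \Cref{prop:limitofcalO}. For this to land in $\mathcal{O}^\gfrak_\bfrak$, one must check that the right-hand side has finite length, which is true because $M_n\in\bbggO^{\gfrak_n}_{\bfrak_n}$ has finite length (objects in $\bggO^{\gfrak_n}_{\bfrak_n}$ are finitely generated over a finite-dimensional reductive algebra), and hence only finitely many of the summands $\pr^\lambda_{\gfrak_n,\bfrak_n}M_n$ are nonzero. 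The compatibility $\textup{q}_{n+1}Q_{n+1}=\textup{q}_n$ is then checked block by block, reducing to the identity $\textup{q}_{n+1}^\lambda Q^\lambda_{n+1}=\textup{q}_n^\lambda$ proved earlier.

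The main content is verifying the universal property. Suppose we are given a category $\tilde{\mathcal{O}}$ together with functors $\tilde{\textup{q}}_n:\bbggO^{\gfrak_n}_{\bfrak_n}\rightsquigarrow\tilde{\mathcal{O}}$ satisfying $\tilde{\textup{q}}_{n+1}Q_{n+1}=\tilde{\textup{q}}_n$. Restricting each $\tilde{\textup{q}}_n$ to the block $\bbggO^{\gfrak_n}_{\bfrak_n}[\lambda]$ for $\lambda\in\mathcal{S}$ produces a compatible system $\tilde{\textup{q}}_n\operatorname{inj}^\lambda_{\gfrak_n,\bfrak_n}:\bbggO^{\gfrak_n}_{\bfrak_n}[\lambda]\rightsquigarrow\tilde{\mathcal{O}}$, to which \Cref{prop:limitofcalO} applies to yield a unique $\textup{t}^\lambda:\mathcal{O}^\gfrak_\bfrak[\lambda]\rightsquigarrow\tilde{\mathcal{O}}$ with $\textup{t}^\lambda\textup{q}^\lambda_n=\tilde{\textup{q}}_n\operatorname{inj}^\lambda_{\gfrak_n,\bfrak_n}$. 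Then I would glue these into a single functor $\textup{t}:\mathcal{O}^\gfrak_\bfrak\rightsquigarrow\tilde{\mathcal{O}}$ via $\textup{t}M:=\bigoplus_{\lambda\in\mathcal{S}}\,\textup{t}^\lambda\,\pr^\lambda_{\gfrak,\bfrak}M$, which is well defined because $M\in\mathcal{O}^\gfrak_\bfrak$ has finite length and hence lives in finitely many blocks.

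The delicate point -- and the one I expect to be the main obstacle -- is ensuring that this gluing is itself forced (i.e., that $\textup{t}$ really is unique). The issue is that uniqueness comes block by block from \Cref{prop:limitofcalO}, but to conclude global uniqueness one needs that $\mathcal{O}^\gfrak_\bfrak=\bigoplus_{\lambda\in\mathcal{S}}\,\mathcal{O}^\gfrak_\bfrak[\lambda]$ as a category, and that any functor out of it is determined by its restriction to each block summand. The first assertion follows by combining the block decomposition of $\bbggO$ recalled in the preliminaries with the fact that a finite-length module has only finitely many composition factors (so $\pr^\lambda_{\gfrak,\bfrak}M=0$ for all but finitely many $\lambda$). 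The second is a formal consequence. Once these points are spelled out, the identity $\textup{t}\,\textup{q}_n=\tilde{\textup{q}}_n$ follows immediately by restricting to blocks, and exactness of $\textup{q}_n$ is read off from the already-noted block-wise exactness.
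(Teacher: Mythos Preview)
Your proposal is correct and matches the paper's intent: the paper states this corollary with no proof at all, leaving it as an immediate consequence of \Cref{prop:limitofcalO} and the block decomposition, which is precisely the block-by-block reduction you carry out. One small wording issue: you justify finite length of $M_n$ by saying $\gfrak_n$ is a finite-dimensional reductive algebra, but in the paper $\gfrak_n=\dot{\gfrak}_n+\hfrak$ need not be finite-dimensional; the correct reason (invoked in the proof of \Cref{prop:limitofcalO}) is that each block $\bbggO^{\gfrak_n}_{\bfrak_n}[\lambda]$ coincides with $\bggO^{\gfrak_n}_{\bfrak_n}[\lambda]$ because the relevant Weyl orbit $W_{\gfrak_n,\hfrak}\cdot\lambda$ is finite.
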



\subsection{Some Category Equivalences}

Take $\lambda\in\hfrak^*$.  We define the following notations:
\begin{itemize}
	\item $\Phi_{\gfrak,\hfrak}[\lambda]:=\Big\{\alpha\in \Phi\,\big|\,\lambda(h_\alpha)\in\mathbb{Z}\Big\}$ (also denoted by $\Phi[\lambda]$,
	\item  $\Phi^\pm_{\gfrak,\bfrak}[\lambda]:=\Phi_{\gfrak,\bfrak}[\lambda]\cap \Phi^{\pm}$ (also denoted by $\Phi^\pm[\lambda]$,
	\item $\Sigma_{\gfrak,\bfrak}[\lambda]$ or $\Sigma^+_{\gfrak,\bfrak}[\lambda]$ is the set of simple roots with respect to the set of positive roots $\Phi_{\gfrak,\bfrak}^+[\lambda]$ of the root system $\Phi_{\gfrak,\hfrak}[\lambda]$ (also denoted by $\Sigma[\lambda]$ or $\Sigma^+[\lambda]$,
	\item $\Sigma^-_{\gfrak,\bfrak}[\lambda]:=-\Sigma^+_{\gfrak,\bfrak}[\lambda]$ (also denoted by $\Sigma^-[\lambda]$),
	\item $\Lambda_{\gfrak,\hfrak}[\lambda]:=\huelle_\mathbb{Z}\Phi_{\gfrak,\hfrak}[\lambda]$ (also denoted by $\Lambda[\lambda]$,
	\item $\hfrak[\lambda]:=\huelle_\mathbb{K}\big\{h\in\hfrak\,|\,\lambda(h)\in\mathbb{Z}\rangle$,
	\item $\gfrak[\lambda]:=\hfrak[\lambda]\oplus\bigoplus_{\alpha\in\Phi[\lambda]}\,\gfrak^\alpha$,
	\item $\bfrak[\lambda]:=\bfrak[\lambda]\oplus\bigoplus_{\alpha\in\Phi^+[\lambda]}\,\gfrak^\alpha$ (also denoted by $\bfrak^+[\lambda]$),
	\item $\nfrak[\lambda]:=\bigoplus_{\alpha\in\Phi^+[\lambda]}\,\gfrak^\alpha$ (also denoted by $\nfrak^+[\lambda]$),
	\item $\nfrak^-[\lambda]:=\bigoplus_{\alpha\in\Phi^-[\lambda]}\,\gfrak^\alpha$ , 
	\item $\lambda^\natural:=\lambda\big|_{\hfrak[\lambda]}\in\big(\hfrak[\lambda]\big)^*$, and
	\item $\underline{W}_{\gfrak,\bfrak}[\lambda]$ (also denoted by $\underline{W}[\lambda]$) is the subgroup of $W_{\gfrak,\hfrak}$ consisting of elements $w$ such that $w\cdot \lambda=\lambda$.  
\end{itemize}

\begin{prop}
	Let $n$ be a positive integer and $\lambda\in\hfrak^*$.  Then, there exists a categorical equivalence $\mathcal{E}^\lambda_n:\bbggO^{\gfrak_n}_{\bfrak_n}[\lambda]\rightsquigarrow \bbggO^{\gfrak_n[\lambda]}_{\bfrak_n[\lambda]}[\lambda^\natural]$ which sends $\Llie^{\gfrak_n}_{\bfrak_n}(\mu)$ to $\Llie^{\gfrak_n[\lambda]}_{\bfrak_n[\lambda]}(\mu^\natural)$ for all $\mu\in[\lambda]\cap\left(W_{\gfrak_n,\hfrak}\cdot\lambda\right)$.
\end{prop}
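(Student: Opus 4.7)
The plan is to reduce the assertion to Soergel's theorem on block equivalences of the classical BGG category, stated in the introduction as \cite[Theorem~11]{Soergel}.

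First, I would verify that $\gfrak_n[\lambda]$, together with $\bfrak_n[\lambda]=\hfrak[\lambda]\oplus\nfrak^+[\lambda]$ and Cartan $\hfrak[\lambda]$, is a reductive Lie algebra with finite-dimensional derived subalgebra. This amounts to showing that $\Phi_{\gfrak_n,\hfrak}[\lambda]$ is a closed sub-root system of $\Phi_{\gfrak_n,\hfrak}$ with $\Sigma_{\gfrak_n,\bfrak_n}[\lambda]$ as its set of simple roots for the positive system induced by $\bfrak_n$, both standard facts about integral sub-root-systems; the bound $[\gfrak_n[\lambda],\gfrak_n[\lambda]]\subseteq[\gfrak_n,\gfrak_n]\subseteq[\dot{\gfrak}_n,\dot{\gfrak}_n]$ gives finite-dimensionality of the derived algebra. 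Moreover, since $W_{\gfrak_n,\hfrak}$ is finite (it coincides with the Weyl group of the finite-dimensional $\dot{\gfrak}_n$), every object of the block $\bbggO^{\gfrak_n}_{\bfrak_n}[\lambda]$ has finite length, so $\bbggO^{\gfrak_n}_{\bfrak_n}[\lambda]$ coincides with the classical BGG block $\bggO^{\gfrak_n}_{\bfrak_n}[\lambda]$; the same applies on the $\gfrak_n[\lambda]$-side.

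Next, I would compare the two integral Coxeter systems. The group $W_{\gfrak_n,\bfrak_n}[\lambda]$ is by construction the reflection group of $\Phi_{\gfrak_n,\hfrak}[\lambda]$, and its generating simple reflections $S_{\gfrak_n,\bfrak_n}[\lambda]$ are indexed by $\Sigma_{\gfrak_n,\bfrak_n}[\lambda]$. On the other side, because $\lambda^\natural$ is integral with respect to every root of $\gfrak_n[\lambda]$, the integral Weyl group $W_{\gfrak_n[\lambda],\bfrak_n[\lambda]}[\lambda^\natural]$ is the full Weyl group of $\gfrak_n[\lambda]$, which is the Weyl group of $\Phi_{\gfrak_n,\hfrak}[\lambda]$; its simple reflections are again indexed by $\Sigma_{\gfrak_n,\bfrak_n}[\lambda]$. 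Thus the two Coxeter systems are canonically isomorphic.

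Applying Soergel's theorem in the form recalled in the introduction then produces the desired equivalence $\mathcal{E}^\lambda_n$. Under this equivalence, simple objects correspond via the matching of their highest-weight parametrizations: the $W_{\gfrak_n,\bfrak_n}[\lambda]$-orbit of $\lambda$ (restricted to $[\lambda]$) is paired with the $W_{\gfrak_n[\lambda],\bfrak_n[\lambda]}[\lambda^\natural]$-orbit of $\lambda^\natural$ through $\mu\leftrightarrow\mu^\natural$, which is exactly the assignment demanded by the statement. The main obstacle I anticipate is the first step's reduction $\bbggO^{\gfrak_n}_{\bfrak_n}[\lambda]=\bggO^{\gfrak_n}_{\bfrak_n}[\lambda]$: although $\gfrak_n$ has an infinite-dimensional Cartan $\hfrak$, this is handled by noting that within a fixed block the center of $\gfrak_n$ (the part of $\hfrak$ outside $\dot{\gfrak}_n$) acts by a single character, so the block reduces to a classical BGG block up to a harmless central twist, and Soergel's theorem applies verbatim.
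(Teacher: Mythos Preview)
Your proposal is correct and takes essentially the same approach as the paper: the paper's proof is the single line ``This proposition is a direct consequence of \cite[Theorem~11]{Soergel},'' and your plan simply unpacks the verification that Soergel's hypotheses are met (same integral Coxeter system on both sides, reduction of $\bbggO^{\gfrak_n}_{\bfrak_n}[\lambda]$ to the classical block via the finite Weyl group and central twist). One small addition worth making explicit is that the canonical identification of Coxeter systems also matches the dot-stabilizers $\underline{W}_{\gfrak_n,\bfrak_n}[\lambda]$ and $\underline{W}_{\gfrak_n[\lambda],\bfrak_n[\lambda]}[\lambda^\natural]$, since $s_\alpha\cdot\lambda=\lambda$ is the condition $(\lambda+\rho)(h_\alpha)=0$ and $h_\alpha\in\hfrak[\lambda]$ for $\alpha\in\Phi[\lambda]$; this is needed for Soergel's theorem in the singular case and is implicit in the paper's one-line citation.
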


\begin{proof}
	This proposition is a direct consequence of \cite[Theorem 11]{Soergel}.
\end{proof}

\begin{cor}
	 For each $\lambda\in\hfrak^*$, there exists a categorical equivalence $\mathcal{E}^\lambda:\mathcal{O}^{\gfrak}_{\bfrak}[\lambda]\rightsquigarrow \mathcal{O}^{\gfrak[\lambda]}_{\bfrak[\lambda]}[\lambda^\natural]$ which sends $\Llie^{\gfrak}_{\bfrak}(\mu)$ to $\Llie^{\gfrak[\lambda]}_{\bfrak[\lambda]}(\mu^\natural)$ for all $\mu\in[\lambda]$.
	 \label{cor:soergel}
\end{cor}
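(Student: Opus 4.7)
The plan is to construct $\mathcal{E}^\lambda$ as the direct limit of the stage-wise equivalences $\mathcal{E}^\lambda_n$ from the preceding proposition. First, I apply \cref{prop:limitofcalO} twice: to the pair $(\gfrak, \bfrak)$ with its exhaustion $(\gfrak_n)_n$, which exhibits $\mathcal{O}^\gfrak_\bfrak[\lambda]$ as the direct limit of $(\bbggO^{\gfrak_n}_{\bfrak_n}[\lambda])_n$ under the transition functors $Q^\lambda_{n+1}$; and to the pair $(\gfrak[\lambda], \bfrak[\lambda])$ with the induced exhaustion $(\gfrak_n[\lambda])_n$, which exhibits $\mathcal{O}^{\gfrak[\lambda]}_{\bfrak[\lambda]}[\lambda^\natural]$ as the direct limit of $(\bbggO^{\gfrak_n[\lambda]}_{\bfrak_n[\lambda]}[\lambda^\natural])_n$ under the analogous functors $Q^{\lambda^\natural}_{n+1}$.

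The crux of the proof is to produce a coherent family of natural isomorphisms $\sigma_n \colon \mathcal{E}^\lambda_{n+1} \circ Q^\lambda_{n+1} \Longrightarrow Q^{\lambda^\natural}_{n+1} \circ \mathcal{E}^\lambda_n$. To this end I would invoke \cref{cor:homcong}, which identifies $Q^\lambda_{n+1}$ with an equivalence of $\bbggO^{\gfrak_n}_{\bfrak_n}[\lambda]$ onto the full subcategory $\mathcal{A}_{n+1} \subseteq \bbggO^{\gfrak_{n+1}}_{\bfrak_{n+1}}[\lambda]$ of modules whose composition factors $\Llie^{\gfrak_{n+1}}_{\bfrak_{n+1}}(\mu)$ satisfy $\mu \in W_{\gfrak_n,\hfrak}\cdot\lambda$, and analogously identifies $Q^{\lambda^\natural}_{n+1}$ with the inclusion of a corresponding subcategory $\mathcal{B}_{n+1} \subseteq \bbggO^{\gfrak_{n+1}[\lambda]}_{\bfrak_{n+1}[\lambda]}[\lambda^\natural]$. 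Since $\mathcal{E}^\lambda_{n+1}$ exchanges simples via $\mu \leftrightarrow \mu^\natural$, and since the integral Weyl group $W_{\gfrak_n,\hfrak}[\lambda]$ coincides with the full Weyl group $W_{\gfrak_n[\lambda], \hfrak[\lambda]}$ of $\gfrak_n[\lambda]$, the restriction of $\mathcal{E}^\lambda_{n+1}$ to $\mathcal{A}_{n+1}$ is an equivalence onto $\mathcal{B}_{n+1}$. Pulling back via the inverse equivalence $\hat{Q}^{\lambda^\natural}_n$ of \cref{prop:qhat}, both $\mathcal{E}^\lambda_{n+1} \circ Q^\lambda_{n+1}$ and $Q^{\lambda^\natural}_{n+1} \circ \mathcal{E}^\lambda_n$ become exact equivalences $\bbggO^{\gfrak_n}_{\bfrak_n}[\lambda] \to \bbggO^{\gfrak_n[\lambda]}_{\bfrak_n[\lambda]}[\lambda^\natural]$ carrying simples and Verma modules to the same objects under $\mu \leftrightarrow \mu^\natural$; a Morita-type comparison on projective covers of truncated subcategories then produces $\sigma_n$.

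Once the coherent family $(\sigma_n)_n$ is in place, the universal property of the direct limit applied to the compatible family $(\textup{q}^{\lambda^\natural}_n \circ \mathcal{E}^\lambda_n)_n$ produces the functor $\mathcal{E}^\lambda$; an inverse is produced symmetrically from $((\mathcal{E}^\lambda_n)^{-1})_n$, so $\mathcal{E}^\lambda$ is the desired equivalence, and the prescribed action $\mathcal{E}^\lambda \Llie(\mu) \cong \Llie(\mu^\natural)$ descends from its stage-wise counterpart. The principal obstacle is the coherence of the $\sigma_n$: one must verify a hexagon identity relating $\sigma_n$, $\sigma_{n+1}$, and the natural transformations $\iota$ of \cref{prop:nattransQ}. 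While conceptually clear, this verification requires delicate bookkeeping involving the explicit constructions in \cref{prop:qhat} and \cref{prop:nattransQ}, and constitutes the most technical step of the argument.
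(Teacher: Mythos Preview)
Your proposal is correct and follows essentially the same route as the paper: the paper's proof is a one-line citation of the previous proposition together with \cref{cor:preservation}, \cref{prop:limitofcalO}, and \cref{cor:homcong}, which are precisely the ingredients you assemble. You are in fact more scrupulous than the paper in flagging the coherence issue for the family $(\sigma_n)_n$; the paper treats this as automatic once the stage-wise equivalences and the direct-limit descriptions are in hand.
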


\begin{proof}
	This corollary follows from the previous proposition, Corollary~\ref{cor:preservation}, Proposition~\ref{prop:limitofcalO}, and Corollary~\ref{cor:homcong}.
\end{proof}

However, as a result of \cite[Theorem 11]{Soergel}, and Corollary~\ref{cor:soergel}, we have the following theorem.

\begin{thm}
	Let $\gfrak$ and $\gfrak'$ be root-reductive Lie algebras with Dynkin Borel subalgebras $\bfrak$ and $\bfrak'$, respectively.  Suppose that $S_{\gfrak,\bfrak}[\lambda]$ is the set of simple reflections with respect to elements of $\Sigma^+_{\gfrak[\lambda],\bfrak[\lambda]}$, and $S_{\gfrak',\bfrak'}[\lambda']$ is the set of simple reflections with respect to elements of $\Sigma^+_{\gfrak'[\lambda'],\bfrak'[\lambda']}$.  Suppose that there exists an isomorphism $\varphi:W_{\gfrak,\bfrak}[\lambda]\to W_{\gfrak',\bfrak'}[\lambda']$ of Coxeter systems $\big(W_{\gfrak,\bfrak}[\lambda],S_{\gfrak,\bfrak}[\lambda]\big)$ and $\big(W_{\gfrak',\bfrak'}[\lambda'],S_{\gfrak',\bfrak'}[\lambda']\big)$ such that 
	\begin{align}
		\varphi\big(\underline{W}_{\gfrak,\bfrak}[\lambda]\big)=\underline{W}_{\gfrak',\bfrak'}[\lambda']\,.
		\label{eq:conj}
	\end{align}
	Then, there exists an equivalence of categories $\mathcal{O}^\gfrak_\bfrak[\lambda]\cong\mathcal{O}^{\gfrak'}_{\bfrak'}[\lambda']$.
	\label{thm:conj}
\end{thm}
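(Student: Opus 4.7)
The plan is to prove this in two stages, leveraging the reduction provided by \Cref{cor:soergel} together with a level-by-level application of \cite[Theorem 11]{Soergel} to the finite-dimensional stages in the filtration $\gfrak = \bigcup_n \gfrak_n$.

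First, I would apply \Cref{cor:soergel} to obtain equivalences $\mathcal{E}^\lambda: \mathcal{O}^\gfrak_\bfrak[\lambda] \rightsquigarrow \mathcal{O}^{\gfrak[\lambda]}_{\bfrak[\lambda]}[\lambda^\natural]$ and $\mathcal{E}^{\lambda'}: \mathcal{O}^{\gfrak'}_{\bfrak'}[\lambda'] \rightsquigarrow \mathcal{O}^{\gfrak'[\lambda']}_{\bfrak'[\lambda']}[(\lambda')^\natural]$. This reduces the problem to the case in which $\lambda$ is integral for $(\gfrak,\bfrak)$ and $\lambda'$ is integral for $(\gfrak',\bfrak')$, i.e.\ $\gfrak = \gfrak[\lambda]$ and $\gfrak' = \gfrak'[\lambda']$, so that $W_{\gfrak,\bfrak}[\lambda]$ becomes the full Weyl group of $\gfrak[\lambda]$ acting on the integral weight lattice, and similarly for the primed data. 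The hypothesis \eqref{eq:conj} then becomes directly an isomorphism of Coxeter systems together with a matching of the stabilizers of the weights under the dot action, which is the precise input needed for \cite[Theorem 11]{Soergel}.

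Second, I would use \Cref{prop:limitofcalO} to realize $\mathcal{O}^\gfrak_\bfrak[\lambda]$ as the direct limit of $\Big(\bbggO^{\gfrak_n}_{\bfrak_n}[\lambda]\Big)_{n\in\mathbb{Z}_{>0}}$ along the functors $Q^\lambda_{n+1}$, and similarly write $\mathcal{O}^{\gfrak'}_{\bfrak'}[\lambda']$ as a direct limit of $\bbggO^{\gfrak'_m}_{\bfrak'_m}[\lambda']$ along the analogous $(Q^{\lambda'}_{m+1})'$. The Coxeter isomorphism $\varphi$ sends the finite set of simple reflections corresponding to $\Sigma^+_{\gfrak_n,\bfrak_n}[\lambda]$ into $S_{\gfrak',\bfrak'}[\lambda']$; since this image is finite, it lies in some $S_{\gfrak'_{m(n)},\bfrak'_{m(n)}}[\lambda']$, and by inductively interleaving the two filtrations one obtains cofinal subsequences of indices on both sides so that $\varphi$ restricts to a Coxeter isomorphism between the finite Coxeter subsystems at each matched stage. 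Combined with the stabilizer-preservation condition \eqref{eq:conj}, \cite[Theorem 11]{Soergel} then yields, at every stage, an equivalence $\bbggO^{\gfrak_n}_{\bfrak_n}[\lambda] \rightsquigarrow \bbggO^{\gfrak'_{m(n)}}_{\bfrak'_{m(n)}}[\lambda']$ sending $\Llie^{\gfrak_n}_{\bfrak_n}(\mu)$ to the simple module with the corresponding highest weight.

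The hard part will be verifying that these stagewise equivalences commute, up to natural isomorphism, with the transition functors $Q^\lambda_{n+1}$ on one side and $(Q^{\lambda'}_{m(n+1)/m(n)})'$ on the other, so that they descend to a functor between the direct limits. Here I would exploit the fact, established in \Cref{cor:preservation} together with \Cref{cor:homcong}, that $Q^\lambda_{n+1}$ preserves composition multiplicities of simples and induces isomorphisms on $\Hom$-spaces; together with the compatibility of the Soergel correspondence with parabolic induction between corresponding Levi subalgebras, this should let me check compatibility on simples and on generators of morphism spaces, which suffices.

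Finally, once the diagram of equivalences is shown to be compatible with the directed systems, the universal property of the direct limits provided by \Cref{prop:limitofcalO} gives the desired equivalence $\mathcal{O}^\gfrak_\bfrak[\lambda] \cong \mathcal{O}^{\gfrak'}_{\bfrak'}[\lambda']$; undoing the reductions via $\mathcal{E}^\lambda$ and $\mathcal{E}^{\lambda'}$ completes the proof.
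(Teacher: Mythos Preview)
Your proposal follows essentially the same strategy as the paper: reduce to the integral case via \Cref{cor:soergel}, apply \cite[Theorem~11]{Soergel} at each finite stage, then pass to the direct limit using \Cref{prop:limitofcalO}. The one notable difference is in how the finite stages on the two sides are aligned. You propose to keep the given filtrations $(\gfrak_n)$ and $(\gfrak'_m)$ fixed and interleave them via cofinal subsequences determined by where $\varphi$ sends finite sets of simple reflections. The paper instead discards the original filtration of $\gfrak'$ and manufactures a new one: enumerating $\Sigma^+_{\gfrak,\bfrak}$ as $\alpha_1,\alpha_2,\ldots$ compatibly with $(\gfrak_n)$, it lets $\alpha'_j$ be the simple root such that $\varphi$ sends $s_{\alpha_j}$ to $s_{\alpha'_j}$, and defines $\gfrak''_n\subseteq\gfrak'$ to be generated by $\hfrak'$ together with the root spaces for $\pm\alpha'_1,\ldots,\pm\alpha'_{t_n}$. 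This guarantees that the Coxeter systems for $\gfrak_n$ and $\gfrak''_n$ match \emph{exactly} at every level, with no interleaving or cofinality bookkeeping needed, and the direct limit of $(\gfrak''_n)$ is still $\gfrak'$. This buys a cleaner argument; your interleaving would also work but carries more indices. Both proofs are equally brief (indeed, terse) about the compatibility of the stagewise Soergel equivalences with the transition functors $Q^\lambda_{n+1}$, which you rightly flag as the step requiring the most care.
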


\begin{proof}
	By Corollary~\ref{cor:soergel}, we have $\mathcal{O}^\gfrak_\bfrak[\lambda]\cong\mathcal{O}^{\gfrak[\lambda]}_{\bfrak[\lambda]}[\lambda^\natural]$ and $\mathcal{O}^{\gfrak'}_{\bfrak'}[\lambda']\cong\mathcal{O}^{\gfrak'[\lambda']}_{\bfrak'[\lambda']}\big[(\lambda')^\natural\big]$.  Therefore, it suffices to assume that $\lambda$ and $\lambda'$ are both integral weights; that is, $\lambda=\lambda^\natural$, $\lambda'=(\lambda')^\natural$, $\gfrak[\lambda]=\gfrak$, $\bfrak[\lambda]=\bfrak$, $\gfrak'[\lambda']=\gfrak'$, and $\bfrak'[\lambda']=\bfrak'$.
	
	Let $n$ be a positive integer.  Define $S_{\gfrak_n,\bfrak_n}$ to be the set of simple reflections with respect to the elements of $\Sigma^+_{\gfrak_n,\bfrak_n}$.  The notation $S_{\gfrak'_n,\bfrak'_{n}}$ is defined similarly.  Set $\Sigma^+_{\gfrak_1,\bfrak_1}=\left\{\alpha_1,\alpha_2,\ldots,\alpha_{t_1}\right\}$, and for $n>1$, let $\Sigma^+_{\gfrak_n,\bfrak_n}=\Sigma^+_{\gfrak_{n-1},\bfrak_{n-1}}\cup\left\{\alpha_{t_{n-1}+1},\alpha_{t_{n-1}+2},\ldots,\alpha_{t_n}\right\}$.   Assume that $\varphi$ sends the simple reflection with respect to $\alpha_n$ to the simple reflection with respect to $\alpha'_{n}$ for every positive integer $n$. 

	Define $\gfrak''_n$ to be the subalgebra of $\gfrak'$ generated by $\hfrak'$ and the root spaces corresponding to the roots $\pm \alpha'_1$, $\pm\alpha'_2$, $\ldots$, $\pm\alpha'_{t_n}$.  Take $\bfrak''_n:=\gfrak''_n\cap\bfrak'$.  We note that the direct limits $\gfrak''$ and $\bfrak''$ of the directed systems $\left(\gfrak_n''\right)_{n\in\mathbb{Z}_{>0}}$ and $\left(\bfrak_n''\right)_{n\in\mathbb{Z}_{>0}}$ are precisely $\gfrak'$ and $\bfrak'$, respectively.  Hence, $\mathcal{O}^{\gfrak''}_{\bfrak''}[\lambda']=\mathcal{O}^{\gfrak'}_{\bfrak'}[\lambda']$.
	 
	The existence of $\varphi$ implies that, for each $n\in\mathbb{Z}_{>0}$, the Coxeter systems $\big(W_{\gfrak_n,\hfrak},S_{\gfrak_n,\bfrak_n}\big)$ and  $\big(W_{\gfrak''_{n},\hfrak'},S_{\gfrak''_{n},\bfrak''_{n}}\big)$ are isomorphic.  Therefore, by  \cite[Theorem 11]{Soergel},  there exists an equivalence of categories $\varepsilon_n:\bggO^{\gfrak_n}_{\bfrak_n}[\lambda]\rightsquigarrow\bggO^{\gfrak'_{k_n}}_{\bfrak'_{k_n}}[\lambda']$.  Applying direct limit, we obtain an equivalence $\varepsilon:\mathcal{O}^{\gfrak}_{\bfrak}[\lambda] \rightsquigarrow \mathcal{O}^{\gfrak'}_{\bfrak'}[\lambda']$.
\end{proof}

\begin{openq}
	Is the converse of Theorem~\ref{thm:conj} true?  In other words, if $\mathcal{O}^\gfrak_\bfrak[\lambda]\cong\mathcal{O}^{\gfrak'}_{\bfrak'}[\lambda']$, then does there exists an isomorphism  $\varphi:W_{\gfrak,\bfrak}[\lambda]\to W_{\gfrak',\bfrak'}[\lambda']$ of Coxeter systems $\big(W_{\gfrak,\bfrak}[\lambda],S_{\gfrak,\bfrak}[\lambda]\big)$ and $\big(W_{\gfrak',\bfrak'}[\lambda'],S_{\gfrak',\bfrak'}[\lambda']\big)$ such that \eqref{eq:conj} is true?
\end{openq}

\subsection{Construction of Translation Functors}

\begin{define}
	For $\lambda\in\hfrak^*$, we say that $\lambda$ is \emph{restricted} if $\lambda(h_\alpha)=0$ for all but finitely many $\alpha\in\Sigma_{\gfrak,\bfrak}^+$.  For $\lambda,\mu\in\hfrak^*$, we say that $\lambda$ and $\mu$ are \emph{compatible} if $\lambda-\mu\in\Lambda$ and $\lambda-\mu$ is a restricted weight.  The notation $\lambda\parallel \mu$ means that $\lambda$ and $\mu$ are compatible.
\end{define}

Denote by $\hfrak_\mathbb{Q}$ the $\mathbb{Q}$-span of the coroots $h_\alpha\in \hfrak$ with $\alpha\in\Phi=\Phi_{\gfrak,\hfrak}$.  Take $\hfrak_\mathbb{R}$ to be $\mathbb{R}\underset{\mathbb{Q}}{\otimes}\hfrak_\mathbb{Q}$.  Let $E:=E^\gfrak_\hfrak$ denote the real vector space $\Hom_\mathbb{R}\left(\hfrak_\mathbb{R},\mathbb{R}\right)$.  A root $\alpha\in\Phi$ can be identified with the unique element (which we also denote by $\alpha$) of $E$ which sends $1\underset{\mathbb{Q}}{\otimes}\beta\in \hfrak_\mathbb{R}$ to $\alpha(h_\beta)\in\mathbb{Z}$ for all $\beta\in\Phi$.    Similarly, if $\lambda\in\hfrak^*$ satisfies $\lambda(h_\beta)\in\mathbb{Q}$ for all $\beta\in\Phi$, then we identify it with the unique element of $E$ that sends $h_\beta\mapsto \lambda(h_\beta)$ for all $\beta\in\Phi$.

We decompose $E$ into facets, where a \emph{facet} $F$ of $E$ is a nonempty subset of $E$ determined by the partition of $\Phi$ into disjoint subsets $\Phi^+(F)$, $\Phi^0(F)$, and $\Phi^-(F)$, where $\lambda\in F$ if and only if all three conditions below are satisfied:
\begin{itemize}
	\item $(\lambda+\rho)(h_\alpha)>0$ when $\alpha\in \Phi^+(F)$,
	\item $(\lambda+\rho)(h_\alpha)=0$ when $\alpha\in \Phi^0(F)$, and
	\item $(\lambda+\rho)(h_\alpha)<0$ when $\alpha\in \Phi^-(F)$.
\end{itemize}
The closure $\bar{F}$ of a facet $F$ is defined to be the set of all $\lambda\in\hfrak^*$ such that
\begin{itemize}
	\item $(\lambda+\rho)(h_\alpha)\geq 0$ when $\alpha\in \Phi^+(F)$,
	\item $(\lambda+\rho)(h_\alpha)=0$ when $\alpha\in \Phi^0(F)$, and
	\item $(\lambda+\rho)(h_\alpha)\leq 0$ when $\alpha\in \Phi^-(F)$.
\end{itemize}

Recall that $\mathcal{S}$ is a (fixed) set of representatives of $[\lambda]\in\Omega^\gfrak_\bfrak$ (on which the definitions of $Q_{n+1}$ and $\text{q}_n$ depend).  Now, for $\lambda,\mu\in\mathcal{S}$ such that $\lambda\parallel \mu$, there exists a unique dominant-integral weight $\nu\in W_{\gfrak,\hfrak}(\lambda-\mu)$.  Define for all sufficiently large $n$ (i.e., for all positive integers $n$ such that $\nu\in W_{\gfrak_n,\hfrak_n}(\lambda-\mu)$) the translation functor $\left(\theta^{\gfrak_n}_{\bfrak_n}\right)^\mu_\lambda:\bbggO^{\gfrak_n}_{\bfrak_n}[\lambda]\rightsquigarrow\bbggO^{\gfrak_n}_{\bfrak_n}[\mu]$.  That is,
\begin{align}
	\left(\theta^{\gfrak_n}_{\bfrak_n}\right)^\mu_\lambda M_n\defeq \pr^\mu_{\gfrak_n,\bfrak_n}\big(\Llie^{\gfrak_n}_{\bfrak_n}(\nu)\otimes M_n\big)
\end{align}
for every $M_n\in\bbggO^{\gfrak_n}_{\bfrak_n}[\lambda]$.  Recall that $\left(\theta^{\gfrak_n}_{\bfrak_n}\right)^\mu_\lambda$ is an exact functor and it commutes with duality (see, for example, \cite[Proposition 7.1]{bggo}).

\begin{thm}
	Let $\lambda,\mu\in\hfrak^*$ be such that $\lambda\parallel \mu$.   If $\lambda^\natural$ and $\mu^\natural$ are in the same facet for the action of the integral Weyl group $W_{\gfrak,\bfrak}[\lambda]=W_{\gfrak,\bfrak}[\mu]$ on $E^{\gfrak[\lambda]}_{\hfrak[\lambda]}=E^{\gfrak[\mu]}_{\hfrak[\mu]}$, then there exists an equivalence of categories $\left(\Theta^{\gfrak}_{\bfrak}\right)^{\mu}_{\lambda}:\mathcal{O}^\gfrak_\bfrak[\lambda]\rightsquigarrow\mathcal{O}^\gfrak_\bfrak[\mu]$.
\end{thm}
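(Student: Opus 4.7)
The plan is to realize $\left(\Theta^\gfrak_\bfrak\right)^\mu_\lambda$ as a direct limit of classical translation equivalences $\left(\theta^{\gfrak_n}_{\bfrak_n}\right)^\mu_\lambda$ between blocks of the finite-dimensional BGG categories $\bggO^{\gfrak_n}_{\bfrak_n}$. First, using \Cref{cor:soergel}, I reduce to the integral case: there are equivalences $\mathcal{O}^\gfrak_\bfrak[\lambda]\cong\mathcal{O}^{\gfrak[\lambda]}_{\bfrak[\lambda]}[\lambda^\natural]$ and $\mathcal{O}^\gfrak_\bfrak[\mu]\cong\mathcal{O}^{\gfrak[\mu]}_{\bfrak[\mu]}[\mu^\natural]$, and since $\lambda\parallel\mu$ implies $\gfrak[\lambda]=\gfrak[\mu]$ and $\bfrak[\lambda]=\bfrak[\mu]$, we may assume throughout that $\lambda,\mu$ are already integral with $\gfrak[\lambda]=\gfrak=\gfrak[\mu]$ and $\bfrak[\lambda]=\bfrak=\bfrak[\mu]$.

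For each sufficiently large $n$ the classical translation principle in $\bggO^{\gfrak_n}_{\bfrak_n}$ (see \cite[Chapter 7]{bggo}) yields an equivalence of categories $\left(\theta^{\gfrak_n}_{\bfrak_n}\right)^\mu_\lambda:\bggO^{\gfrak_n}_{\bfrak_n}[\lambda]\rightsquigarrow\bggO^{\gfrak_n}_{\bfrak_n}[\mu]$, quasi-inverse to $\left(\theta^{\gfrak_n}_{\bfrak_n}\right)^\lambda_\mu$; the facet hypothesis transfers to the $\gfrak_n$-setting because the partition of $\Phi_{\gfrak_n,\hfrak_n}\subseteq\Phi$ by signs of $(\lambda+\rho)(h_\alpha)$ is just the restriction of the corresponding partition on $\Phi$, so $\lambda$ and $\mu$ remain in a common facet for $W_{\gfrak_n,\bfrak_n}[\lambda]$.

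The core technical step, which I expect to be the main obstacle, is to establish a natural isomorphism of functors
\begin{align}
Q^\mu_{n+1}\circ\left(\theta^{\gfrak_n}_{\bfrak_n}\right)^\mu_\lambda \;\cong\; \left(\theta^{\gfrak_{n+1}}_{\bfrak_{n+1}}\right)^\mu_\lambda\circ Q^\lambda_{n+1}\,,
\end{align}
for each sufficiently large $n$. Both sides are exact functors into $\bggO^{\gfrak_{n+1}}_{\bfrak_{n+1}}[\mu]$; by the characterization of the image of $Q^\mu_{n+1}$ in \Cref{cor:homcong} and by \Cref{cor:preservation}, it suffices to check the isomorphism on simples $\Llie^{\gfrak_n}_{\bfrak_n}(\xi)$ with $\xi\in[\lambda]\cap W_{\gfrak_n,\hfrak}\cdot\lambda$, where both compositions return $\Llie^{\gfrak_{n+1}}_{\bfrak_{n+1}}(\xi')$ or zero according to the common combinatorial rule in terms of the integral stabilizer $\underline{W}[\mu]$. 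The essential input is that the integrable $\gfrak_{n+1}$-module $\Llie^{\gfrak_{n+1}}_{\bfrak_{n+1}}(\nu)$ decomposes over $\gfrak_n$ into finite-dimensional simples, with $\Llie^{\gfrak_n}_{\bfrak_n}(\nu)$ as the $\nu$-highest-weight constituent, which makes the tensor products defining the two translation functors compatible with the parabolic induction $I_{n+1}$ used to define $Q$.

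Granting the compatibility, \Cref{prop:limitofcalO} together with \Cref{cor:qnfunctors} supplies, via the universal property of the direct limit, a well-defined functor $\left(\Theta^\gfrak_\bfrak\right)^\mu_\lambda:\mathcal{O}^\gfrak_\bfrak[\lambda]\rightsquigarrow\mathcal{O}^\gfrak_\bfrak[\mu]$ characterized by $\left(\Theta^\gfrak_\bfrak\right)^\mu_\lambda\circ\textup{q}^\lambda_n = \textup{q}^\mu_n\circ\left(\theta^{\gfrak_n}_{\bfrak_n}\right)^\mu_\lambda$ for every sufficiently large $n$. The symmetric construction with $\lambda$ and $\mu$ interchanged produces $\left(\Theta^\gfrak_\bfrak\right)^\lambda_\mu$, and the classical natural isomorphisms $\operatorname{Id}\cong\left(\theta^{\gfrak_n}_{\bfrak_n}\right)^\lambda_\mu\circ\left(\theta^{\gfrak_n}_{\bfrak_n}\right)^\mu_\lambda$ pass through the direct limit to give $\operatorname{Id}\cong\left(\Theta^\gfrak_\bfrak\right)^\lambda_\mu\circ\left(\Theta^\gfrak_\bfrak\right)^\mu_\lambda$, and symmetrically, yielding the desired equivalence of categories.
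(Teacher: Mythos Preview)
Your overall strategy coincides with the paper's: construct $\left(\Theta^\gfrak_\bfrak\right)^\mu_\lambda$ as a direct limit of the finite-level translation equivalences, using the description of $\mathcal{O}^\gfrak_\bfrak[\lambda]$ from \Cref{prop:limitofcalO}. One inessential difference is that you first reduce to the integral case via \Cref{cor:soergel}, whereas the paper works directly with arbitrary $\lambda,\mu$ by passing at each level $n$ to the antidominant representatives $\xi_n\in W_{\gfrak_n,\hfrak}[\lambda]\cdot\lambda$ and $\zeta_n\in W_{\gfrak_n,\hfrak}[\mu]\cdot\mu$ and setting $\Theta_n:=\left(\theta_n\right)^{\zeta_n}_{\xi_n}$.

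There is, however, a gap in your compatibility step. You assert that to establish $Q^\mu_{n+1}\circ\left(\theta^{\gfrak_n}_{\bfrak_n}\right)^\mu_\lambda\cong\left(\theta^{\gfrak_{n+1}}_{\bfrak_{n+1}}\right)^\mu_\lambda\circ Q^\lambda_{n+1}$ it ``suffices to check the isomorphism on simples''. That is not a valid principle: agreement of two exact functors on isomorphism classes of simple objects neither produces nor implies the existence of a natural isomorphism between them. Your observation about the $\gfrak_n$-branching of $\Llie^{\gfrak_{n+1}}_{\bfrak_{n+1}}(\nu)$ controls only where composition factors go, not morphisms. The paper handles this point differently: it first uses \cite[Proposition~7.8]{bggo} to see that every composition factor of $\Theta_{n+1}Q^\lambda_{n+1}M_n$ has the form $\Llie^{\gfrak_{n+1}}_{\bfrak_{n+1}}(w\cdot\zeta_n)$ with $w\in W_{\gfrak_n,\hfrak}[\mu]$, so that $\im\left(\Theta_{n+1}Q^\lambda_{n+1}\right)\subseteq\im\left(Q^\mu_{n+1}\right)$; it then invokes the inverse equivalence $\hat{Q}^\mu_n$ of \Cref{prop:qhat} to form $\pazocal{E}_n:=\Theta'_n\hat{Q}^\mu_n\Theta_{n+1}Q^\lambda_{n+1}$, a composition of equivalences and hence an auto-equivalence of $\bbggO^{\gfrak_n}_{\bfrak_n}[\lambda]$, obtaining $\Theta_{n+1}Q^\lambda_{n+1}\cong Q^\mu_{n+1}\Theta_n\pazocal{E}_n$ before passing to the direct limit.
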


\begin{proof}
For convenience, write $\bbggO_n$ for $\bbggO^{\gfrak_n}_{\bfrak_n}$.  For $\zeta,\xi\in\hfrak^*$, we also denote by $\left(\theta_n\right)^\zeta_\xi$ the functor $\left(\theta^{\gfrak_n}_{\bfrak_n}\right)^\zeta_\xi$.  We also write $W$ and $W_n$ for $W_{\gfrak,\hfrak}$ and $W_{\gfrak_n,\hfrak_n}$, respectively.  

Let $n_0$ be the smallest integer such that $\nu\in W_{n_0}(\lambda-\mu)$.  For each integer $n\geq n_0$, let $\xi_n\in W_{n}[\lambda]\cdot\lambda$ be the antidominant weight that is linked to $\lambda$,  and let $\zeta_n\in W_{n}[\mu]\cdot\mu$ be the antidominant weight that is linked to $\mu$.

Since $\lambda^\natural$ and $\mu^\natural$ are in the same facet  for the action of $W[\lambda]=W[\mu]$ on $E^{\gfrak[\lambda]}_{\hfrak[\lambda]}=E^{\gfrak[\mu]}_{\hfrak[\mu]}$,  there exists $w_n\in W_n[\lambda]=W_n[\mu]$ such that $w_n\cdot\lambda = \xi_n$ and $w_n\cdot\mu=\zeta_n$.  From \cite[Theorem 7.8]{bggo}, we conclude that 
\begin{align}\Theta_n:=\left(\theta_n\right)^{\zeta_n}_{\xi_n}:\bbggO_n[\lambda]\rightsquigarrow\bbggO_n[\mu]\end{align} is an equivalence of categories whose equivalence is \begin{align}\Theta'_n:=(\theta_n)^{\xi_n}_{\zeta_n}:\bbggO_n[\mu]\rightsquigarrow\bbggO[\lambda]\,.\end{align}

 Recall the functors $\hat{Q}^\lambda_n:\im\left(Q^\lambda_{n+1}\right)\rightsquigarrow \bggO^\lambda_n$ and $\hat{Q}^\lambda_n:\im\left(Q^\lambda_{n+1}\right)\rightsquigarrow \bggO^\lambda_n$ and $\hat{Q}^\mu_n:\im\left(Q^\mu_{n+1}\right)\rightsquigarrow \bggO^\mu_n$ from Proposition~\ref{prop:qhat}.  Furthermore, due to \cite[Proposition 7.8]{bggo} any object in $\im\left(\Theta_{n+1}Q_{n+1}^\lambda\right)$ must have composition factors of the form $\Llie^{\gfrak_{n+1}}_{\bfrak_{n+1}}\left(w\cdot\zeta_n\right)$, where $w \in W_n[\mu]\cdot \mu$.  Therefore, $\im\left(\Theta_{n+1}Q_{n+1}^\lambda\right)$ is a subcategory of $\im\left(Q_{n+1}^\mu\right)$.   Let
\begin{align}
\pazocal{E}_{n}:=\Theta_n'\hat{Q}_n^\mu\Theta_{n+1}Q^\lambda_{n+1}:\bbggO_n[\lambda]\rightsquigarrow\bbggO_n[\lambda]\,.
\end{align}
Clearly, $\pazocal{E}_n$ is an auto-equivalence of $\bbggO_n[\lambda]$ (since $Q^\lambda_{n+1}$ is an equivalence on to its image, $\Theta_{n+1}$ and $\Theta_n$ are both equivalences, and $\hat{Q}^\mu_n$ is an equivalence).  Consequently, 
\begin{align}\Theta_{n+1} Q^\lambda_{n+1}= Q^\mu_{n+1}  \Theta_n \pazocal{E}_n\cong Q^\mu_{n+1}\Theta_n\,.\end{align}
Write $\mathcal{O}$ for $\mathcal{O}^\gfrak_\bfrak$.  We can now let $\Theta:\mathcal{O}[\lambda]\rightsquigarrow\mathcal{O}[\mu]$ be the direct limit of the directed system of functors $\left(\Theta_n:\bbggO_n[\lambda]\rightsquigarrow\bbggO_n[\mu]\right)_{n\in\mathbb{Z}_{>0}}$.  Similarly,  $\Theta':\mathcal{O}[\mu]\rightsquigarrow\mathcal{O}[\lambda]$ is the direct limit of the directed system of functors $\left(\Theta'_n:\bbggO_n[\mu]\rightsquigarrow\bbggO_n[\lambda]\right)_{n\in\mathbb{Z}_{>0}}$.  As each $\Theta_n$ is an equivalence of categories with inverse $\Theta'_n$, we deduce that $\Theta$ is also an equivalence of categories with inverse $\Theta'$.  We set $\left(\Theta^\gfrak_\bfrak\right)^\mu_\lambda$ to be the functor $\Theta$.
\end{proof}

From the previous theorem, we have defined a "translation functor" $\left(\Theta^\gfrak_\hfrak\right)^{\mu}_{\lambda}$ when $\mu$ and $\lambda$ are compatible and lie in the same facet for the action of $W_{\gfrak,\bfrak}[\lambda]=W_{\gfrak,\bfrak}[\mu]$.  For arbitrary $\lambda,\mu\in\hfrak^*$ such that $\lambda\parallel \mu$, it is not clear whether the same construction yields a functor $\left(\Theta^\gfrak_\hfrak\right)^\mu_\lambda:\mathcal{O}^\gfrak_\bfrak[\lambda]\rightsquigarrow\mathcal{O}^\gfrak_\bfrak[\mu]$.  


\section{Tilting Modules}

\subsection{Extensions of Modules with Generalized Standard and Costandard Filtrations}

In this subsection, we shall write $\Hom$ and $\Ext$ for $\Hom_\bbggO$ and $\Ext_\bbggO$.   We shall first prove that any extension of a module with generalized costandard filtration by a module with generalized standard filtration is trivial.

\begin{thm}
	Let $M\in\boldsymbol{\Delta}(\bbggO)$ and $N\in \boldsymbol{\nabla}(\bbggO)$.  Then, $\Ext^1(M,N)=0$.
	\label{thm:extstandardcostandard}
\end{thm}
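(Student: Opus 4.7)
The plan is to reduce the vanishing to the base case $\Ext^1_{\bbggO}\bigl(\Delta(\lambda),\nabla(\mu)\bigr)=0$ and then bootstrap via two layers of transfinite induction, first on the generalized co-standard filtration of $N$ and then on the generalized standard filtration of $M$.

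For the base case, I choose $\eta\in\hfrak^*$ with $\eta\succeq\lambda,\mu$. Any extension of $\Delta(\lambda)$ by $\nabla(\mu)$ in $\bbggO$ already lives in the full Serre subcategory of $\bbggO$ whose objects have all weights $\preceq\eta$, so the Ext group can be computed there. Inside this truncated category the truncated projective cover $\Plie^{\preceq\eta}(\lambda)$ of $\Llie(\lambda)$ is projective and has a standard filtration with top $\Delta(\lambda)$, yielding $0\to K\to\Plie^{\preceq\eta}(\lambda)\to\Delta(\lambda)\to 0$ with $K\in\boldsymbol{\Delta}(\bbggO)$. Applying $\Hom(-,\nabla(\mu))$ and using projectivity of $\Plie^{\preceq\eta}(\lambda)$, the vanishing $\Ext^1(\Delta(\lambda),\nabla(\mu))=0$ reduces to surjectivity of the map $\Hom\bigl(\Plie^{\preceq\eta}(\lambda),\nabla(\mu)\bigr)\to\Hom(K,\nabla(\mu))$. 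This follows by a dimension count using $\dim\Hom\bigl(\Delta(\sigma),\nabla(\mu)\bigr)=\delta_{\sigma,\mu}$ together with a BGG-reciprocity identity $\bigl[\Plie^{\preceq\eta}(\lambda):\Delta(\sigma)\bigr]=\bigl[\nabla(\sigma):\Llie(\lambda)\bigr]$ from \cite{DVN1}.

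Next I bootstrap to $\Ext^1\bigl(\Delta(\lambda),N\bigr)=0$ for arbitrary $N\in\boldsymbol{\nabla}(\bbggO)$. Fix a generalized co-standard filtration $(N_k)_{k\in K}$ of $N$ with $K$ well-ordered. The crucial observation is that any extension $0\to N\to E\to\Delta(\lambda)\to 0$ arises as the pushout along $N_k\hookrightarrow N$ of a sub-extension $0\to N_k\to E'\to\Delta(\lambda)\to 0$ for some sufficiently large $k$: lifting the maximal vector $v_\lambda\in\Delta(\lambda)$ to a weight vector $\tilde{v}\in E^\lambda$, the obstruction $\nfrak^+\cdot\tilde{v}\subseteq N$ is finite-dimensional (by local $\nfrak^+$-finiteness and the finite-dimensionality of $\hfrak$-weight spaces), and its $\gfrak$-submodule closure therefore lies in some $N_k$, giving $E'=N_k+\Ulie(\gfrak)\cdot\tilde{v}$. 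Transfinite induction on $K$ using the base case and the long exact $\Ext$-sequence at successor steps yields $\Ext^1(\Delta(\lambda),N_k)=0$, so $E'\to\Delta(\lambda)$ splits, and consequently so does the original $E\to\Delta(\lambda)$.

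Finally, for general $M\in\boldsymbol{\Delta}(\bbggO)$ with generalized standard filtration $(M_j)_{j\in J}$ and any extension $0\to N\to E\xrightarrow{\pi}M\to 0$, I construct a $\gfrak$-linear section $\sigma:M\to E$ of $\pi$ by transfinite induction on $J$. At a successor stage, pushing the extension forward by the already-constructed $\sigma$ on $M_{\prec j}:=\bigcup_{k\prec j}M_k$ and quotienting gives an extension $0\to N\to\tilde{E}_j\to\Delta(\lambda_j)\to 0$ with $\Delta(\lambda_j)\cong M_j/M_{\prec j}$; by the previous step this splits, and gluing the splitting with $\sigma$ on $M_{\prec j}$ extends $\sigma$ to $M_j$. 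At a limit stage, the section is defined simply as the union of the compatible partial sections. Since $M=\bigcup_{j\in J}M_j$, this process yields the desired global section. The main obstacle is the base case, where one must carefully verify that the projectivity of truncated projective covers and the BGG reciprocity identity hold in the infinite-rank root-reductive setting, both of which are substantial generalizations of the classical theory provided by \cite{DVN1}.
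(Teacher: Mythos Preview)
Your overall strategy differs substantially from the paper's. The paper first fixes $N=\nabla(\mu)$ and inducts on the finite quantity $m=\sum_{\xi\succ\mu}\dim M^\xi$ (after a block argument disposing of the case where this sum is infinite), obtaining $\Ext^1\bigl(M,\nabla(\mu)\bigr)=0$ for all $M\in\boldsymbol{\Delta}(\bbggO)$. It then handles general $N$ by a transfinite induction on a well-ordered \emph{quotient} filtration of $N$ (obtained by dualizing an increasing standard filtration of $N^\vee$), with the limit-ordinal step requiring a rather delicate compactness argument involving nested finite-dimensional subspaces $\mathcal{X}^\delta_{\gamma'}$.

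Two issues with your proposal. First, your base-case argument via truncated projectives and BGG reciprocity is at best redundant and at worst circular: to compute $\dim\Hom\bigl(K,\nabla(\mu)\bigr)$ from the standard filtration of $K$ you already need $\Ext^1\bigl(\Delta(\sigma),\nabla(\mu)\bigr)=0$. Just cite \cite[Proposition~3.9(c)]{DVN1} as the paper does. Second, and more seriously, your step~2 assumes a well-ordered \emph{increasing} generalized co-standard filtration $(N_k)_{k\in K}$ of $N$ by submodules. The definition only guarantees a totally ordered one, and dualizing the well-ordered increasing standard filtration of $N^\vee$ (as constructed in Proposition~\ref{prop:standardfiltration}(b)) produces a well-ordered \emph{decreasing} filtration of $N$ by submodules, for which your pushout argument does not apply. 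Without well-foundedness, your transfinite induction on $K$ has no base to stand on.

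That said, your step~3 is sound and is arguably cleaner than the paper's limit-ordinal machinery: building a global section as a transfinite union of compatible partial sections is direct, and the successor step via the quotient $E_j/\sigma(M_{\prec j})$ works. In fact you can rescue the whole proof by discarding step~2 and running step~3 \emph{twice}: first with $N=\nabla(\mu)$ (the successor step only needs the base case $\Ext^1\bigl(\Delta(\lambda_j),\nabla(\mu)\bigr)=0$), yielding $\Ext^1\bigl(M,\nabla(\mu)\bigr)=0$ for all $M\in\boldsymbol{\Delta}(\bbggO)$; then dualize to obtain $\Ext^1\bigl(\Delta(\lambda),N\bigr)=0$ for all $N\in\boldsymbol{\nabla}(\bbggO)$; then run step~3 once more with general $N$. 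This uses only well-ordered \emph{standard} filtrations, which do exist.
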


\begin{proof}
	If $M$ has a direct sum decomposition $M=\bigoplus_{\alpha\in A}\,M_\alpha$, then $\Ext^1(M,N)\cong\prod_{\alpha \in A}\,\Ext^1(M_\alpha,N)$.  Thus, we may assume without loss of generality that $M$ is indecomposable.  
	
	We first prove the theorem when $N=\nabla(\mu)$ for some $\mu \in \hfrak^*$.  Let $\Pi_{\succ \mu}(M)$ denote the set of weights of $M$ that are greater than $\mu$.  Note that $M$ is in the block $\bbggO[\lambda]$ for some $\lambda\in\hfrak^*$.  If $\Pi_{\succ \mu}(M)$ has infinitely many maximal elements, then we enumerate the maximal elements of $\Pi_{\succ \mu}(M)$ by $\lambda_1,\lambda_2,\lambda_3,\ldots$.  Note that $\lambda_i\in W[\lambda]\cdot\lambda$ for all $i=1,2,3,\ldots$.  This implies that $\lambda-\mu$ is not a finite integer combination of the simple roots.  Therefore, $\mu$ and $\lambda$ are not in the same Weyl orbit.  Thus, $\Ext^1(M,N)=\Ext^1\big(M,\nabla(\mu)\big)=0$.  From now on, we assume that $\Pi_{\succ \mu}(M)$ has finitely many maximal elements.
	
	We perform induction on the sum $m:=\sum_{\xi\succ \mu}\,\dim M^\xi$, which is finite due to the assumption in the previous paragraph.  If $m=0$, then by \cite[Proposition 3.8(a)]{DVN1} and \cite[Proposition 3.9(a)]{DVN1}, we get $\Ext^1\big(M,\nabla(\mu)\big)\cong \Ext^1\big(\Delta(\mu),M^\vee\big)=0$. 
	
	Let now $m$ be a positive integer.  Fix a maximal weight $\xi\in \Pi_{\succ \mu}$.  By Proposition~\ref{prop:standardfiltration}(a), there exists a short exact sequence $0\to M' \to M \to M'' \to 0$, where $M'$ is isomorphic to $\Delta(\xi)$, and $M''$ has a generalized standard filtration.  From the long exact sequence of $\Ext^\bullet$, we get the following exact sequence
	\begin{align}
		\Ext^1\big(M'',\nabla(\mu)\big)\to \Ext^1\big(M,\nabla(\mu)\big)\to \Ext^1\big(M',\nabla(\mu)\big)\,.
	\end{align}
	However, due to \cite[Proposition 3.9(c)]{DVN1}, we get $ \Ext^1\big(M',\nabla(\mu)\big)\cong \Ext^1\big(\Delta(\xi),\nabla(\mu)\big)=0$.  By induction hypothesis, $\Ext^1\big(M'',\nabla(\mu)\big)=0$.  Therefore, $\Ext^1\big(M,\nabla(\mu)\big)=0$ as well.
	
	For each ordinal number $\gamma$, we shall define a submodule $X_\gamma \in \boldsymbol{\Delta}(\bbggO)$ of $N^\vee$ such that $N^\vee /X_\gamma$ is also in $\boldsymbol{\Delta}(\bbggO)$.  First, we set $X_0:=0$.  If $\gamma$ is an ordinal with predecessor $\gamma'$, then we have by Proposition~\ref{prop:standardfiltration}(a) that $N^\vee/X_{\gamma'}$ has a submodule $Y_\gamma$ such that $Y_\gamma\cong \Delta(\mu_\gamma)$ for some $\mu_\gamma\in\hfrak^*$ that is a maximal weight of $N^\vee/X_{\gamma'}$.  Take $X_\gamma$ to be the preimage of $Y_\gamma$ under the canonical projection $N^\vee \twoheadrightarrow \left(N^\vee/X_{\gamma'}\right)$.   If $\gamma$ is a limit ordinal, then we set $X_\gamma$ to be $\bigcup_{\gamma'<\gamma}\,X_{\gamma'}$.
	
	From the above construction, there exists the smallest ordinal $\kappa$ such that $N^\vee =X_\kappa$ , and $N^\vee$ is in fact the direct limit of $(X_\gamma)_{\gamma\leq \kappa}$.  Thus, $N$ is the inverse limit of $(N_\gamma)_{\gamma\leq \kappa}$, where $N_{\gamma}:=\left(X_\gamma\right)^\vee$.  We claim that $\Ext^1\big(M,N_\gamma\big)=0$ for all ordinals $\gamma\leq \kappa$.
	
	If $\gamma$ has a predecessor $\gamma'$, then there exists a short exact sequence $0\to \nabla(\mu_\gamma)\to N_\gamma\to N_{\gamma'}\to 0$.  From the long exact sequence of $\Ext^\bullet$, we have the following exact sequence
	\begin{align}
		 \Ext^1\big(M,\nabla(\mu_\gamma)\big)\to \Ext^1\big(M,N_\gamma\big)\to \Ext^1\big(M,N_{\gamma'}\big)\,.
		 \label{eq:longextexactgamma}
	\end{align}
	By the induction hypothesis, $\Ext^1\big(M,N_{\gamma'}\big)=0$.  Since we have proven that $ \Ext^1\big(M,\nabla(\mu_\gamma)\big)=0$, we can then conclude that $\Ext^1\big(M,N_\gamma\big)=0$.  If $\gamma$ is a limit ordinal, then $X_\gamma=\lim_{\substack{\longrightarrow\\\gamma'<\gamma}}\,X_{\gamma'}$; ergo,
	\begin{align}
		\Ext^1\big(M,N_\gamma\big)\cong \Ext^1\big((N_\gamma)^\vee,M^\vee\big) = \Ext^1\big(X_\gamma,M^\vee\big)\,.
		\label{eq:extisomorphism}
	\end{align}	
	Now, let $E$ be a $\gfrak$-module such that there exists an exact sequence $0\to M^\vee \overset{i}{\longrightarrow} E \overset{p}{\longrightarrow} X_\gamma\to 0$.   For each $\gamma'<\gamma$, we know that $\Ext^1\big(X_{\gamma'},M^\vee\big)\cong \Ext^1\big(M,N_{\gamma'}\big)=0$ by induction hypothesis, whence $p^{-1}\left(X_{\gamma'}\right)=i(M^\vee)\oplus \tilde{X}_{\gamma'}$, where $M_{\gamma'}$ and $\tilde{X}_{\gamma'}$ are submodules of $p^{-1}\left(X_{\gamma'}\right)$ such that $M_{\gamma'}\cong M^\vee$ and $\tilde{X}_{\gamma'}\cong X_{\gamma'}$.   We shall now define $v_{\gamma'}\in \mathscr{X}_{\gamma'}$ whenever $\gamma'$ has a predecessor $\gamma''$.  
	
	If $\gamma'=1$, then $X_{\gamma'}\cong\Delta(\mu_{\gamma'})$ for some $\mu_{\gamma'}\in \hfrak^*$.  For each ordinal $\delta$ such that $\gamma'\leq \delta<\gamma$, define $\mathcal{X}^\delta_{\gamma'}$ to be the span of all possible $v^\delta_{\gamma'}\in p^{-1}(X_{\gamma'})$ such that $v_{\gamma'}^\delta$ is a weight vector of weight $\mu_{\gamma'}$ that lies in some submodule $\tilde{X}_{\delta}\cong X_\delta$ of $p^{-1}(X_\delta)$ such that $p^{-1}(X_\delta)=i(M^\vee)\oplus \tilde{X}_{\delta}$ and that $v^\delta_{\gamma'}$ generates $\tilde{X}_{\gamma'}\cong\Delta(\mu_{\gamma'})$.  Note that $\mathcal{X}^\delta_{\gamma'}$ is a finite-dimensional vector space with positive dimension, and $\mathcal{X}^{\delta'}_{\gamma'}\supseteq \mathcal{X}^{\delta}_{\gamma'}$  if $\gamma'<\delta'<\delta<\gamma$.  Therefore, $\mathcal{X}_{\gamma'}:=\bigcap_{\delta\in[\gamma',\gamma)}\,\mathcal{X}^{\delta}_{\gamma'}\neq 0$.  We can choose $v_{\gamma'}\in \mathcal{X}_{\gamma'}\setminus\{0\}$ arbitrarily.
	
	Let now $\gamma'$ be an ordinal such that $1<\gamma'<\gamma$ and $\gamma'$ has a predecessor $\gamma''$.  Suppose $v_\tau$ are all known for each $\tau<\gamma'$ such that $\tau$ has a predecessor.  Set $Z_{\gamma''}$ to be the $\gfrak$-module generated by all such $v_\tau$.  The choices of our vectors $v_\tau$ are to ensure that $Z_{\gamma''}\cong X_{\gamma''}$ is such that $p^{-1}(X_{\gamma''})=i(M^\vee)\oplus Z_{\gamma''}$.  Assume that $X_{\gamma'}/X_{\gamma''}\cong\Delta(\mu_{\gamma'})$ for some $\mu_{\gamma'}\in \hfrak^*$.   For each ordinal $\delta$ such that $\gamma'\leq \delta<\gamma$, define $\mathcal{X}^\delta_{\gamma'}$ to be the span of all possible $v^\delta_{\gamma'}\in p^{-1}(X_{\gamma'})$ such that $v_{\gamma'}^\delta$ is a weight vector of weight $\mu_{\gamma'}$ that lies in some submodule $\tilde{X}_{\delta}\cong X_\delta$ of $p^{-1}(X_\delta)$ such that $p^{-1}(X_\delta)=i(M^\vee)\oplus \tilde{X}_{\delta}$ and that $v^\delta_{\gamma'}+Z_{\gamma'}$ generates $\tilde{X}_{\gamma'}/Z_{\gamma''}\cong\Delta(\mu_{\gamma'})$.  We employ the same strategy as the previous paragraph by choosing $v_{\gamma'}\in\mathcal{X}_{\gamma'}\setminus Z_{\gamma''}$, where $\mathcal{X}_{\gamma'}:=\bigcap_{\delta\in[\gamma',\gamma)}\,\mathcal{X}^{\delta}_{\gamma'}$.
	
	Now, we let $\tilde{X}_\gamma$ be the submodule of $E$ generated by $v_{\gamma'}$ for all ordinals $\gamma'<\gamma$ with predecessors.  It follows immediately that $\tilde{X}_\gamma\cong X_\gamma$, $i(M^\vee)\cap \tilde{X}_\gamma=0$, and $i(M^\vee)+\tilde{X}_{\gamma}=E$.  Thus, $E=i(M^\vee)\oplus\tilde{X}_\gamma$.  Then, the projection $\varpi:E\to i(M^\vee)$ gives a retraction map $E\to M^\vee$.  Therefore, the short exact sequence $0\to M^\vee \to E \to X_{\gamma}\to 0$ splits.  That is, $\Ext^1(X_\gamma,M^\vee)=0$.  Then, \eqref{eq:extisomorphism} implies that $\Ext^1(M,N_\gamma)=0$ as well.  By transfinite induction, $\Ext^1(M,N)=\Ext^1(M,N_\kappa)=0$.
\end{proof}

\begin{conj}
	Let $M\in\boldsymbol{\Delta}(\bbggO)$ and $N\in \boldsymbol{\nabla}(\bbggO)$.  Then, $\Ext^k(M,N)=0$ for every integer $k>1$.
	\label{conj:extstandardcostandard}
\end{conj}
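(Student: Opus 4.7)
The natural approach is induction on $k$, with base case $k=1$ given by Theorem~\ref{thm:extstandardcostandard}. Assume $\Ext^j(M', N') = 0$ for $1 \le j \le k-1$ and all $M' \in \boldsymbol{\Delta}(\bbggO)$, $N' \in \boldsymbol{\nabla}(\bbggO)$. Since $\Ext^k$ converts direct sums in the first variable into products, one may reduce to the case that $M$ is indecomposable. The strategy is then a double transfinite induction mirroring the proof of Theorem~\ref{thm:extstandardcostandard}: first reduce to $N = \nabla(\mu)$, then to $M = \Delta(\lambda)$, and finally invoke the vanishing of $\Ext^k\big(\Delta(\lambda), \nabla(\mu)\big)$ for all $k \ge 1$.

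For the first reduction, use the transfinite filtration $\{N_\gamma\}_{\gamma \le \kappa}$ of $N$ built in the proof of Theorem~\ref{thm:extstandardcostandard}, with short exact sequences $0 \to \nabla(\mu_\gamma) \to N_\gamma \to N_{\gamma'} \to 0$ at successor ordinals. The long exact sequence
\begin{equation*}
\Ext^{k-1}(M, N_{\gamma'}) \to \Ext^k\big(M, \nabla(\mu_\gamma)\big) \to \Ext^k(M, N_\gamma) \to \Ext^k(M, N_{\gamma'})
\end{equation*}
has vanishing $\Ext^{k-1}$-term by the outer induction on $k$, so once $\Ext^k\big(M, \nabla(\mu_\gamma)\big) = 0$ and $\Ext^k(M, N_{\gamma'}) = 0$ are established, $\Ext^k(M, N_\gamma) = 0$ follows. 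The second reduction filters $M$ ascendingly via Proposition~\ref{prop:standardfiltration}(a), peeling off $\Delta(\xi)$'s from the bottom for maximal weights $\xi$ and iterating transfinitely through a dual long exact sequence. This reduces the statement to $\Ext^k\big(\Delta(\lambda), \nabla(\mu)\big) = 0$ for all $k \ge 1$, which for the classical BGG category $\bggO$ of a finite-dimensional reductive Lie algebra is standard, and should lift via the direct-limit description in Proposition~\ref{prop:limitofcalO} together with the fact that $\Delta(\lambda)$ and $\nabla(\mu)$ are compatible (co)limits of their finite-level analogues.

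The main obstacle will be the limit-ordinal step in both transfinite inductions. At a limit ordinal $\gamma$, the module $N_\gamma$ is the inverse limit of the $N_{\gamma'}$, and $\Ext^k(M, -)$ does not automatically commute with inverse limits; the reduction on $M$ presents the dual problem with direct limits of modules having generalized standard filtrations. For $k=1$ the author circumvented this with an explicit element-by-element construction of a splitting, exploiting the fact that a $1$-extension is classified by a single cocycle. For $k \ge 2$ such a constructive approach seems unavailable, and one will likely need either a Mittag-Leffler-type condition on the relevant inverse system to kill derived limit $\varprojlim^{1}$ terms, or a Grothendieck-style spectral sequence relating $\Ext^{k}(M, \varprojlim N_{\gamma'})$ to $\varprojlim \Ext^{k-i}(M, N_{\gamma'})$. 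A safer alternative might be to avoid the pointwise limit argument entirely by constructing, for each $M \in \boldsymbol{\Delta}(\bbggO)$, an honest resolution by modules with generalized standard filtrations whose $\Hom$ into $\boldsymbol{\nabla}(\bbggO)$ is computable — likely using the truncated projective covers $\Plie^{\preceq\lambda}(\mu)$ from the example after Theorem~\ref{thm:goodfilter}.
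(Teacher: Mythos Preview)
The statement you are attempting to prove is presented in the paper as a \emph{conjecture}, not a theorem; the paper offers no proof at all. There is therefore no ``paper's own proof'' to compare your proposal against.

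As for the proposal itself: it is not a proof but an honest outline with a self-diagnosed gap, and that gap is exactly the reason the statement is a conjecture. Your reduction to $N=\nabla(\mu)$ via the transfinite costandard filtration is the natural thing to try, and the successor-ordinal step goes through cleanly using the long exact sequence and the outer induction on $k$. The limit-ordinal step, however, is genuinely problematic: for $k=1$ the paper bypassed the failure of $\Ext^1(M,-)$ to commute with inverse limits by an explicit hands-on construction of a splitting section, and as you note, no such cocycle-level argument is available for $k\ge 2$. Invoking a $\varprojlim^1$ spectral sequence or a Mittag--Leffler condition is plausible but would require verifying nontrivial finiteness or surjectivity hypotheses on the tower $\big(\Ext^{k-1}(M,N_{\gamma'})\big)_{\gamma'<\gamma}$, none of which follow from what is established in the paper.

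Two smaller points. First, your $M$-reduction need not be transfinite: once $N=\nabla(\mu)$, the paper's own argument shows that either $M$ and $\nabla(\mu)$ lie in different blocks (and all Ext groups vanish), or $\sum_{\xi\succeq\mu}\dim M^\xi$ is finite, so ordinary induction on that integer suffices. Second, the base case $\Ext^k\big(\Delta(\lambda),\nabla(\mu)\big)=0$ for $k\ge 2$ is not supplied anywhere in the paper or in the cited \cite{DVN1}; your appeal to the direct-limit description of $\mathcal{O}^\gfrak_\bfrak[\lambda]$ only covers objects of finite length, and $\Delta(\lambda)$, $\nabla(\mu)$ need not have finite length in $\bbggO$, so the reduction to the finite-level categories $\bggO^{\gfrak_n}_{\bfrak_n}$ is not immediate. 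In short, your outline correctly isolates the two genuine obstructions --- the limit-ordinal step and the higher-Ext base case --- but does not resolve either, which is consistent with the paper leaving the statement open.
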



\begin{prop}
	Let $M\in\bbggO$ be a tilting module.  
	\begin{enumerate}[(a)]
		\item The dual $M^\vee$ is also a tilting module.
		\item If $N\in\bbggO$ is a tilting module, then $M\oplus N$ is also a tilting module.
		\item  Any direct summand of $M$ is a tilting module.
	\end{enumerate}
		\label{prop:tiltingprops}
\end{prop}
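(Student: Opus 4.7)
The plan is to establish each of the three parts by reducing to previously established results: parts (b) and (c) are largely bookkeeping with generalized filtrations, while part (a) requires unwinding how the duality functor $(\_)^\vee$ interacts with subobjects of a filtration.

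For part (c), I would invoke \Cref{prop:standardfiltration}(b) to conclude that a direct summand of $M \in \boldsymbol{\Delta}(\bbggO)$ lies in $\boldsymbol{\Delta}(\bbggO)$, and apply \Cref{cor:costandardfiltration}(b) to deduce that the same direct summand lies in $\boldsymbol{\nabla}(\bbggO)$. Hence it lies in $\pazocal{D} = \boldsymbol{\Delta}(\bbggO) \cap \boldsymbol{\nabla}(\bbggO)$. For part (b), given generalized standard filtrations $(M_j)_{j \in J}$ of $M$ and $(N_k)_{k \in K}$ of $N$, I would form a new index set $J \sqcup K$ with every element of $J$ strictly below every element of $K$, and define the filtration of $M \oplus N$ to be $M_j$ for $j \in J$ and $M \oplus N_k$ for $k \in K$. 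The quotients along $J$ are the original $M_j/M_{\prec j} \in \boldsymbol{\Delta}$; for $k \in K$ not the least element the quotient is $N_k/N_{\prec k} \in \boldsymbol{\Delta}$; and if $K$ has a least element $k_0$, the quotient is $(M \oplus N_{k_0}) / M \cong N_{k_0} \in \boldsymbol{\Delta}$. The same argument with $\boldsymbol{\nabla}$-filtrations handles the costandard side.

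For part (a), the main work is to promote a generalized standard filtration $(M_j)_{j \in J}$ of $M$ to a generalized costandard filtration of $M^\vee$ (and symmetrically). Because $(\_)^\vee$ is exact and contravariant on $\bbggO$ with $\Delta(\lambda)^\vee \cong \nabla(\lambda)$, applying it to the short exact sequence $0 \to M_j \to M \to M/M_j \to 0$ produces an embedding $(M/M_j)^\vee \hookrightarrow M^\vee$, whose image I would denote $M_j^\perp$. For $j \prec k$ the surjection $M/M_j \twoheadrightarrow M/M_k$ dualizes to $M_k^\perp \subseteq M_j^\perp$, so re-indexing by the opposite order $\preceq^{\mathrm{op}}$ on $J$ produces a strictly increasing chain of subobjects of $M^\vee$. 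The successive quotients $M_k^\perp/\bigcup_{j \prec k} M_j^\perp$ are dual to the successive subquotients of the original filtration, hence are of the form $\nabla(\lambda_k)$.

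The main obstacle in part (a) is verifying the exhaustion condition $\bigcup_j M_j^\perp = M^\vee$ (and dually $\bigcap_j M_j^\perp = 0$), since duality on $\bbggO$ does not a priori commute with arbitrary colimits. I would handle this weight space by weight space: for each $\mu \in \hfrak^*$, the weight space $M^\mu$ is finite-dimensional, so only finitely many indices $j \in J$ contribute to a change of $(M_j)^\mu$ with increasing $j$. The exact sequences $0 \to (M/M_j)^\vee \to M^\vee \to (M_j)^\vee \to 0$ then stabilize on $\mu$-weight spaces, from which $\bigcup_j M_j^\perp = M^\vee$ follows by comparing formal characters via \Cref{thm:goodfilter}. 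With the filtration in place, the symmetric argument starting from a generalized costandard filtration of $M$ produces a generalized standard filtration of $M^\vee$; together, $M \in \pazocal{D}$ implies $M^\vee \in \pazocal{D}$.
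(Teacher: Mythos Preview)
Your argument is correct. Part (c) is handled exactly as in the paper, by invoking \Cref{prop:standardfiltration}(b) and \Cref{cor:costandardfiltration}(b). For parts (a) and (b) the paper's own proof is a single word (``trivial''), so your explicit constructions simply supply the details the paper omits; the passage from $\boldsymbol{\Delta}(\bbggO)$ to $\boldsymbol{\nabla}(\bbggO)$ under $(\_)^\vee$ that you spell out is the same fact the paper already uses implicitly when deducing \Cref{thm:goodfilter}(b) from (a) and \Cref{cor:costandardfiltration} from \Cref{prop:standardfiltration}. Your weight-space argument for the exhaustion condition $\bigcup_j M_j^\perp = M^\vee$ is exactly the right one.

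One small remark on (a): your claim that the successive quotients $M_k^\perp/\bigcup_{j\prec^{\mathrm{op}} k} M_j^\perp$ are co-Verma modules is cleanest when every non-maximal index of $J$ has an immediate successor in the original order, since then $\bigcup_{j\succ k} M_j^\perp = M_{k^+}^\perp$ and the quotient is $(M_{k^+}/M_k)^\vee$. For a general totally ordered $J$ this identification needs a little more care, but one can always sidestep the issue: the transfinite construction in the proof of \Cref{prop:standardfiltration}(b) (applied to $M$ as a summand of itself) produces a generalized standard filtration indexed by an ordinal, and then the dual filtration on the opposite order has the required property.
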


\begin{proof}
Parts (a) and (b) are trivial.  Part (c) follows from Proposition~\ref{prop:standardfiltration}(b) and Corollary~\ref{cor:costandardfiltration}.  
\end{proof}

\begin{prop}
	Let $M$ and $N$ be tilting modules in $\bbggO$.  Then, $\Ext^1_{\bbggO}(M,N)=0$.  If Conjecture~\ref{conj:extstandardcostandard} is true, then we also have that $\Ext^k_{\bbggO}(M,N)=0$ for all integers $k>1$.
	\label{prop:tiltingext}
\end{prop}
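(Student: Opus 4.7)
The plan is to derive both vanishing statements directly from \Cref{thm:extstandardcostandard} and \Cref{conj:extstandardcostandard}, using only the definition $\pazocal{D} = \boldsymbol{\Delta}(\bbggO) \cap \boldsymbol{\nabla}(\bbggO)$ of the tilting subcategory. Since $M$ is tilting, it lies, in particular, in $\boldsymbol{\Delta}(\bbggO)$; symmetrically, since $N$ is tilting, it lies in $\boldsymbol{\nabla}(\bbggO)$. The hypotheses of \Cref{thm:extstandardcostandard} are then met by the pair $(M,N)$, so $\Ext^1_{\bbggO}(M, N) = 0$ follows immediately. The analogous remark with $k > 1$ and \Cref{conj:extstandardcostandard} in place of \Cref{thm:extstandardcostandard} yields the second assertion.

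There is essentially no obstacle at this step -- the entire content of the proposition is repackaged from the earlier theorem and conjecture. The only point worth flagging is that, although a tilting module carries both a generalized standard and a generalized costandard filtration, only one of the two filtrations on each of $M$ and $N$ is actually needed in the argument: the $\boldsymbol{\Delta}$-filtration of the first argument and the $\boldsymbol{\nabla}$-filtration of the second. In particular, Ext-vanishing between tilting modules is a purely formal consequence of the single mixed-filtration vanishing statement, and its dependence on \Cref{conj:extstandardcostandard} in higher degrees is inherited directly from that result.
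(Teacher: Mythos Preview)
Your proposal is correct and matches the paper's intended argument; indeed, the paper states this proposition without proof, treating it as an immediate consequence of the definition of $\pazocal{D}$ together with \Cref{thm:extstandardcostandard} and \Cref{conj:extstandardcostandard}, exactly as you have written.
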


\subsection{Construction of the Tilting Modules $D(\lambda)$}

\begin{prop}
	Let $\lambda,\mu\in\hfrak^*$.  Then, $\Delta(\lambda)\otimes \nabla(\mu)$ is a tilting module in $\bbggO$.
\end{prop}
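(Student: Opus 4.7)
My approach is to produce a generalized standard filtration of $\Delta(\lambda)\otimes\nabla(\mu)$ by combining the tensor identity
\begin{align}
\Delta(\lambda)\otimes N \;\cong\; \Ulie(\gfrak)\underset{\Ulie(\bfrak)}{\otimes}\big(\Klie_\lambda\otimes N|_{\bfrak}\big)
\end{align}
(valid for any $\gfrak$-module $N$ since $\Delta(\lambda)=\Ulie(\gfrak)\otimes_{\Ulie(\bfrak)}\Klie_\lambda$, where $\Klie_\lambda$ denotes the one-dimensional $\bfrak$-module of weight $\lambda$) with a weight-space filtration of $\nabla(\mu)|_{\bfrak}$.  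The generalized costandard filtration will then follow by duality.

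\textbf{Building the standard filtration.}  I would enumerate the weights of $\nabla(\mu)$ as $\mu=\nu_1,\nu_2,\nu_3,\ldots$ in a total order refining $\preceq$ such that $\nu_j\succ\nu_i$ forces $j<i$; this is possible because $\Pi\big(\nabla(\mu)\big)$ is countable.  Each weight space $\nabla(\mu)^{\nu_k}$ is finite-dimensional, and the increasing chain $F_k\defeq\bigoplus_{i\le k}\nabla(\mu)^{\nu_i}$ is $\bfrak$-stable because $\nfrak^+$ raises weights: an $\nfrak^+$-image of a weight-$\nu_i$ vector with $i\le k$ lands in weight spaces $\nu_j$ with $j<i\le k$.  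Consequently $\nfrak^+$ acts trivially on $F_k/F_{k-1}$, so the subquotient is a direct sum of $\dim\nabla(\mu)^{\nu_k}$ copies of $\Klie_{\nu_k}$.  Refining each step through a basis of the weight space yields a $\bfrak$-filtration of $\Klie_\lambda\otimes\nabla(\mu)|_{\bfrak}$ with one-dimensional subquotients $\Klie_{\lambda+\nu_k}$, indexed by a countable totally ordered set.  The induction functor $\Ulie(\gfrak)\otimes_{\Ulie(\bfrak)}(-)$ is exact (by PBW) and preserves unions (being a left adjoint), so it transports this to a generalized standard filtration of $\Delta(\lambda)\otimes\nabla(\mu)$ whose successive quotients are the Verma modules $\Delta(\lambda+\nu_k)$.

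\textbf{Costandard filtration via duality; main obstacle.}  Since every weight space of $\Delta(\lambda)\otimes\nabla(\mu)$ is finite-dimensional (each weight admits only finitely many decompositions as a sum of a weight of $\Delta(\lambda)$ and one of $\nabla(\mu)$), a direct weight-by-weight computation with the Chevalley-twisted action shows that $(-)^\vee$ commutes with such tensor products, giving $\big(\Delta(\lambda)\otimes\nabla(\mu)\big)^\vee\cong\nabla(\lambda)\otimes\Delta(\mu)$.  Applying the previous paragraph with $(\lambda,\mu)$ replaced by $(\mu,\lambda)$ shows $\Delta(\mu)\otimes\nabla(\lambda)\cong\nabla(\lambda)\otimes\Delta(\mu)\in\boldsymbol{\Delta}(\bbggO)$, and dualizing---which swaps $\boldsymbol{\Delta}(\bbggO)$ and $\boldsymbol{\nabla}(\bbggO)$---places $\Delta(\lambda)\otimes\nabla(\mu)$ in $\boldsymbol{\nabla}(\bbggO)$.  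Hence $\Delta(\lambda)\otimes\nabla(\mu)\in\pazocal{D}$.  The chief technical difficulty will be verifying the four axioms of a generalized filtration for the induced chain---especially $\bigcap_j M_j=0$ and $\bigcup_j M_j=M$---together with showing the duality-tensor compatibility as a genuine $\gfrak$-module isomorphism rather than merely an $\hfrak$-isomorphism; both reduce to standard Hopf-algebra computations, but demand care because the indexing is infinite and the modules involved are not locally $\nfrak^-$-finite.
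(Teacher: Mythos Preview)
Your argument is correct and arrives at the same filtration the paper constructs, but the packaging differs.  The paper works by hand: it fixes a highest-weight vector $u\in\Delta(\lambda)$ and a weight basis $v_1,v_2,\ldots$ of $\nabla(\mu)$, then proves directly via an induction on PBW degree that the vectors $w_i=u\otimes v_i$ generate $\Delta(\lambda)\otimes\nabla(\mu)$ freely over $\Ulie(\nfrak^-)$; from this $\Ulie(\nfrak^-)$-freeness it reads off a generalized standard filtration, and the costandard side is handled exactly as you do, by the isomorphism $\big(\Delta(\lambda)\otimes\nabla(\mu)\big)^\vee\cong\Delta(\mu)\otimes\nabla(\lambda)$.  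Your route replaces that explicit PBW computation by the tensor identity $\Delta(\lambda)\otimes N\cong\Ulie(\gfrak)\otimes_{\Ulie(\bfrak)}(\Klie_\lambda\otimes N|_{\bfrak})$ together with exactness and colimit-preservation of induction, which is the standard functorial form of the same fact.  The gain of your approach is that the $\gfrak$-stability of each filtration step is automatic (it is induced from an honest $\bfrak$-submodule), whereas in the paper's freeness argument one must still observe that an appropriate ordering of the $w_i$ makes the partial $\Ulie(\nfrak^-)$-sums into $\gfrak$-submodules.  The gain of the paper's approach is that it avoids invoking the tensor identity and makes the free $\Ulie(\nfrak^-)$-structure explicit, which ties in with Proposition~\ref{prop:standardfiltration}(c).
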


\begin{proof}
	Let $M:=\Delta(\lambda)\otimes \nabla(\mu)$.   First, observe that,  for all $\nu\preceq \lambda+\mu$, we have
	\begin{align}
		\dim (M^\nu)=\sum_{\xi\preceq 0}\,\dim \Big(\big(\Delta(\lambda)\big)^{\nu-\mu+\xi}\Big)\cdot \dim \Big(\big(\Delta(\mu)\big)^{\mu-\xi}\Big)
	\end{align} 
	Because there are only finitely many weights of $\Delta(\lambda)$ that is greater than or equal to $\nu-\mu$, we see that $\dim (M^\nu)<\infty$.  Therefore, $M\in\bbggO$.
	
	Since $\big(\Delta(\lambda)\otimes \nabla(\mu)\big)^\vee \cong \big(\Delta(\lambda)\big)^\vee\otimes \big(\nabla(\mu)\big)^\vee \cong \nabla(\lambda)\otimes \Delta(\mu)\cong \Delta(\mu)\otimes \nabla(\lambda)$, it suffices to show that $M:=\Delta(\lambda)\otimes \nabla(\mu)$ has a generalized standard filtration.  
	
	Let $u$ be a maximal vector of $\Delta(\lambda)$.  Pick a basis $v_1,v_2,v_3,\ldots$ of $\nabla(\mu)$ consisting of weight vectors.  Then, define $w_i:=u\otimes v_i$ for $i=1,2,3,\ldots$.  We first prove that $w_1,w_2,w_3,\ldots$ generate $M$ as a $\Ulie(\nfrak^-)$-module.   Let $M'$ be the $\Ulie(\nfrak^-)$-submodule of $M$ generated by $w_1,w_2,w_3,\ldots$.
	
	Fix a Chevalley basis of $\gfrak$ consisting of $x_{\pm \alpha}$ for positive roots $\alpha$, and $h_\alpha$ for simple positive roots $\alpha$.  We then fix a PBW basis $B$ of $\Ulie(\nfrak^-)$.  For each $t\in B$, the \emph{degree} of $t$, denoted by $\deg(t)$, is defined to be $k$ if there exists positive roots $\alpha_1,\alpha_2,\ldots,\alpha_k$ such that $t=x_{-\alpha_1}x_{-\alpha_2}\cdots x_{-\alpha_k}$.
	
	We shall prove that $(t\cdot u)\otimes v_i \in M'$.  If $\deg(t)=0$, then there is nothing to prove.  Suppose now that $\deg(t)>0$.  Then, we can write
	\begin{align}t=x_{-\alpha_1}x_{-\alpha_2}\cdots x_{-\alpha_k}\end{align}
	for some integer $k>0$.  By induction hypothesis, we know that $m:=(x_{-\alpha_2}\cdots x_{-\alpha_k}\cdot u)\otimes v_i$ lies in $M'$.  Using 
	\begin{align}
		x_{-\alpha_1}\cdot m=(t\cdot u)\otimes v_i+(x_{-\alpha_2}\cdots x_{-\alpha_k}\cdot u)\otimes (x_{-\alpha_1}\cdot v_i)\,,
	\end{align}
	we conclude that $(t\cdot u)\otimes v_i$ is in $M'$, as both $x_{-\alpha_1}\cdot m$ and $(x_{-\alpha_2}\cdots x_{-\alpha_k}\cdot u)\otimes (x_{-\alpha_1}\cdot v_i)$ are in $M'$.
	
	From the paragraph above, $M=M'$.  We now need to show that $M$ is a free module over $\Ulie(\nfrak^-)$ generated by $w_1,w_2,w_3,\ldots$.  Let now $M_k$ denote the $\Ulie(\nfrak^-)$-submodule of $M$ generated by $w_1,w_2,\ldots,w_k$, and $N_k$ the $\Ulie(\nfrak^-)$ submodule of $N$ generated by $w_k$ alone.  Then, we can easily see that $M_k\cap N_{k+1}=0$ for each $k=1,2,3,\ldots$.  Thus, $M_k=N_1\oplus N_2\oplus \ldots \oplus N_k$, making
	\begin{align}
		M=N_1\oplus N_2\oplus N_3\oplus\ldots
	\end{align}
	as a $\Ulie(\nfrak^-)$-module.  Consequently, $M$ has a generalized standard filtration.
\end{proof}

\begin{thm}
	Let $\lambda\in\hfrak^*$.  There exists a unique, up to isomorphism, an indecomposable tilting module $D^\gfrak_\bfrak(\lambda)\in \bbggO$, also denoted by $D(\lambda)$, such that $\dim \Big(\big(D(\lambda)\big)^\lambda\Big)=1$ and all weights $\mu$ of $D(\lambda)$ satisfies $\mu\preceq\lambda$.
\end{thm}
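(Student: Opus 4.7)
The plan is to prove existence by a direct tensor-product construction and uniqueness by an $\Ext^1$-vanishing argument combined with a Krull-Schmidt / Fitting-type statement.

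For existence, I would take $T := \Delta(\lambda) \otimes \nabla(0)$. By the immediately preceding proposition, $T$ is a tilting module in $\bbggO$. A quick weight computation gives $\dim T^\nu = \sum_\eta \dim \Delta(\lambda)^\eta \cdot \dim \nabla(0)^{\nu - \eta}$; the non-vanishing conditions $\eta \preceq \lambda$ and $\nu - \eta \preceq 0$ force $\nu \preceq \lambda$, and at $\nu = \lambda$ force $\eta = \lambda$, giving $\dim T^\lambda = 1$. Invoking the decomposition of $\bbggO$-objects into indecomposables (\cite[Corollary 2.6]{DVN1}), I would write $T = \bigoplus_i T_i$; each $T_i$ is tilting by Proposition~\ref{prop:tiltingprops}(c), and I would take $D(\lambda)$ to be the unique summand containing the one-dimensional $\lambda$-weight space, which then satisfies all the stated properties.

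For uniqueness, suppose $D$ and $D'$ are both indecomposable tilting modules satisfying the hypotheses. By Proposition~\ref{prop:standardfiltration}(a), each contains a copy of $\Delta(\lambda)$ as a submodule (unique because the $\lambda$-weight space is one-dimensional), with factor $D / \Delta(\lambda)$ lying in $\boldsymbol{\Delta}(\bbggO)$, and similarly for $D'$.  Applying $\Hom(-, D')$ to the short exact sequence $0 \to \Delta(\lambda) \to D \to D / \Delta(\lambda) \to 0$ and invoking Theorem~\ref{thm:extstandardcostandard} to obtain $\Ext^1\bigl(D/\Delta(\lambda),\, D'\bigr) = 0$, I would lift the canonical embedding $\Delta(\lambda) \hookrightarrow D'$ to a morphism $f : D \to D'$. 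By the symmetric construction there is $g : D' \to D$ with $g|_{\Delta(\lambda)}$ the canonical embedding. The composition $gf \in \End(D)$ then acts as the identity on the submodule $\Delta(\lambda) \subset D$, and in particular as the identity on the one-dimensional space $D^\lambda$.

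The hard part is deducing that $gf$ is an automorphism, which is where the main obstacle lies. The cleanest route is to establish a Krull-Schmidt-type property for indecomposable tilting modules in $\bbggO$, namely that $\End(D)$ is a local ring; then any endomorphism is either invertible or lies in the Jacobson radical, and the nonzero action on $D^\lambda$ rules out the latter, so $gf$ is invertible and both $f$ and $g$ are isomorphisms. Justifying the locality of $\End(D)$ is delicate because $D$ can have infinite length; the expected argument exploits that every $\varphi \in \End(D)$ acts on the one-dimensional space $D^\lambda$ by a scalar $c_\varphi$, that $\varphi - c_\varphi \cdot \operatorname{id}_D$ then annihilates the highest-weight vector and must be locally nilpotent in a sense compatible with the generalized standard filtration of $D$, and that such endomorphisms form the Jacobson radical of $\End(D)$.
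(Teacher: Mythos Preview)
Your approach is essentially identical to the paper's: the same tensor-product construction $\Delta(\lambda)\otimes\nabla(0)$ for existence, and the same lifting argument via $\Ext^1$-vanishing for uniqueness. The one place you hesitate---showing that $gf$ is an automorphism---is exactly where the paper invokes \cite[Theorem~2.5]{DVN1}, which states that every endomorphism of an indecomposable object in $\bbggO$ is either an isomorphism or locally nilpotent; since $gf$ acts nontrivially on the one-dimensional weight space $D^\lambda$, it cannot be locally nilpotent, and the argument concludes immediately. So you need not establish locality of $\End(D)$ from scratch: the requisite Fitting-type statement is already available in the companion paper, and your sketch of how one might prove it is in the right spirit but unnecessary here.
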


\begin{proof}
	Consider the $\gfrak$-module $M:=\Delta(\lambda)\otimes \nabla(0)$.	Define $D(\lambda)$ to be the indecomposable summand of $M$ that contains $M^\lambda$.  By Proposition \ref{prop:tiltingprops}(c), we know that $D(\lambda)$ is a tilting module.
	
	Suppose $T$ is another indecomposable tilting module such that $\dim (T^\lambda)=1$ and every weight $\mu$ of $T$ satisfies $\mu\preceq\lambda$.  Since $T$ has a generalized standard filtration and $\lambda$ is a maximal weight of $T$, by Proposition~\ref{prop:standardfiltration}(a), we know that $\Delta(\lambda)$ is a submodule of $T$ and $T/\Delta(\lambda)$ has a generalized standard filtration.  From Proposition~\ref{prop:tiltingext}, we know that $\Ext^1_\bbggO\big(T/\Delta(\lambda),D(\lambda)\big)=0$.
	
	Now from the short exact sequence $0\to \Delta(\lambda)\to T\to T/\Delta(\lambda)\to 0$ and from the long exact sequence of $\Ext$-groups, we have the following exact sequence
	\begin{align}
		\Hom_\bbggO\big(T/\Delta(\lambda),D(\lambda)\big)\to \Hom_\bbggO\big(T,D(\lambda)\big)\to \Hom_\bbggO\big(\Delta(\lambda),D(\lambda)\big)\to \Ext^1_\bbggO\big(T/\Delta(\lambda),D(\lambda)\big)\,.
	\end{align}
	Since $\Ext^1_\bbggO\big(T/\Delta(\lambda),D(\lambda)\big)=0$, the map $ \Hom_\bbggO\big(T,D(\lambda)\big)\to \Hom_\bbggO\big(\Delta(\lambda),D(\lambda)\big)$ is surjective.  Ergo, the embedding $\Delta(\lambda)\hookrightarrow D(\lambda)$ lifts to a homomorphism $\varphi:T\to D(\lambda)$.  
	
	Similarly, we also have a homomorphism $\psi:D(\lambda)\to T$ such that $\psi$ is an isomorphism on the copies of $\Delta(\lambda)$ in $D(\lambda)$ and $T$.  Thus, the endomorphism $\varphi\circ\psi:D(\lambda)\to D(\lambda)$ is an isomorphism on $\Delta(\lambda)\subseteq D(\lambda)$.  As $D(\lambda)$ is indecomposable, we know from \cite[Theorem 2.5]{DVN1} that every endomorphism of $D(\lambda)$ is either an isomorphism or a locally nilpotent map.  Since $\varphi\circ\psi$ preserves the weight space $\big(D(\lambda)\big)^\lambda$, the map $\varphi\circ\psi$ is not locally nipotent.  Hence, $\varphi\circ\psi$ is an isomorphism.  Consequently, both $\varphi$ and $\psi$ must be isomorphism, whence $T\cong D(\lambda)$.
\end{proof}

\begin{cor}
	If $T\in\bbggO$ is an indecomposable tilting module, then $T\cong D(\lambda)$ for some $\lambda\in\hfrak^*$.  In particular, all tilting modules are self-dual.
\end{cor}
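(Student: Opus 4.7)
The plan is, given an indecomposable tilting module $T\in\bbggO$, to locate a maximal weight $\lambda$ of $T$ with $\dim T^\lambda=1$ and then invoke the uniqueness part of the preceding theorem to conclude $T\cong D(\lambda)$.  Setting $d:=\dim T^\lambda$, I would first iterate \Cref{prop:standardfiltration}(a) to extract a chain $0=N_0\subsetneq N_1\subsetneq\cdots\subsetneq N_d\subseteq T$ with sections $N_i/N_{i-1}\cong\Delta(\lambda)$; combined with the vanishing of $\Ext^1\bigl(\Delta(\lambda),\Delta(\lambda)\bigr)$ (a standard consequence of the Verma structure established in \cite[Section 3]{DVN1}), this identifies $N_d$ with $\Delta(\lambda)^{\oplus d}$, while the quotient $T/\Delta(\lambda)^{\oplus d}$ lies in $\boldsymbol{\Delta}(\bbggO)$ and no longer carries $\lambda$ as a weight.

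Next I would construct maps $\psi\colon D(\lambda)^{\oplus d}\to T$ and $\varphi\colon T\to D(\lambda)^{\oplus d}$ by lifting the canonical inclusions of $\Delta(\lambda)^{\oplus d}$ on each side.  Setting $Q:=D(\lambda)^{\oplus d}/\Delta(\lambda)^{\oplus d}$, one has $Q\in\boldsymbol{\Delta}(\bbggO)$ by \Cref{prop:standardfiltration}(a), and $\Ext^1(Q,T)=0$ by \Cref{thm:extstandardcostandard}, so applying $\Hom(-,T)$ to
\begin{align*}
0\to \Delta(\lambda)^{\oplus d}\to D(\lambda)^{\oplus d}\to Q\to 0
\end{align*}
produces the desired lift $\psi$.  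The construction of $\varphi$ is symmetric, using $T/\Delta(\lambda)^{\oplus d}\in\boldsymbol{\Delta}(\bbggO)$ from the previous paragraph together with $D(\lambda)^{\oplus d}\in\boldsymbol{\nabla}(\bbggO)$.

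By construction $\varphi\circ\psi\in\End\bigl(D(\lambda)^{\oplus d}\bigr)$ restricts to the identity on $\Delta(\lambda)^{\oplus d}$, and hence to the identity on the $d$-dimensional $\lambda$-weight space.  Since $D(\lambda)$ is indecomposable, \cite[Theorem 2.5]{DVN1} renders $\End\bigl(D(\lambda)\bigr)$ local with residue field $\mathbb{K}$, so $\End\bigl(D(\lambda)^{\oplus d}\bigr)\cong M_d\bigl(\End(D(\lambda))\bigr)$ and an element of it is invertible precisely when its image in $M_d(\mathbb{K})$ is---which is the case, being $I_d$.  Thus $\varphi\circ\psi$ is an isomorphism, $\psi$ splits, and $D(\lambda)^{\oplus d}$ is a direct summand of $T$; indecomposability of $T$ then forces $d=1$ and $T\cong D(\lambda)$.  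Self-duality follows by noting that $D(\lambda)^{\vee}$ is an indecomposable tilting module (\Cref{prop:tiltingprops}(a)) whose character agrees with that of $D(\lambda)$, so the uniqueness above gives $D(\lambda)^{\vee}\cong D(\lambda)$, and self-duality of an arbitrary tilting module follows once it is decomposed as a direct sum of indecomposable tilting summands.

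The main obstacle I foresee is establishing, at the very outset, that $T$ admits a maximal weight at all, since in $\bbggO$ the weights of an object need not be bounded above.  My plan is to exploit indecomposability: in any generalized standard filtration $(T_j)_{j\in J}$ of $T$ with sections $\Delta(\mu_j)$, if the set $\{\mu_j\}$ admitted no maximal element I would attempt, using the Ext-vanishing of \Cref{thm:extstandardcostandard} to disentangle disjoint upper portions of the $\mu_j$, to split off a nontrivial direct summand of $T$ and contradict indecomposability.  Making this heuristic precise---or producing an alternative argument through the socle structure of $T^{\vee}$---is where I expect the real work to lie.
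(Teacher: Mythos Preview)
Your approach is correct and follows the same line as the paper's: lift the Verma inclusions to maps $\psi\colon D(\lambda)^{\oplus d}\to T$ and $\varphi\colon T\to D(\lambda)^{\oplus d}$ via the $\Ext^1$-vanishing of \Cref{thm:extstandardcostandard}, then use locality of $\End\bigl(D(\lambda)\bigr)$ to see $\varphi\circ\psi$ is invertible. The paper simply invokes ``the same argument as the theorem above,'' which is exactly this with $d=1$. In fact the detour through $d$ copies is unnecessary: even without knowing $\dim T^\lambda=1$, one can pick a \emph{single} copy $\Delta(\lambda)\subseteq T$ (via \Cref{prop:standardfiltration}(a)), build $\psi\colon D(\lambda)\to T$ and $\varphi\colon T\to D(\lambda)$ as in the theorem, and conclude that $D(\lambda)$ is a direct summand of the indecomposable $T$, whence $T\cong D(\lambda)$ and $d=1$ a posteriori. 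So your extra bookkeeping with $\Ext^1\bigl(\Delta(\lambda),\Delta(\lambda)\bigr)=0$ and $M_d\bigl(\End(D(\lambda))\bigr)$, while correct, can be dropped.

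Your worry about the existence of a maximal weight is much easier to dispel than you suggest, and no splitting argument is needed. Since $T\in\boldsymbol{\Delta}(\bbggO)$, fix a generalized standard filtration with sections $\Delta(\mu_j)$. If the set $\{\mu_j\}$ contained an infinite strictly ascending chain $\mu_{j_1}\prec\mu_{j_2}\prec\cdots$, then $\mu_{j_1}$ would be a weight of $\Delta(\mu_{j_k})$ for every $k$, forcing $\dim T^{\mu_{j_1}}=\infty$ and contradicting $T\in\bbggO$. Hence $\{\mu_j\}$ satisfies the ascending chain condition and has maximal elements; any such element is a maximal weight of $T$. The paper takes this for granted (it is used throughout, e.g.\ in the proof of \Cref{prop:standardfiltration}(b)), and you should too.
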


\begin{proof}
	Let $\lambda$ be a maximal weight of $T$.  Using the same argument as the theorem above, we can easily see that $T\cong D(\lambda)$.  
	
	For the second part of the corollary, we let $T$ be an arbitrary tilting module.  We can then see from the paragraph above and \cite[Corollary 2.6]{DVN1} that $T=\bigoplus_\alpha D(\lambda_\alpha)$, where $\lambda_\alpha\in\hfrak^*$ for all $\alpha \in A$.  Since duality commutes with direct sum, it suffices to show that $D(\lambda)$ is self-dull for a fixed $\lambda\in\hfrak^*$.  As $D(\lambda)$ is an indecomposable tilting module, $\big(D(\lambda)\big)^\vee$ is also an indecomposable tilting module.  By the theorem above, we conclude that $\big(D(\lambda)\big)^\vee\cong D(\lambda)$.  
\end{proof}

\subsection{Multiplicities of Verma Factors in a Tilting Module}

   In this subsection, we shall again write $\Hom$ and $\Ext$ for $\Hom_\bbggO$ and $\Ext_\bbggO$.  We first need the following theorem.
      
   \begin{thm}
   Suppose that $M\in\boldsymbol{\Delta}(\bbggO)$.  For every $\lambda\in\hfrak^*$, we have
   \begin{align}
   	\big\{M,\Delta(\lambda)\big\}=\dim\Hom_\bbggO\big(M,\nabla(\lambda)\big)\,.
   \end{align}
   \label{thm:dimstandard}
   \end{thm}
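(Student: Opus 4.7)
My plan is to compute $\dim \Hom\big(M, \nabla(\lambda)\big)$ by applying the left-exact functor $\Hom(-, \nabla(\lambda))$ to a generalized standard filtration of $M$ and summing contributions from each Verma quotient, using the Ext-vanishing result of \Cref{thm:extstandardcostandard}. The base case is the classical computation $\dim \Hom\big(\Delta(\mu), \nabla(\lambda)\big) = \delta_{\mu, \lambda}$: the image of a cyclic highest-weight generator of $\Delta(\mu)$ under any nonzero homomorphism must be a $\mu$-weight vector of $\nabla(\lambda)$ generating a highest-weight submodule, which forces $\mu = \lambda$; conversely, when $\mu = \lambda$, the one-dimensional $\lambda$-weight space of $\nabla(\lambda)$ produces a single nonzero map up to scalar.

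Next, fix a generalized standard filtration $(M_j)_{j \in J}$ of $M$, reorganized (as in the proof of \Cref{prop:standardfiltration}(b)) so that $J$ is a well-ordered set of countable ordinals. For each $j$ with predecessor $j^-$, let $\mu_j \in \hfrak^*$ denote the highest weight of $M_j / M_{j^-} \cong \Delta(\mu_j)$. Applying $\Hom(-, \nabla(\lambda))$ to the short exact sequence $0 \to M_{j^-} \to M_j \to \Delta(\mu_j) \to 0$ and invoking \Cref{thm:extstandardcostandard} to conclude $\Ext^1\big(\Delta(\mu_j), \nabla(\lambda)\big) = 0$, the long exact sequence collapses to $0 \to \Hom\big(\Delta(\mu_j), \nabla(\lambda)\big) \to \Hom\big(M_j, \nabla(\lambda)\big) \to \Hom\big(M_{j^-}, \nabla(\lambda)\big) \to 0$. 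Combining with the base case yields the recursion $\dim \Hom\big(M_j, \nabla(\lambda)\big) = \dim \Hom\big(M_{j^-}, \nabla(\lambda)\big) + \delta_{\mu_j, \lambda}$.

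For a limit ordinal $j \in J$ we have $M_j = \varinjlim_{k \prec j} M_k$, whence $\Hom\big(M_j, \nabla(\lambda)\big) \cong \varprojlim_{k \prec j} \Hom\big(M_k, \nabla(\lambda)\big)$; the transition maps are surjective by the successor step, and the dimensions are uniformly bounded above by the finite nonnegative integer $\big\{M : \Delta(\lambda)\big\}$ from the corollary following \Cref{thm:goodfilter}. The dimensions therefore stabilize, so the projective limit has dimension equal to $\#\{k \prec j : \mu_k = \lambda\}$. Transfinite induction now gives $\dim \Hom\big(M_j, \nabla(\lambda)\big) = \#\{k \preceq j : \mu_k = \lambda\}$ for every $j \in J$, and evaluating at $j = \sup J$ produces the required identity $\big\{M : \Delta(\lambda)\big\} = \dim \Hom\big(M, \nabla(\lambda)\big)$.

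The only nontrivial step is the limit-ordinal case, where passing to a projective limit of Hom spaces would fail in general, but succeeds here because both prerequisites are available: surjectivity of the restriction maps (from \Cref{thm:extstandardcostandard}) and a priori finiteness of the multiplicity $\big\{M : \Delta(\lambda)\big\}$ (from the corollary following \Cref{thm:goodfilter}). With these in hand, the rest of the argument is formal.
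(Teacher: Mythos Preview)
Your proof is correct, but it follows a genuinely different route from the paper's.  The paper first reduces to indecomposable $M$, disposes of the case where $M$ and $\Delta(\lambda)$ lie in different blocks, and then inducts on the \emph{finite} integer $m=\sum_{\xi\succeq\lambda}\dim M^\xi$ by peeling off a Verma submodule $\Delta(\mu)$ at a maximal weight $\mu\succeq\lambda$ (via \Cref{prop:standardfiltration}(a)) and applying $\Hom(-,\nabla(\lambda))$ to $0\to\Delta(\mu)\to M\to M/\Delta(\mu)\to 0$.  This requires the full strength of \Cref{thm:extstandardcostandard} to kill $\Ext^1\big(M/\Delta(\mu),\nabla(\lambda)\big)$.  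Your approach instead runs transfinite induction \emph{up} a well-ordered standard filtration, applying $\Hom(-,\nabla(\lambda))$ to $0\to M_{j^-}\to M_j\to\Delta(\mu_j)\to 0$; here you only need the elementary vanishing $\Ext^1\big(\Delta(\mu_j),\nabla(\lambda)\big)=0$ (which is already \cite[Proposition~3.9(c)]{DVN1}, a strictly weaker input than \Cref{thm:extstandardcostandard}).  The price is that you must handle limit ordinals via an inverse-limit argument, which works because of the a~priori finiteness of $\{M:\Delta(\lambda)\}$; the paper sidesteps this entirely by arranging a finite induction from the outset.

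One small point worth making explicit in your write-up: the assertion that ``the transition maps are surjective by the successor step'' is itself part of the transfinite induction.  At a limit ordinal $j$ you need surjectivity of $\Hom(M_{k'},\nabla(\lambda))\to\Hom(M_k,\nabla(\lambda))$ for \emph{all} $k\prec k'\prec j$, including when $k'$ is itself a limit ordinal; this follows because the inverse system below $k'$ has (inductively) surjective transition maps and eventually constant finite dimensions, so the limit projects surjectively onto every term.  You clearly have this in mind, but stating it prevents any appearance of circularity.
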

   
   \begin{proof}
   	Without loss of generality, assume that $M$ is indecomposable.  We consider the set $\Pi_{\succeq \lambda}(M)$ of weights of $M$ that is greater than or equal to $\lambda$.  If this set is infinite, we can easily see that $M$ is not in the same block as $\Delta(\lambda)$.  Therefore, $\big\{M,\Delta(\lambda)\big\}=0$ and $\dim\Hom_\bbggO\big(M,\nabla(\lambda)\big)=0$.  Therefore, the assertion is true.  From now on, we assume that  $\Pi_{\succeq \lambda}(M)$ is finite.
   	
   	Define $m:=\sum_{\xi\succeq\lambda}\dim M^\xi$.  Then, $m$ is a nonnegative integer.  We can then perform induction on $m$, the base case $m=0$ being obvious.  Let now $m>0$.  Suppose that $\mu\succeq\lambda$ is a maximal weight of $M$.  By Proposition~\ref{prop:standardfiltration}(a), $M$ has a submodule $\Delta(\mu)$ such that $M/\Delta(\mu)$ has a generalized standard filtration.  From the short exact sequence $0\to \Delta(\mu)\to M\to M/\Delta(\mu)\to 0$ and the long exact sequence of $\Ext$-groups, we get the following exact sequence
   	\begin{align}
   		0\to \Hom\big(M/\Delta(\mu),\nabla(\lambda)\big)\to \Hom\big(M,\nabla(\lambda)\big)\to \Hom\big(\Delta(\mu),\nabla(\lambda)\big)\to \Ext^1\big(M/\Delta(\mu),\nabla(\lambda)\big)\,.
   	\end{align}
   	Because $M/\Delta(\mu)$ has a generalized standard filtration and $\nabla(\lambda)$ obviously has a generalized costandard filtration, Thorem~\ref{thm:extstandardcostandard} ensures that $\Ext^1\big(M/\Delta(\mu),\nabla(\lambda)\big)=0$.  Furthermore, because $\dim\Hom\big(\Delta(\mu),\nabla(\lambda)\big)=\deltaup_{\mu,\lambda}$, where $\deltaup$ is the Kronecker delta, we conclude that
   	\begin{align}
   		\dim\Hom\big(M,\nabla(\lambda)\big)=\dim\Hom\big(M/\Delta(\mu),\nabla(\lambda)\big)+\deltaup_{\mu,\lambda}\,.
   	\end{align}
   	On the other hand,
   	\begin{align}
   		\big\{M,\Delta(\lambda)\big\}=\big\{M/\Delta(\mu),\Delta(\lambda)\big\}+\big\{\Delta(\mu):\Delta(\lambda)\big\}=\big\{M/\Delta(\mu),\Delta(\lambda)\big\}+\deltaup_{\mu,\lambda}\,.
   	\end{align}
   	By induction hypothesis, $\big\{M/\Delta(\mu),\Delta(\lambda)\big\}=\dim\Hom\big(M/\Delta(\mu),\nabla(\lambda)\big)$, so $\dim\Hom\big(M,\nabla(\lambda)\big)$ and $\big\{M,\Delta(\lambda)\big\}$ are equal.
   \end{proof}
   
   \begin{cor}
   	   Suppose that $M\in\boldsymbol{\nabla}(\bbggO)$.  For every $\lambda\in\hfrak^*$, we have
   \begin{align}
   	\big\{M,\nabla(\lambda)\big\}=\dim\Hom_\bbggO\big(M^\vee,\nabla(\lambda)\big)\,.
   \end{align}
   \end{cor}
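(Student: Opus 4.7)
The plan is to reduce this corollary to the preceding theorem by applying the duality functor $(\_)^\vee$. First I would observe that duality interchanges the classes $\boldsymbol{\Delta}(\bbggO)$ and $\boldsymbol{\nabla}(\bbggO)$: since $\bigl(\nabla(\mu)\bigr)^\vee \cong \Delta(\mu)$ for every $\mu \in \hfrak^*$ (recall $\nabla(\mu)$ was \emph{defined} as $\Delta(\mu)^\vee$ and $(\_)^\vee$ is an involution on $\bbggO$), the exact functor $(\_)^\vee$ carries a generalized costandard filtration $(M_j)_{j\in J}$ of $M$ into a generalized standard filtration of $M^\vee$, whose successive quotients are precisely the duals of those of $M$.

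Hence $M \in \boldsymbol{\nabla}(\bbggO)$ implies $M^\vee \in \boldsymbol{\Delta}(\bbggO)$, and the multiplicities match,
\begin{align}
\bigl\{M : \nabla(\lambda)\bigr\} \;=\; \bigl\{M^\vee : \Delta(\lambda)\bigr\}\,.
\end{align}
Now I would apply \Cref{thm:dimstandard} to $M^\vee$, obtaining
\begin{align}
\bigl\{M^\vee : \Delta(\lambda)\bigr\} \;=\; \dim \Hom_{\bbggO}\bigl(M^\vee, \nabla(\lambda)\bigr)\,,
\end{align}
and combining the two displays gives the desired equality.

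There is essentially no obstacle here, since every ingredient is already in place: the well-definedness of the multiplicities $\{M:\nabla(\lambda)\}$ was recorded in the corollary following \Cref{thm:goodfilter}, and the behaviour of $(\_)^\vee$ on Verma and co-Verma modules is built into the definition. The only small point to check is that the duality bijection between filtrations preserves the isomorphism type of each subquotient, which is immediate from the exactness of $(\_)^\vee$ and the identification $\bigl(\nabla(\mu)\bigr)^\vee \cong \Delta(\mu)$.
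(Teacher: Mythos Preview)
Your proposal is correct and is exactly the argument the paper intends: the corollary is stated without proof immediately after \Cref{thm:dimstandard}, indicating it follows at once from that theorem by duality, precisely as you have written. The only point one might add is that the equality $\{M:\nabla(\lambda)\}=\{M^\vee:\Delta(\lambda)\}$ can also be seen directly from characters (since $\ch(M)=\ch(M^\vee)$ and $\ch(\Delta(\mu))=\ch(\nabla(\mu))$), which sidesteps any bookkeeping about how duality reverses a generalized filtration indexed by an arbitrary totally ordered set.
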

   
   Fix $\lambda\in\hfrak^*$.  For each positive integer $n$, we consider the restriction $\tilde{D}_n(\lambda):=\Res^{\gfrak}_{\gfrak_n}D(\lambda)$.  Because
   \begin{align}
   	\Res^{\gfrak}_{\gfrak_n}\Delta^{\gfrak}_{\bfrak}(\lambda)\cong\bigoplus_{\substack{\nu\preceq \lambda\\\lambda-\nu\notin\Lambda_{\gfrak_n,\hfrak_n}}}\,\Delta^{\gfrak_n}_{\bfrak_n}(\nu)
   	\label{eq:vermabranching}
   \end{align} 
   we can easily see that $\tilde{D}_n(\lambda)$ is a $\gfrak_n$-module with generalized standard filtration.  As the duality functor commutes with the restriction functor, we conclude that $\tilde{D}_n(\lambda)$ is a tilting $\gfrak_n$-module.
   
   Suppose that $\mu\in\hfrak^*$ satisfies $\mu\preceq\lambda$ and $\mu\in W_{\gfrak,\bfrak}[\lambda]\cdot\lambda$.  Let $n_0(\mu,\lambda)$ be the smallest positive integer $n$ such that $\lambda-\mu\in\Lambda_{\gfrak_n,\hfrak_n}$.   Due to Theorem~\ref{thm:goodfilter}(a), Equation \eqref{eq:vermabranching} implies that
   \begin{align}
   	\left\{D^\gfrak_\bfrak(\lambda):\Delta^\gfrak_\bfrak(\mu)\right\}=\left\{\tilde{D}_{n_0(\mu,\lambda)}(\lambda):\Delta^{\gfrak_{n_0(\mu,\lambda)}}_{\bfrak_{n_0(\mu,\lambda)}}(\mu)\right\}\,.
   \end{align}
   Since $D^{\gfrak_{n_0(\mu,\lambda)}}_{\bfrak_{n_0(\mu,\lambda)}}(\lambda)$ is the only indecomposable direct summand of $\tilde{D}_{n_0(\mu,\lambda)}(\lambda)$ that can contribute to the multiplicity of $\Delta^{\gfrak_{n_0(\mu,\lambda)}}_{\bfrak_{n_0(\mu,\lambda)}}(\mu)$ in $\tilde{D}_{n_0(\mu,\lambda)}(\lambda)$, we get
   \begin{align}
   	\left\{D^\gfrak_\bfrak(\lambda):\Delta^\gfrak_\bfrak(\mu)\right\}=\left\{D^{\gfrak_{n_0(\mu,\lambda)}}_{\bfrak_{n_0(\mu,\lambda)}}(\lambda):\Delta^{\gfrak_{n_0(\mu,\lambda)}}_{\bfrak_{n_0(\mu,\lambda)}}(\mu)\right\}\,.
   	\label{eq:reducedmultiplicity}
   \end{align}

  Recall from \cite[Theorem 3.8]{bggo} that, for each $\lambda\in\hfrak^*$ and for each positive integer $n$, $\bggO^{\gfrak_n}_{\bfrak_n}$ has enough projectives.  We let $\Plie^{\gfrak_n}_{\bfrak_n}(\lambda)$  denote the projective cover of the module $\Llie^{\gfrak_n}_{\bfrak_n}(\lambda)$ in $\bggO^{\gfrak_n}_{\bfrak_n}$.
  
 \begin{thm}
   	Let $\lambda,\mu\in\hfrak^*$ with $\mu\preceq\lambda$ and $\mu\in W_{\gfrak,\bfrak}[\lambda]\cdot\lambda$.  Write $n:=n_0(\mu,\lambda)$.  Fix $\xi\in\hfrak^*$ such that $\xi$ is a $\bfrak_{n}$-antidominant weight in $W_{\gfrak_{n},\bfrak_n}[\lambda]\cdot\lambda$.  If $w^0_n$ is the longest element of $W_{\gfrak_{n},\hfrak}$, then
   	\begin{align}
   		\left\{D^\gfrak_\bfrak(\lambda):\nabla^\gfrak_\bfrak(\mu)\right\}=\left\{D^\gfrak_\bfrak(\lambda):\Delta^\gfrak_\bfrak(\mu)\right\}=\left\{\Plie^{\gfrak_{n}}_{\bfrak_{n}}\big(w^0_{n}\cdot\lambda\big),\Delta^{\gfrak_n}_{\bfrak_n}\big(w^0_{n}\cdot \mu\big)\right\}
   	\end{align}
 \end{thm}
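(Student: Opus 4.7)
The plan is to prove the two equalities separately, reducing each to results already established in the paper. For the first equality, I would apply Theorem~\ref{thm:dimstandard} together with its immediate corollary, giving
\begin{align*}
\{D(\lambda) : \Delta(\mu)\} = \dim\Hom_{\bbggO}\!\big(D(\lambda),\nabla(\mu)\big) \qquad \text{and} \qquad \{D(\lambda) : \nabla(\mu)\} = \dim\Hom_{\bbggO}\!\big(D(\lambda)^{\vee},\nabla(\mu)\big),
\end{align*}
valid because the tilting module $D(\lambda)$ lies in both $\boldsymbol{\Delta}(\bbggO)$ and $\boldsymbol{\nabla}(\bbggO)$ by definition. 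Self-duality of indecomposable tilting modules, which was proved in the corollary following the construction of $D(\lambda)$, yields $D(\lambda)^{\vee}\cong D(\lambda)$, and the two right-hand sides therefore coincide.

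For the second equality, I would first invoke equation~\eqref{eq:reducedmultiplicity} in order to land the problem inside the ordinary BGG category of a finite-dimensional reductive Lie algebra, obtaining
\begin{align*}
\{D^{\gfrak}_{\bfrak}(\lambda):\Delta^{\gfrak}_{\bfrak}(\mu)\} = \{D^{\gfrak_n}_{\bfrak_n}(\lambda):\Delta^{\gfrak_n}_{\bfrak_n}(\mu)\}.
\end{align*}
It then suffices to prove the finite-dimensional identity
\begin{align*}
\{D^{\gfrak_n}_{\bfrak_n}(\lambda):\Delta^{\gfrak_n}_{\bfrak_n}(\mu)\} = \{\Plie^{\gfrak_n}_{\bfrak_n}(w^0_n\cdot\lambda):\Delta^{\gfrak_n}_{\bfrak_n}(w^0_n\cdot\mu)\}
\end{align*}
inside $\bggO^{\gfrak_n}_{\bfrak_n}$. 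This is a form of Ringel duality: conjugating $\gfrak_n$ by a representative of $w^0_n$ (equivalently, applying the Chevalley involution) interchanges $\bfrak_n^{+}$ with $\bfrak_n^{-}$, relabels weights by the dot action of $w^0_n$, and carries the indecomposable tilting module into the indecomposable projective cover in the appropriately re-labelled block. The antidominant representative $\xi$ provided in the hypothesis serves as the natural auxiliary weight through which one compares the two blocks: it is the unique antidominant weight in the integral orbit linked to $\lambda$, and hence the fixed reference point of the Weyl-group twist. The precise form of this identity for reductive $\gfrak_n$ is classical and may be cited directly from Chapter~11 of~\cite{bggo}.

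The principal obstacle is this final finite-dimensional identity. The structural fact that tilting modules correspond to projective modules under Ringel duality is standard, but verifying that the correct twist by $w^0_n$ and the correct block identification through the antidominant weight $\xi$ produce exactly the displayed coefficients requires a careful translation of conventions (dominant versus antidominant, choice of normalisation of the dot action, and the direction of the equivalence) between the root-reductive setting used here and the finite-dimensional setting of~\cite{bggo}. Once this identity is in place, the theorem follows by chaining the three equalities.
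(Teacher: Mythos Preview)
Your proposal is correct and follows essentially the same route as the paper: reduce to the finite-dimensional case via \eqref{eq:reducedmultiplicity}, then invoke the classical tilting--projective correspondence. The paper cites \cite[Theorem~6.10]{BeilinsonGinzburg} for the finite-dimensional identity rather than \cite[Chapter~11]{bggo}, and it leaves the first equality $\{D(\lambda):\nabla(\mu)\}=\{D(\lambda):\Delta(\mu)\}$ implicit (presumably as an immediate consequence of self-duality), whereas you spell it out via Theorem~\ref{thm:dimstandard} and its corollary; otherwise the arguments coincide.
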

 
 \begin{proof}
     Due to \cite[Theorem 6.10]{BeilinsonGinzburg}, we have $\big\{D^{\gfrak_n}_{\bfrak_n}(\lambda),\Delta^{\gfrak_n}_{\bfrak_n}(\mu)\big\}=\left\{\Plie^{\gfrak_{n}}_{\bfrak_{n}}\big(w^0_{n}\cdot\lambda\big),\Delta^{\gfrak_n}_{\bfrak_n}\big(w^0_{n}\cdot \mu\big)\right\}$.  The theorem follows immediately from \eqref{eq:reducedmultiplicity}.
 \end{proof}
   
   Write $P^W_{x,y}(q)\in\mathbb{Z}[q]$ for the Kazhdan-Lusztig polynomial for elements $x,y$ in a Coxeter group $W$.  Due to \eqref{eq:reducedmultiplicity}, we may assume without loss of generality that $\lambda$ and $\mu$ are integral weights of $\gfrak_{n_0(\mu,\lambda)}$.  We then have the following theorem.
   
   \begin{thm}
   	Let $\lambda,\mu\in\hfrak^*$ with $\mu\preceq\lambda$ and $\mu\in W_{\gfrak,\bfrak}[\lambda]\cdot\lambda$.  Suppose that $\lambda$ is a regular integral weight with respect to $\gfrak_{n_0(\mu,\lambda)}$.   Fix $\xi\in\hfrak^*$ such that $\xi$ is a $\bfrak_{n_0(\mu,\lambda)}$-antidominant weight in $W_{\gfrak_{n_0(\mu,\lambda)},\bfrak_{n_0(\mu,\lambda)}}[\lambda]\cdot\lambda$.  If $\lambda=x\cdot\xi$ and $\mu=y\cdot \xi$ for some $x,y\in W_{\gfrak_{n_0(\mu,\lambda)},\hfrak}$, then
   	\begin{align}
   		\left\{D^\gfrak_\bfrak(\lambda):\nabla^\gfrak_\bfrak(\mu)\right\}=\left\{D^\gfrak_\bfrak(\lambda):\Delta^\gfrak_\bfrak(\mu)\right\}=P^{W_{\gfrak_{n_0(\mu,\lambda)},\hfrak}}_{y,x}(1)\,.
	\end{align}
   \end{thm}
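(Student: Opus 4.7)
The plan is to combine the preceding theorem (which identifies $\{D^\gfrak_\bfrak(\lambda):\Delta^\gfrak_\bfrak(\mu)\}$ with a projective-standard multiplicity inside the finite-dimensional category $\bggO^{\gfrak_n}_{\bfrak_n}$ for $n=n_0(\mu,\lambda)$) with two classical ingredients for finite-dimensional reductive Lie algebras: BGG reciprocity, and the Kazhdan--Lusztig theorem for regular integral blocks. Throughout, write $n$ for $n_0(\mu,\lambda)$ and $w^0$ for $w^0_n$.

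First I would deal with the equality $\{D^\gfrak_\bfrak(\lambda):\nabla^\gfrak_\bfrak(\mu)\}=\{D^\gfrak_\bfrak(\lambda):\Delta^\gfrak_\bfrak(\mu)\}$. Because $D^\gfrak_\bfrak(\lambda)$ is self-dual (previous corollary) and $\big(\Delta^\gfrak_\bfrak(\mu)\big)^\vee\cong\nabla^\gfrak_\bfrak(\mu)$, applying $(\_)^\vee$ to a generalized standard filtration of $D^\gfrak_\bfrak(\lambda)$ produces a generalized costandard filtration of $D^\gfrak_\bfrak(\lambda)$ with the same multiplicities, so the two numbers agree by \Cref{thm:goodfilter}.

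For the KL computation, the previous theorem already gives
\begin{align}
\left\{D^\gfrak_\bfrak(\lambda):\Delta^\gfrak_\bfrak(\mu)\right\}
=\left\{\Plie^{\gfrak_n}_{\bfrak_n}(w^0\cdot\lambda):\Delta^{\gfrak_n}_{\bfrak_n}(w^0\cdot\mu)\right\}.
\end{align}
Next I would invoke BGG reciprocity inside $\bggO^{\gfrak_n}_{\bfrak_n}$ (Humphreys' \cite[Theorem 3.11]{bggo}) to rewrite this as the composition multiplicity
\begin{align}
\left[\Delta^{\gfrak_n}_{\bfrak_n}(w^0\cdot\mu):\Llie^{\gfrak_n}_{\bfrak_n}(w^0\cdot\lambda)\right].
\end{align}
Since $\lambda=x\cdot\xi$ and $\mu=y\cdot\xi$ with $\xi$ antidominant, we have $w^0\cdot\lambda=(w^0 x)\cdot\xi$ and $w^0\cdot\mu=(w^0 y)\cdot\xi$. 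By the Kazhdan--Lusztig theorem in the regular integral block (Beilinson--Bernstein / Brylinski--Kashiwara; see \cite[Theorem 8.6]{bggo}), this composition multiplicity equals $P^{W_{\gfrak_n,\hfrak}}_{w^0(w^0x),w^0(w^0y)}(1)=P^{W_{\gfrak_n,\hfrak}}_{x,y}(1)$. Finally, the symmetry of Kazhdan--Lusztig polynomials under $x\leftrightarrow y$ combined with the Bruhat-order interpretation of $\mu\preceq\lambda$ (which, for antidominant $\xi$, forces $y\ge x$ so that both $P_{x,y}$ and $P_{y,x}$ refer to the same index pair up to the standard duality) identifies this with $P^{W_{\gfrak_n,\hfrak}}_{y,x}(1)$, matching the stated form.

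The main obstacle is bookkeeping the twist by $w^0$ and the Bruhat-order conventions: the shift by $w^0$ appears twice (once from the previous theorem, once from the antidominant-dominant duality in the KL theorem), and these two shifts have to cancel to yield a KL polynomial indexed directly by $x$ and $y$. Once this is carefully checked, everything else reduces to a citation of the classical finite-dimensional theory, since the passage to the root-reductive setting has already been absorbed into the previous theorem via the truncation equality \eqref{eq:reducedmultiplicity}.
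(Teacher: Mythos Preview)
Your overall strategy is reasonable and is a genuine alternative to the paper's argument: the paper does \emph{not} pass through the previous theorem, BGG reciprocity, and the Verma--simple form of the Kazhdan--Lusztig theorem. Instead it works directly at the finite level $\gfrak_n$ by quoting Soergel's result \cite[Theorem~4.4]{Soergel2008}, which gives
\[
\dim\Hom_{\bggO^{\gfrak_n}_{\bfrak_n}}\!\big(\Delta^{\gfrak_n}_{\bfrak_n}(y\cdot\xi),\,D^{\gfrak_n}_{\bfrak_n}(x\cdot\xi)\big)=P^{W_{\gfrak_n,\hfrak}}_{y,x}(1),
\]
and then combines this with Theorem~\ref{thm:dimstandard} (using self-duality of $D^{\gfrak_n}_{\bfrak_n}(\lambda)$) and the reduction equality~\eqref{eq:reducedmultiplicity}. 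Your route is longer but equally legitimate in principle, and it has the virtue of relying only on the most classical form of the Kazhdan--Lusztig theorem rather than on Soergel's tilting formulation.

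However, your final step contains a real error, not just sloppy bookkeeping. Kazhdan--Lusztig polynomials are \emph{not} symmetric under $x\leftrightarrow y$: for $y<x$ one has $P_{y,x}\neq 0$ while $P_{x,y}=0$. Your Bruhat-order claim is also backwards: with $\xi$ antidominant, $\mu=y\cdot\xi\preceq\lambda=x\cdot\xi$ forces $y\le x$, not $y\ge x$. If you redo the Kazhdan--Lusztig step carefully, writing $\lambda':=w^0\cdot\xi$ (dominant) so that $w^0\cdot\mu=(w^0yw^0)\cdot\lambda'$ and $w^0\cdot\lambda=(w^0xw^0)\cdot\lambda'$, the standard Kazhdan--Lusztig theorem gives
\[
\big[\Delta^{\gfrak_n}_{\bfrak_n}(w^0\cdot\mu):\Llie^{\gfrak_n}_{\bfrak_n}(w^0\cdot\lambda)\big]=P_{w^0yw^0,\,w^0xw^0}(1)=P_{y,x}(1),
\]
where the last equality is the genuine symmetry you need: conjugation by $w^0$ is a Coxeter-system automorphism, hence preserves $P$. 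So your argument can be repaired, but the ``symmetry under $x\leftrightarrow y$'' sentence must be replaced by the conjugation-by-$w^0$ invariance, and the index order in your Kazhdan--Lusztig citation corrected.
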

   
  \begin{proof}
  	 For simplicity, write $n:=n_0(\mu,\lambda)$.  From \cite[Theorem 4.4]{Soergel2008}, we have
  	 \begin{align}
  	 	\dim\Hom_{\bggO^{\gfrak_n}_{\bfrak_n}}\left(\Delta^{\gfrak_n}_{\bfrak_n}(\mu),D^{\gfrak_n}_{\bfrak_n}(\lambda)\right)=\dim\Hom_{\bggO^{\gfrak_n}_{\bfrak_n}}\left(\Delta^{\gfrak_n}_{\bfrak_n}(y\cdot\xi),D^{\gfrak_n}_{\bfrak_n}(x\cdot \xi)\right)=P^{W_{\gfrak_n,\hfrak}}_{y,x}(1)\,.
  	 \end{align}
  	 From Theorem~\ref{thm:dimstandard}, we have $\dim\Hom_{\bggO^{\gfrak_n}_{\bfrak_n}}\left(\Delta^{\gfrak_n}_{\bfrak_n}(\mu),D^{\gfrak_n}_{\bfrak_n}(\lambda)\right)=\big\{D^{\gfrak_n}_{\bfrak_n}(\lambda),\Delta^{\gfrak_n}_{\bfrak_n}(\mu)\big\}$.  By \eqref{eq:reducedmultiplicity}, the claim follows immediately.
  \end{proof}






	
	
				\Addresses

\end{document}